\numberwithin{equation}{section}
\def\phi{\varphi}
\def\R{\mathbb R}
\def\N{\mathbb N}
\newcommand{\ch}{\mathcal{H}}
\newcommand{\cm}{\mathcal{M}}
\newcommand{\bfM}{\mathbf{M}}
\newcommand{\bfH}{\mathbf{H}}
\newcommand{\eps}{\varepsilon}
\newcommand{\ra}{\rightarrow}
\newcommand{\be}{\begin{equation}}
\newcommand{\ee}{\end{equation}}
\newcommand{\res}{\ensuremath{\,\textsf{\small \upshape L}\,}}
\newcommand{\spt}{{\rm spt}\,}
\newtheorem{theorem}{Theorem}[section]
\newtheorem{lemma}[theorem]{Lemma}
\newtheorem{corollary}[theorem]{Corollary}
\newtheorem{proposition}[theorem]{Proposition}
\theoremstyle{definition}
\theoremstyle{remark}
\newtheorem{remark}[theorem]{Remark}
\numberwithin{equation}{section}
\def\uclhome{@ucl.ac.uk}
\begin{document}

\title[Optimal isoperimetric inequalities in Cartan-Hadamard manifolds]{Optimal isoperimetric inequalities for surfaces in any codimension in Cartan-Hadamard manifolds}
\author{Felix Schulze}
\address{Felix Schulze: 
  Department of Mathematics, University College London, 25 Gordon St,
  London WC1E 6BT, UK}
\curraddr{}
\email{f.schulze\uclhome}

\subjclass[2000]{}

\dedicatory{}

\keywords{}

\begin{abstract}  Let $(M^n,g)$ be simply connected, complete, with non-positive sectional curvatures, and $\Sigma$ a 2-dimensional closed integral current (or flat chain mod 2) with compact support in $M$. Let $S$ be an area minimising integral 3-current (resp.~flat chain mod 2) such that $\partial S = \Sigma$. We use a weak mean curvature flow, obtained via elliptic regularisation, starting from $\Sigma$, to show that S satisfies the optimal Euclidean isoperimetric inequality: $ 6 \sqrt{\pi}\, \mathbf{M}[S] \leq (\mathbf{M}[\Sigma])^{3/2} $. We also obtain an optimal estimate in case the sectional curvatures of M are bounded from above by $-\kappa < 0$ and characterise the case of equality. The proof follows from an almost monotonicity of a suitable isoperimetric difference along the approximating flows in one dimension higher and an optimal estimate for the Willmore energy of a 2-dimensional integral varifold with first variation summable in $L^2$.

\end{abstract}

\maketitle

\section{Introduction}
The classic Euclidean isoperimetric inequality states that for any bounded open set $\Omega \subset \R^{n+1}$ with sufficiently regular boundary it holds that
\begin{equation} \label{eq:iso.eucl}
|\Omega| \leq \gamma(n+1) |\partial \Omega|^\frac{n+1}{n}
 \end{equation}
where $|\Omega|$ is the Lebesgue measure of $\Omega$, $|\partial \Omega|$ is a suitable notion of measure of the boundary and $\gamma(n+1) = (n+1)^{-(n+1)/n} \omega_{n+1}^{-1/n}$, where $\omega_{n+1}$ is the measure of the Euclidean unit $(n+1)$-ball. Furthermore, equality is attained if and only if $\Omega$ is a ball.

It is a natural question if there is a corresponding statement in higher codimension. This  was answered by Almgren in \cite{Almgren86}, which can be losely stated as follows.

\begin{theorem}[Almgren] \label{thm:almgren} Corresponding to each $m$-dimensional closed surface $T$ in $\R^{n+1}$ there is an $(m+1)$-dimensional surface $Q$ having $T$ as boundary such that 
\begin{equation}\label{eq:isoopt}
|Q| \leq \gamma(m+1) |T|^\frac{m+1}{m}
\end{equation}
with equality if and only if $T$ is a standard round $m$ sphere (of some radius) and $Q$ is the corresponding flat $m+1$ disk.
 \end{theorem}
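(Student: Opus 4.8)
The plan is to reduce \eqref{eq:isoopt}, by a variational argument, to an optimal curvature bound on the bounding surface, and to read off the equality case from the rigidity in that bound. For the filling I always take $Q$ to be an area-minimising integral current (resp.\ flat chain mod $2$) with $\partial Q=T$, which exists by the Federer--Fleming compactness theorem; it then suffices to bound $\bfM(Q)$. Since $\bfM(Q)/\bfM(T)^{(m+1)/m}$ is scale invariant, I fix $\bfM(T)=1$ and consider the supremum of this ratio over all closed $m$-dimensional $T$. A non-sharp isoperimetric inequality (e.g.\ via the cone over a suitably centred point) makes the supremum finite, and a routine concentration--compactness argument — disconnecting strictly lowers the ratio (splitting into $k$ pieces costs a factor $k^{-1/m}$), and a near-optimal configuration cannot spread out relative to its mass — produces an optimal pair $(T_*,Q_*)$. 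The unit round $m$-sphere together with the flat $(m+1)$-disk it bounds (area-minimising by the constant-coefficient volume-form calibration of its spanning plane) attains the ratio $\gamma(m+1)$, so the supremum is $\ge\gamma(m+1)$; the content is the reverse inequality and the identification of the extremals.

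Next I exploit that $(T_*,Q_*)$ is a critical pair. In the interior $Q_*$ is area-minimising, hence a stationary (minimal) current off $\spt T_*$, so $\vec H_{Q_*}=0$ there (and $Q_*$ is smooth away from a lower-dimensional singular set). Writing $\eta$ for the outward unit conormal of $Q_*$ along $T_*$, the first variation of the minimal-filling mass along $(\mathrm{id}+sX)_{\#}$ equals $\int_{T_*}\langle X,\eta\rangle\,d\mathcal H^m$ (its interior term vanishing since $Q_*$ is stationary, as seen by using $(\mathrm{id}\pm sX)_{\#}Q_*$ as competitors), while that of $\bfM(T_*)$ equals $\int_{T_*}\operatorname{div}_{T_*}X\,d\mathcal H^m=-\int_{T_*}\langle X,\vec H_{T_*}\rangle\,d\mathcal H^m$ since $T_*$ is closed. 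Criticality of the scale-invariant ratio then forces $\vec H_{T_*}$ to equal, $\mathcal H^m$-a.e., a fixed negative multiple $\lambda$ of $\eta$; as $|\eta|\equiv1$ this gives $|\vec H_{T_*}|\equiv|\lambda|$ and $\bfM(Q_*)=\tfrac{m}{(m+1)|\lambda|}\bfM(T_*)$. Using $\gamma(m+1)=(m+1)^{-(m+1)/m}\omega_{m+1}^{-1/m}$ and $\mathcal H^m(\mathbb S^m)=(m+1)\omega_{m+1}$, the target inequality $\bfM(Q_*)\le\gamma(m+1)\bfM(T_*)^{(m+1)/m}$ becomes \emph{equivalent} to the sharp Willmore-type bound
\[
\int_{T_*}|\vec H_{T_*}|^m\,d\mathcal H^m \;=\; |\lambda|^m\,\bfM(T_*)\;\ge\; m^m\,\mathcal H^m(\mathbb S^m).
\]

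This last bound is the heart of the matter and, I expect, the main obstacle. For $m=1$ it is Fenchel's inequality $\int_T\kappa\,ds\ge2\pi$; for $m=2$ it is the Li--Yau/Willmore inequality $\int_T|\vec H|^2\,d\mathcal H^2\ge16\pi$ — equivalently $6\sqrt{\pi}\,\bfM(Q)\le\bfM(T)^{3/2}$, precisely the two-dimensional estimate the present paper re-establishes for integral varifolds with first variation in $L^2$ — and for general $m$ it is the optimal $L^m$ Willmore-type inequality for closed $m$-surfaces (which is ultimately what Almgren's argument amounts to). I would establish it and its rigidity from the monotonicity formula: for $m=2$ by the classical Li--Yau argument, where the monotone quantity built from $r^{-2}\mathcal H^2(T_*\cap B_r(p))$ and the Willmore density equals $\Theta_{T_*}(p)\ge1$ as $r\to0^{+}$ and $(16\pi)^{-1}\int_{T_*}|\vec H_{T_*}|^2$ as $r\to\infty$ (the $r$-dependent terms dropping out because $T_*$ is closed), with equality forcing $T_*$ to be a round sphere; combining this with the constancy $|\vec H_{T_*}|\equiv|\lambda|$ from the Euler--Lagrange equation — which is what, for $m=1$, excludes the non-circular convex planar curves that also saturate Fenchel — identifies the extremals, and then $Q_*$ is the flat disk bounded by $T_*$. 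The genuine difficulties are (i) the existence and sufficient regularity of the optimal pair $(T_*,Q_*)$ in the current / flat-chain setting, together with ruling out degeneration in the compactness step, and (ii) carrying the monotonicity-and-rigidity argument through with only the weak regularity available for area-minimising currents and stationary varifolds. It is precisely to sidestep (i) — never needing the optimiser, and instead propagating an \emph{almost}-monotone isoperimetric difference out of $\Sigma$ and closing up via the varifold Willmore bound — that a mean curvature flow argument is the more robust route, and the one that survives the passage to Cartan--Hadamard targets.
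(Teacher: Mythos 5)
First, a point of reference: the paper does not prove this statement. Theorem \ref{thm:almgren} is quoted from Almgren \cite{Almgren86} as background, so your proposal can only be measured against Almgren's original argument, which is structured very differently --- it works with an \emph{arbitrary} $T$ and its area-minimising filling, and never passes through an extremal configuration or a Willmore-type inequality.

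Your proposal has two gaps I would call fatal rather than technical. (1) The existence of the optimal pair $(T_*,Q_*)$ is not a ``routine concentration--compactness argument''. The ratio is maximised over all closed integral $m$-currents of unit mass in $\R^{n+1}$ with no a priori diameter bound; beyond translation invariance and exact splitting (which your convexity computation does handle), a maximising sequence can carry long thin pieces of small mass that defeat any diameter control, and --- more importantly --- even if a maximiser exists it is merely an integral current with generalised mean curvature in $L^\infty$, nowhere near regular enough for the rigidity and Gauss-map-type arguments you invoke afterwards. Avoiding the extremiser is precisely why Almgren's proof, and the flow argument of the present paper for $m=2$, are built the way they are. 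A smaller defect in the same step: the minimal filling mass is an infimum, hence only one-sidedly differentiable under deformations, so criticality yields the bound $\|\mathbf{H}_{T_*}\|_{L^\infty}\le \tfrac{m}{m+1}\,\bfM(T_*)/\bfM(Q_*)$ rather than the Euler--Lagrange identity $\mathbf{H}_{T_*}=\lambda\eta$; this happens to be harmless for the inequality (since $\|\mathbf{H}\|_{L^\infty}^m\bfM(T_*)\ge\int|\mathbf{H}|^m$), but you should not assert the equation. (2) Decisively, your reduction terminates at the sharp bound $\int_{T_*}|\mathbf{H}|^m\,d\mathcal H^m\ge m^m\,\mathcal H^m(\mathbb{S}^m)$ in arbitrary codimension. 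This is Fenchel for $m=1$ and Li--Yau for $m=2$, but the Li--Yau completion of the square in the monotonicity formula is special to $m=2$: the vector field $r^{-2}(x-p)$ pairs with $|\mathbf{H}|^2$ only there, and for $m\ge3$ the boundary term $\int r^{1-m}\langle \mathbf{H},\nabla^\perp r\rangle$ cannot be absorbed into $\int|\mathbf{H}|^m$. The sharp $L^m$ Willmore inequality for closed $m$-surfaces in higher codimension is not a known theorem for $m\ge 3$, and it is not ``what Almgren's argument amounts to''. So even granting the extremiser, your route closes only for $m\le2$; for $m=2$ what you need is exactly Theorem \ref{lem:lowerbound} in its varifold form (since $T_*$ is not smooth), and the content of the present paper is that one can then dispense with the extremiser entirely by flowing an arbitrary $\Sigma$ and propagating the isoperimetric difference.
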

Here $|Q|$ and $|T|$ denote the areas in dimension $m+1$ and $m$ respectively. To be more precise, the notion 'surface' can be taken to be real rectifiable currents, real polyhedral chains, integral currents or flat chains mod $\nu$.

A further natural question to extend \eqref{eq:iso.eucl} is to ask on which Riemannian manifolds $(M^n,g)$ does the Euclidean isoperimetric inequality hold. A natural candidate are Cartan-Hadamard manifolds $(M^n,g)$; that is complete, simply connected Riemannian manifolds with non-positive sectional curvature. We will denote the space of such manifolds with sectional curvatures bounded from above by $-\kappa$ for $-\kappa \leq 0$ by $\mathcal{CH}(n,-\kappa)$. The following conjecture appeared in Aubin \cite{Aubin76}, Burago-Zalgaller \cite{BuragoZalgaller88} and Gromov \cite{Gromov99}.

{\bf Conjecture:} Let $(M^n,g)\in \mathcal{CH}(n,0)$.  Then the Euclidean isoperimetric inequality \eqref{eq:iso.eucl} holds on $(M,g)$.

The conjecture can be strengthened by asking that if $(M,g) \in \mathcal{CH}(n,-\kappa)$ for $-\kappa <0$, then the isoperimetric inequality of the model space with sectional curvatures equal to $-\kappa$ holds. This has been proven for $n=2$ and $\kappa =0 $ by Weil \cite{Weil26}, for $n=2$ and $\kappa \neq 0 $ by Bol \cite{Bol41} , for $n=3$ and $-\kappa \leq 0$ by Kleiner \cite{Kleiner92} and for $n=4$ and $\kappa = 0$ by Croke \cite{Croke84}.  For a more detailed overview of the history of the conjecture and a partial extension for $n=4$ and $\kappa \neq 0$ see Kloeckner-Kuperberg \cite{KloecknerKuperberg13}. The remaining cases are open. Using a variant of mean curvature flow we gave an alternative proof of Kleiner's result in \cite{HkIso}.

The question if an isoperimetric estimate as in \eqref{eq:isoopt} holds for any dimension $m \leq n-1$ with a  non-optimal constant, depending only on $m$, was first resolved for Euclidean space by Federer-Fleming \cite{FedererFleming60} and by Gromov \cite{Gromov83} for a certain class of complete Riemannian manifolds, including Cartan-Hadamard manifolds. For further extensions to metric spaces see also \cite{AmbrosioKirchheim00, Gromov83, Wenger05}.

The main result in this article is an extension of Almgren's result to $2$-dimensional surfaces in Cartan-Hadamard manifolds with arbitrary codimension.

\begin{theorem}\label{thm:mainthm}
Let $(M^{n},g) \in \mathcal{CH}(n,-\kappa), -\kappa\leq 0, n\geq 3$ and $\Sigma\subset M$ an integral 2-current (flat chain mod 2) with compact support such that $\partial \Sigma =0$. Let $S$ be an area minimising integral 3-current (flat chain mod 2) such that $\partial S = \Sigma$. Then
\begin{equation}\label{eq:main.1}
\mathbf{M}[\Sigma] \geq \ch^2(\partial B_r) \ ,
\end{equation}
where $B_r$ is a geodesic ball in the $3$-dimensional model space with sectional curvatures equal to $-\kappa$ and radius $r$ such that $\ch^3(B_r) = \mathbf{M}[S]$.
\end{theorem}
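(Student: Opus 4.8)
The plan is to run the mean curvature flow of $\Sigma$ in a weak (Brakke) sense --- necessarily weak, since $\Sigma$ has high codimension --- and along the flow to monitor the isoperimetric difference between the moving $2$-current and its area-minimising $3$-dimensional filling, showing this difference is almost monotone non-increasing and tends to zero as the flow becomes extinct. To set up, write $V_\kappa(A)$ for the volume of the geodesic ball in the $3$-dimensional model of curvature $-\kappa$ whose boundary sphere has area $A$ (so $V_0(A)=A^{3/2}/(6\sqrt\pi)$); since $V_\kappa$ is strictly increasing, \eqref{eq:main.1} is equivalent to $\mathbf M[S]\le V_\kappa(\mathbf M[\Sigma])$. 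First I would use Ilmanen's elliptic regularisation in $\tilde M:=M\times\R$ --- which is again Cartan--Hadamard, with curvature $\le-\kappa$ in the $M$-directions --- to produce, for each $\eps>0$, a translating soliton $\hat\Sigma_\eps$, i.e.\ a $3$-current in $\tilde M$ with $\partial\hat\Sigma_\eps=\Sigma\times\{0\}$, whose rescaled height slices $\Sigma_t^\eps$ form an $\eps$-approximate mean curvature flow of $\Sigma$; as $\eps\to0$ these converge to a Brakke flow $(\Sigma_t)_{t\in[0,T)}$ with $\Sigma_0=\Sigma$, extinct at a finite time $T$, and for a.e.\ $t$ (and a.e.\ height) the relevant slice is an integral $2$-varifold with first variation in $L^2$. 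This is the \emph{flow one dimension higher} of the abstract: the monotone quantity is really computed on $\hat\Sigma_\eps$ itself, but it is cleanest to phrase it slicewise.

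For each $t$, choose an area-minimising $3$-current $S_t^\eps$ in $M$ with $\partial S_t^\eps=\Sigma_t^\eps$ (of finite mass, indeed $\le C\,\mathbf M[\Sigma_t^\eps]^{3/2}$ by Federer--Fleming), and set $I^\eps(t):=V_\kappa(\mathbf M[\Sigma_t^\eps])-\mathbf M[S_t^\eps]$. The core is a differential inequality for $I^\eps$. Up to an elliptic-regularisation error one has $\frac{d}{dt}\mathbf M[\Sigma_t^\eps]=-\int_{\Sigma_t^\eps}|\vec H_\eps|^2\,d\mu_t$; and comparing $S_{t+\delta}^\eps$ with the competitor $S_t^\eps$ plus the $3$-dimensional region swept out by the flow on $[t,t+\delta]$ (whose mass is at most $\delta\int_{\Sigma_t^\eps}|\vec H_\eps|+o(\delta)$) gives $\frac{d}{dt}\mathbf M[S_t^\eps]\ge-\int_{\Sigma_t^\eps}|\vec H_\eps|\,d\mu_t$, again up to a vanishing error. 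Since $V_\kappa'>0$ this yields $\frac{d}{dt}I^\eps(t)\le -V_\kappa'(\mathbf M[\Sigma_t^\eps])\int|\vec H_\eps|^2+\int|\vec H_\eps|+E(\eps,t)$, and Cauchy--Schwarz $\int|\vec H_\eps|\le\mathbf M[\Sigma_t^\eps]^{1/2}\big(\int|\vec H_\eps|^2\big)^{1/2}$ makes the principal part non-positive precisely when $\int_{\Sigma_t^\eps}|\vec H_\eps|^2\ge\mathbf M[\Sigma_t^\eps]/V_\kappa'(\mathbf M[\Sigma_t^\eps])^2$. A short computation in the model identifies the right-hand side as $16\pi+4\kappa\,\mathbf M[\Sigma_t^\eps]$, so this is exactly the optimal Willmore inequality; invoking the optimal lower bound $\int|\vec H|^2\ge 16\pi+4\kappa\,\mathbf M[\,\cdot\,]$ for $2$-dimensional integral varifolds with $L^2$ first variation in $(M,g)$ closes the almost monotonicity, with $\int_0^T|E(\eps,t)|\,dt\to0$ as $\eps\to0$. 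Since $\mathbf M[\Sigma_t^\eps]\to0$, and hence $\mathbf M[S_t^\eps]\to0$, as $t\uparrow T$, integrating gives $I^\eps(0)\ge-\int_0^T|E(\eps,t)|\,dt$, and letting $\eps\to0$ (using $\Sigma_0^\eps\to\Sigma$ and $\mathbf M[S_0^\eps]\to\mathbf M[S]$) yields $V_\kappa(\mathbf M[\Sigma])\ge\mathbf M[S]$, i.e.\ \eqref{eq:main.1}. For equality, $I^\eps(t)\to0$ for a.e.\ $t$ forces equality throughout: in Cauchy--Schwarz (so $|\vec H_t|$ is constant on each slice), in the Willmore inequality (so each slice is a round geodesic sphere lying in a totally geodesic $3$-dimensional model submanifold), and in the swept-region estimate (so $S_t$ is the enclosed geodesic ball); at $t=0$ this identifies $\Sigma$ as a geodesic sphere and $S$ as the corresponding ball. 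The flat-chain-mod-$2$ case is identical.

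The two substantial inputs --- and the main obstacles --- are exactly those named by the method. First, the optimal Willmore inequality $\int|\vec H|^2\ge16\pi+4\kappa\,\mathbf M[\Sigma]$ must be established for integral $2$-varifolds whose first variation is merely $L^2$-summable, not for smooth surfaces; this requires a Li--Yau/monotonicity-formula argument adapted to such varifolds, genuinely uses the curvature bound $\le-\kappa$, and must come with the rigidity statement that equality forces a round sphere inside a totally geodesic model slice. Second, making the almost monotonicity rigorous needs careful control of the elliptic-regularisation error terms together with good behaviour of the minimal-filling mass $\mathbf M[S_t^\eps]$ along a weak flow --- its continuity in $t$, the validity of the first-variation/swept-region comparison when $\Sigma_t$ is only a Brakke flow, and the convergence as $\eps\to0$. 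In short, the formal computation of $\tfrac{d}{dt}I^\eps$ above is the easy part; getting it to hold at the level of the Brakke flow produced by elliptic regularisation, and proving the sharp varifold Willmore bound, is where essentially all the work lies.
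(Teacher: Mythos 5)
Your overall strategy (elliptic regularisation, an almost-monotone isoperimetric difference along the approximate flows, and the sharp varifold Willmore bound $\int|\mathbf H|^2\ge 16\pi+4\kappa\,\mu(M)$ as the engine) is the same as the paper's, and you correctly isolate the two main inputs. However, there is a genuine gap at the step you yourself flag as ``where the work lies,'' and the route you propose for it does not close. You choose, for each time $t$, a separate area-minimising $3$-current $S^\eps_t$ in $M$ with $\partial S^\eps_t=\Sigma^\eps_t$ and try to differentiate $t\mapsto \mathbf M[S^\eps_t]$ via the competitor ``$S^\eps_t$ plus the region swept out on $[t,t+\delta]$, of mass $\le\delta\int|\mathbf H_\eps|+o(\delta)$.'' For a weak (Brakke) flow, and already for the $\eps$-approximate flows, this swept-region bound is not available in that form: the natural estimate (Ilmanen's $\mathbf M[\pi_\#(T^\eps_B)]\le(|B|+\eps^2)^{1/2}\mathbf M[\Sigma]$, i.e.\ Cauchy--Schwarz of $\int\int|\mathbf H|$ against $\int\int|\mathbf H|^2$ in time) only gives a $\sqrt\delta$ bound, which destroys the differential inequality; moreover the slices may cancel (the flow measure dominates, but need not equal, the slice of the space-time track), the choice of $S^\eps_t$ is not canonical, and $t\mapsto\mathbf M[S^\eps_t]$ is only H\"older-$1/2$ a priori, not differentiable with the derivative you need. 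This is exactly the obstruction the paper points out (``it is not clear to us how to construct a sufficiently regular family of spanning minimal surfaces such that the above monotonicity calculation can be performed'').

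The paper's way around it is the key idea missing from your proposal: do \emph{not} fill each time slice separately. Instead take a single mass-minimising $4$-current $S^i$ in $M\times\R$ with $\partial S^i=P^i-S_0$ (where $P^i$ is the translating soliton) and define the family by translation, $S^i(t)=S^i-(t/\eps_i)\tau$. The variation field is then explicitly $-\eps_i^{-1}\tau$, and strong stationarity of $S^i$ gives an exact first-variation identity $\tfrac{d}{dt}V^i_t=-\int\langle\nabla\varphi_l,\eps^{-1}\tau^\perp\rangle\,d\mu^{S,i}_t+\int\varphi_l\langle\mathbf H,\mathbf n\rangle\,d\mu^i_t$, with the first term rewritten as a total derivative (Huisken--Ilmanen) and evaluated only at the endpoints. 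This in turn forces two further ingredients you omit: (i) the endpoint at $t=0$ requires $S^i\to S_0\times[0,\infty)$, which needs $S_0$ to be the \emph{unique} minimal filling -- arranged by the perturbation $\Sigma\rightsquigarrow\Sigma_k$ together with a unique-continuation argument; (ii) the endpoint near $T_{\max}$ requires the filling volume to vanish, which is proved not from smallness of the slice area but by an explicit competitor built from the undercurrent, $\pi_\#(T\cap\{z\ge t\})\times\R$, whose mass is $O((T_{\max}-t)^{1/2})$. Without replacing your slicewise construction by something of this kind, the central differential inequality for the isoperimetric difference is not justified.
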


Here $\bfM [\,\cdot\,]$ denotes the mass of a current and $\mathcal{H}^k$ the $k$-dimensional Hausdorff measure in the model space. The corresponding isoperimetric inequality in a Cartan-Hadamard manifold for closed (smooth) curves bounding a smooth minimal surface in any codimension follows from general results of Reshetnyak \cite{Resh-majorization, Resh-isop}.

We can also characterise the equality case.

\begin{theorem}\label{thm:mainthm_2}
If equality in \eqref{eq:main.1} is attained, then $\Sigma$ is a smooth embedded $2$-sphere, has unit density, its mean curvature vector has constant length and $\Sigma$ is totally umbilic. Furthermore, it bounds a totally geodesic embedded $3$-ball $S$, with the mean curvature vector of $\Sigma$ proportional to the unit conormal of $S$ at every point in $\Sigma$.  $S$ is isometric to a geodesic ball in the $3$-dimensional model space such that the mean curvature of the boundary coincides with the one of $\Sigma$.
\end{theorem}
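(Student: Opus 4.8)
The plan is to trace back through the proof of Theorem~\ref{thm:mainthm} and extract the rigidity in each inequality that was used, since equality in \eqref{eq:main.1} forces equality in every intermediate estimate. The argument for Theorem~\ref{thm:mainthm} proceeds via an almost monotonicity of an isoperimetric difference along the elliptic-regularisation mean curvature flow in one dimension higher, combined with the optimal Willmore estimate $\mathbf{H}^2 \geq$ (a multiple of something) for $2$-dimensional integral varifolds with $L^2$ first variation. So first I would identify the limiting object: equality means the approximating flows must, in the limit, be stationary (the isoperimetric difference is already at its optimal value at time zero and cannot decrease), which should force $\Sigma$ to be a critical point and, more strongly, to saturate the Willmore inequality. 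By the equality case of the optimal Willmore/Minkowski-type inequality for varifolds, saturation forces $\Sigma$ to be a smooth embedded round sphere in the sense that its mean curvature vector has constant length and it is totally umbilic; in the Euclidean case equality in the relevant Willmore bound characterises round spheres, and the comparison-geometry version should give a geodesic sphere in a totally geodesic $3$-flat.

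Next I would handle the ambient geometry. Equality in the isoperimetric comparison with the $-\kappa$ model space is, as in Kleiner's argument and our previous paper \cite{HkIso}, extremely rigid: all the curvature comparison (Heintze--Karcher / Hessian comparison for the distance function, coarea, etc.) used along the flow must hold with equality. This forces the relevant region swept out by $S$ to have constant sectional curvature $-\kappa$ along the directions that enter the comparison, and in fact $S$ must be totally geodesic and isometric to a geodesic ball in the $3$-dimensional model space $M_{-\kappa}^3$. The first variation / stationarity of the limit flow then gives that the mean curvature vector of $\Sigma = \partial S$ is proportional to the inward unit conormal of $S$ at every point, i.e. $\Sigma$ has no mean curvature in directions normal to $S$, and its length matches the mean curvature of a geodesic sphere of the appropriate radius in $M_{-\kappa}^3$. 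Unit density follows because any higher-density point would strictly increase the mass for fixed "boundary data" and violate equality, or alternatively from the regularity forced by the Willmore equality case.

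Concretely, the steps in order: (1) From equality, deduce the approximating translating solutions converge to a static configuration and the isoperimetric difference is constant $\equiv 0$ along the limit flow. (2) Deduce the varifold limit of $\Sigma$ saturates the optimal Willmore energy estimate, hence by its equality case $\Sigma$ is smooth, embedded, of unit density, with $|\vec H|$ constant and totally umbilic. (3) From equality in the curvature comparisons, deduce $S$ is totally geodesic, hence a $3$-dimensional submanifold of constant curvature $-\kappa$, and that the region it spans is isometric to $B_r \subset M_{-\kappa}^3$ with $\mathcal H^3(B_r) = \mathbf M[S]$. (4) From stationarity of the limiting flow (the first variation identity for the area-minimising $S$ with free boundary on the flow), conclude $\vec H$ of $\Sigma$ is proportional to the conormal of $S$ and that $|\vec H|$ equals the mean curvature of $\partial B_r$ in the model. (5) Combine: $\Sigma$ is the boundary $\partial B_r$, a geodesic sphere, embedded with unit density.

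The main obstacle I expect is step (3)–(4): upgrading "equality in the pointwise comparison inequalities almost everywhere along a weak flow" to the clean statement that $S$ itself (not just some limit of swept regions) is a genuinely totally geodesic embedded $3$-ball isometric to the model ball, and that the proportionality of $\vec H$ with the conormal holds at \emph{every} point of $\Sigma$ rather than almost everywhere. This requires promoting the a.e. rigidity through the regularity theory — using that equality in the Willmore estimate already gives $\Sigma$ smooth, then bootstrapping via the (now smooth) mean curvature flow and the strong maximum principle / unique continuation to rule out any curvature concentration in $S$, and finally invoking interior and boundary regularity for area-minimising currents with the now-smooth boundary $\Sigma$ to identify $S$ with the model ball. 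The rest is comparatively routine once the limiting flow is known to be static and smooth.
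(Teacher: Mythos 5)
Your overall strategy --- forcing equality in every inequality of the monotonicity argument and invoking the rigidity in the optimal Willmore estimate --- is the same as the paper's, and your steps (3)--(4) are in fact already packaged inside the equality case of Theorem \ref{lem:lowerbound}, so no separate curvature-comparison argument for $S$ is needed. The genuine gap is in step (2): you cannot apply the Willmore equality case to $\Sigma$ itself. A priori $\Sigma$ is only an integral $2$-current; nothing guarantees it carries a weak mean curvature in $L^2$, so Theorem \ref{lem:lowerbound} is not even applicable at $t=0$. What equality in \eqref{eq:main.1} actually yields, via Lemma \ref{lem:moncalc.1}, is $\int|\mathbf{H}|^2\,d\mu_t = 16\pi + 4\kappa\,\mu_t(M)$ for \emph{almost every} $t$ in some interval $(0,\delta)$, and the crux of the proof is transferring the resulting rigidity of the positive-time slices back to the initial datum. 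For this the paper needs three ingredients your proposal does not supply: (i) the lower mass bound of Lemma \ref{lem:minexist}, without which the Brakke flow could vanish instantly and the a.e.-$t$ statement would be vacuous; (ii) unit regularity of the approximating flows together with White's local regularity theorem, to upgrade ``a.e.\ $t$'' to smoothness of the flow on all of $(0,\delta)$ with smooth convergence; and (iii) the $C^{1/2}$ flat-norm continuity $\operatorname{dist}(\Sigma, T_t)\le (t+t^{1/2})\,\mathbf{M}[\Sigma]$, which identifies the smooth limit of the round spheres $\Sigma_t$ as $t\searrow 0$ with $\Sigma$. Your ``main obstacle'' paragraph locates the difficulty in promoting a.e.\ rigidity inside $S$, but the real issue is obtaining any rigidity at $t=0$ at all.

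Two further points. First, the proof of Theorem \ref{thm:mainthm} runs the monotonicity for the perturbed boundaries $\Sigma_k$ (to secure uniqueness of the spanning minimiser), so the equality case requires an additional limit $k\to\infty$ of the Brakke flows --- again controlled by Lemma \ref{lem:minexist} --- before the slice rigidity is available; this is absent from your outline. Second, a small but real error in step (1): equality does not make the limiting flow ``static'' or ``stationary''. It is a genuinely moving family of shrinking geodesic spheres (indeed $\int|\mathbf{H}|^2\,d\mu_t = 16\pi + 4\kappa\,\mu_t(M) > 0$ for all relevant $t$); what is forced to be constant is the isoperimetric difference, not the flow.
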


A central ingredient in the proof is an optimal lower bound for the Willmore energy of an integral $2$-varifold with compact support. 

\begin{theorem} \label{lem:lowerbound} Let $(M^{n},g) \in \mathcal{CH}(n,-\kappa), -\kappa \leq 0, n\geq 3$ and $\Sigma^2$
 be an integral 2-varifold in $M$ with compact support and that the first variation of $\Sigma$ is summable in $L^2(\mu)$. Then
\begin{equation}\label{eq:main.3}
 \int_\Sigma |\mathbf{H}|^2 \, d\mu \geq 16\pi + 4 \kappa |\Sigma| \ , 
 \end{equation}
where $\mathbf{H}$ is the weak mean curvature of $\Sigma$. If equality is attained, then $\Sigma$ is a smooth embedded $2$-sphere, has density one, the mean curvature vector has constant length and $\Sigma$ is totally umbilic. Furthermore, it bounds a totally geodesic embedded $3$-ball $S$, with the mean curvature vector of $\Sigma$ proportional to the unit conormal of $S$ at every point in $\Sigma$.  $S$ is isometric to a geodesic ball in the 3-dimensional model space such that the mean curvature of the boundary coincides with the one of $\Sigma$.
\end{theorem}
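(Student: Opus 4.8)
The plan is to prove \eqref{eq:main.3} via a monotonicity formula for a Willmore-type density, extending to $\mathcal{CH}(n,-\kappa)$ the Euclidean monotonicity identity of Simon (and its sharpenings by Kuwert--Sch\"atzle and Li--Yau). The crucial geometric input is the Hessian comparison for the distance function $r=d(\,\cdot\,,x_0)$ to a point $x_0\in M$: since $(M,g)\in\mathcal{CH}(n,-\kappa)$ there is no cut locus, $r$ is smooth on $M\setminus\{x_0\}$, and
\begin{equation*}
\nabla^2 r\ \geq\ \mathrm{c}_\kappa(r)\,\big(g-dr\otimes dr\big),\qquad \mathrm{c}_\kappa(r)=\begin{cases}1/r,&\kappa=0,\\[1mm]\sqrt{\kappa}\coth(\sqrt{\kappa}\,r),&\kappa>0,\end{cases}
\end{equation*}
with equality in the plane spanned by $\nabla r$ and a unit tangent vector $e$ precisely when the sectional curvature of that plane is $-\kappa$ along the radial geodesic. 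Note that $\mathrm{c}_\kappa(r)\geq 1/r$, with a strictly positive deficit when $\kappa>0$.

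First I would fix $x_0\in\spt\mu$ with $\theta^2(\mu,x_0)\geq 1$, which exists since $\Sigma$ is integral and $\spt\mu$ is compact. Feeding into the first variation identity $\int\operatorname{div}_\Sigma X\,d\mu=-\int\bfH\cdot X\,d\mu$ the radial vector fields $X=\eta(r)\,(\nabla r)^\top$, with $(\nabla r)^\top$ the tangential part of $\nabla r$ and $\eta$ a scaling/cut-off profile, and using $\operatorname{div}_\Sigma(\nabla r)=\sum_{i=1}^2\nabla^2 r(e_i,e_i)$, the Hessian comparison replaces the Euclidean identity $\operatorname{div}_\Sigma(\nabla r)=r^{-1}(1+|(\nabla r)^\perp|^2)$ by the inequality $\operatorname{div}_\Sigma(\nabla r)\geq \mathrm{c}_\kappa(r)\,(1+|(\nabla r)^\perp|^2)$. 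Carrying this through Simon's computation and completing the square in $\bfH$ and the radial term exactly as in the flat case, one obtains for a.e.\ $0<\sigma<\rho$ an identity of the schematic shape
\begin{equation*}
\frac{\mu(\{r<\rho\})}{\rho^2}-\frac{\mu(\{r<\sigma\})}{\sigma^2}+\mathcal{N}(\sigma,\rho)\;=\;\frac{1}{16}\int_{\{\sigma<r<\rho\}}|\bfH|^2\,d\mu\;-\;\mathcal{C}_\kappa(\sigma,\rho),
\end{equation*}
where $\mathcal{N}\geq 0$ is the nonnegative (completed-square plus radial-tilt) remainder and $\mathcal{C}_\kappa\geq 0$ collects the curvature contribution produced by the Hessian deficit $\mathrm{c}_\kappa(r)-1/r$ together with the sectional-curvature bound. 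Letting $\sigma\downarrow 0$, the left endpoint converges to $\pi\,\theta^2(\mu,x_0)$, a standard consequence of $\bfH\in L^2(\mu)$ and integrality; letting $\rho\uparrow\infty$, the term $\mu(\{r<\rho\})/\rho^2$ tends to $0$ because $\spt\mu$ is compact while $\mu(\{r<\rho\})\to|\Sigma|$. A computation, whose constant can be verified on the geodesic spheres of the $3$-dimensional model space where the inequality is sharp, then yields $\mathcal{C}_\kappa(0,\infty)\geq \tfrac{\kappa}{4}|\Sigma|$, so that $\pi\,\theta^2(\mu,x_0)+\tfrac{\kappa}{4}|\Sigma|\leq \tfrac{1}{16}\int_\Sigma|\bfH|^2\,d\mu$; since $\theta^2(\mu,x_0)\geq 1$ this is exactly \eqref{eq:main.3}. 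The technical heart of this step is establishing $\mathcal{C}_\kappa(0,\infty)\geq \tfrac{\kappa}{4}|\Sigma|$ with the correct constant and in the low-regularity ($L^2$-mean-curvature) setting; this is precisely where the quantitative Hessian comparison is used, rather than mere nonpositivity of the curvature.

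For the equality case I would trace which of the above inequalities are saturated. Equality forces $\theta^2(\mu,x_0)=1$ for every $x_0\in\spt\mu$ (otherwise the density term overshoots), hence $\Sigma$ has unit density; it forces $\mathcal{N}\equiv 0$, so that the completed square vanishes $\mu$-a.e., giving a pointwise proportionality of $\bfH$ with the radial normal for every choice of centre — the varifold analogue of the Euclidean Willmore rigidity, which via Allard's regularity theorem upgrades $\spt\mu$ to a smooth embedded $2$-sphere with $|\bfH|$ constant; and it forces $\mathcal{C}_\kappa\equiv 0$, making the Hessian comparison an equality along all radial geodesics meeting $\Sigma$. Combining this with the pointwise Gauss-equation identity $\tfrac14|\bfH|^2=K_\Sigma-\overline{K}(T\Sigma)+\tfrac14|A_{11}-A_{22}|^2+|A_{12}|^2$ and Gauss--Bonnet, the equality value $\int_\Sigma|\bfH|^2=16\pi+4\kappa|\Sigma|$ then forces $\Sigma$ to be totally umbilic ($A_{11}=A_{22}$, $A_{12}=0$) and $\overline{K}(T_p\Sigma)\equiv-\kappa$. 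Finally, the equality in the Hessian comparison pins down a $3$-dimensional totally geodesic submanifold of $M$ containing $\Sigma$ as a geodesic sphere — hence isometric to a geodesic ball of the $3$-dimensional model space — and $S$ is the enclosed totally geodesic $3$-ball, with $\bfH$ proportional to the conormal of $S$ along $\Sigma$, as claimed. Besides the sharp constant in $\mathcal{C}_\kappa$, I expect the main obstacle to be organising this rigidity: upgrading the varifold to a smooth surface and assembling the local tightness of each inequality into the global model-space picture, in particular the construction of the totally geodesic ball $S$.
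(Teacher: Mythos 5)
Your overall strategy --- testing the first variation with a radial vector field, invoking the Hessian comparison for $r=d(\cdot,x_0)$, completing the square, and sending $\sigma\to0$ at a point of density $\geq 1$ --- is exactly the paper's, and for $\kappa=0$ your sketch is essentially the paper's proof. (One small correction: you must test with the full ambient field $X=\varphi(r)\bar\nabla r$ rather than its tangential part, since $\bfH$ is normal and $\int\langle\bfH,X^\top\rangle\,d\mu=0$.)

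The genuine gap is in the case $\kappa>0$. You keep the Euclidean weight (so that $\mu(\{r<\rho\})/\rho^2$ appears) and propose to extract the term $4\kappa|\Sigma|$ from the accumulated Hessian deficit $\mathrm{c}_\kappa(r)-1/r$, claiming $\mathcal{C}_\kappa(0,\infty)\geq\tfrac{\kappa}{4}|\Sigma|$. That bound is false. With $\varphi(r)=1/r$ the first-order term produced by the comparison is $\varphi\Psi+\varphi'=r^{-1}\big(\sqrt{\kappa}\coth(\sqrt{\kappa}\,r)-r^{-1}\big)$, which is only about $\kappa/3$ near $r=0$ and decays like $\sqrt{\kappa}/r$ as $r\to\infty$; on a geodesic sphere of large radius $R$ in the $3$-dimensional model space --- where \eqref{eq:main.3} is an equality --- its integral is $O(|\Sigma|/R)$, far below $\tfrac{\kappa}{4}|\Sigma|$. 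Nor can the missing amount be recovered from the completed square: there the comparison only improves the coefficient of $|\bfH|^2$ multiplicatively, to $\tfrac{1}{4(r\sqrt{\kappa}\coth(\sqrt{\kappa}\,r)+1)}$, which again degenerates for large $r$. The paper's fix is to change the radial weight: for $\kappa=1$ it takes $\varphi_\sigma(r)=\sinh(r)/(2\cosh(r)-2)_{\sigma^2}$, for which one computes exactly $\varphi\psi+\varphi'\equiv\tfrac12$ and $\varphi\psi-\varphi'=2\varphi^2$ away from the centre; the first identity delivers the additive $\tfrac12$ per unit area (hence $4\kappa|\Sigma|$ after normalisation) uniformly in $r$, and the second makes the completion of the square produce exactly $\tfrac18|\bfH|^2$ with no loss, so sharpness on all geodesic spheres is manifest. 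Secondarily, for the equality case the paper does not pass through Allard regularity and Gauss--Bonnet: following Lamm--Sch\"atzle, the vanishing of the completed square for all centres yields the explicit description $\spt\mu=\exp_y\big(\partial B_{r_0}(r_0e_3)\cap\text{span}\{e_1,e_2,e_3\}\big)$ directly, and equality in the Hessian comparison along radial geodesics combined with the Gauss equations then shows the spanned disk is totally geodesic and isometric to the model ball. Your outline of this part is plausible, but note that your Gauss--Bonnet step already presupposes the identification of the ambient sectional curvatures on $T\Sigma$, which itself must come from the Hessian rigidity.
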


This estimate for $n=3$ and $\kappa =0$ appeared already in \cite[Lemma 6.7]{HkIso}. In Euclidean space the estimate can be found in work of Simon \cite{Simon93}, and follows rather directly from the usual calculations leading to the monotonicity formula. The characterisation of the equality case in an Euclidean ambient is given by Lamm-Sch\"atzle \cite{LammSchaetzle14} together with a stability result. For smooth surfaces in an Euclidean ambient space this is the well known Li-Yau estimate, \cite{LiYau82}. For sufficiently regular surfaces in codimension one which are outward minimising, an analogous estimate following from the Gauss-Bonnet formula is central in the argument of Kleiner \cite{Kleiner92} (see also the alternative proof of Ritor\'e \cite{Ritore05}, which does not require the condition of outward minimising).

\begin{remark}
For $(M,g) \in  \mathcal{CH}(n,0)$ and $\Sigma$ an integral $m$-varifold in $M$, one can use the variant of the Michael-Simon Sobolev inequality \cite{MichaelSimon73} for Riemannian manifolds by Hoffman-Spruck \cite{HoffmanSpruck74, HoffmannSpruck75} (which carries over to the setting of integral varifolds) to get an estimate
$$|\Sigma|^\frac{m-2}{m} \leq C(m) \int_\Sigma |\mathbf{H}|^2 \, d\mu $$
where $C(m)$ depends only on $m$. This constant is not optimal, but the proof of Theorem \ref{thm:mainthm} carries over to any dimension and codimension, yielding a non-optimal inequality for integral currents or flat chains mod 2 as in \eqref{eq:isoopt} with a constant only depending on $m$. Alternatively, restricting to an open, precompact set $K \subset M$,  a direct comparison with Euclidean space gives a non-optimal isoperimetric inequality, where the constant depends on $(M,g)$ and $K$, see Lemma \ref{lem:iso}.
\end{remark}

We give a first outline of the idea of the proof of Theorem \ref{thm:mainthm} for $\kappa =0$. Assume $ n\geq 3, (M^n,g) \in \mathcal{CH}(n,0)$ and that $\Sigma$ is an orientable, closed, smooth, 2-dimensional submanifold of $M$. Let $(\Sigma_t)_{0\leq t<T}$ be its smooth evolution by mean curvature flow with $\Sigma_0 = \Sigma$. Assume further that there exists a smooth family 
\begin{equation}\label{eq:intro.0}
(S_t)_{0\leq t<T}
\end{equation}
of minimal 3-dimensional (immersed) submanifolds in $M$ such that $ \partial S_t = \Sigma_t$,
and let $X_t$ be the variation vectorfield along $(S_t)_{0\leq t<T}$. The first variation formula then implies that
$$\frac{d}{dt} |S_t| = \int_{S_t} \text{div}_{S_t}(X)\, d\ch^3 = \int_{\Sigma_t} \langle X, \mathbf{n}\rangle\, d\ch^2 =\int_{\Sigma_t} \langle \bfH, \mathbf{n}\rangle\, d\ch^2\ ,$$
where $\mathbf{n}$ is the unit conormal of $S_t$ along $\partial S_t = \Sigma_t$ and $\bfH$ is the mean curvature vector of $\Sigma_t$. Similar to \cite{HkIso}, we consider the isoperimetric difference
$$I_t=|\Sigma_t|^{3/2} - 6 \sqrt{\pi} |S_t|$$
and compute, using \eqref{eq:main.3} in the second line,
\begin{equation*}\begin{split}
 - \frac{d}{dt} |S_t| &= -   \int_{\Sigma_t} \langle \bfH, \mathbf{n}\rangle\, d\ch^2 \leq  \int_{\Sigma_t} |\bfH| \, d\ch^2 \\
 &\leq \left(\int_{\Sigma_t} |\bfH|^2 \, d\ch^2\right)^{1/2} |\Sigma_t|^{1/2} \cdot \frac{1}{4 \sqrt{\pi}} \left(\int_{\Sigma_t} |\bfH|^2 \, d\ch^2\right)^{1/2} \\
 &\leq \frac{1}{4 \sqrt{\pi}} |\Sigma_t|^{1/2}\int_{\Sigma_t} |\bfH|^2 \, d\ch^2 = - \frac{1}{6 \sqrt{\pi}} \frac{d}{dt} |\Sigma_t|^{3/2}
 \end{split}
 \end{equation*}
 and thus $\tfrac{d}{dt} I_t \leq 0$. If the flow  $(\Sigma_t)_{0\leq t<T}$ and the family $(S_t)_{0\leq t<T}$ exists long enough such that $\lim_{t\ra T} |S_t| =0$, this shows that
$$ |S_0| \leq \frac{1}{6\sqrt{\pi}} |\Sigma_0|^{3/2}\ .$$
But in general it can't be expected that the flow does not develop singularities before the spanning volume goes zero. It is also not clear why a sufficiently regular family $(\Sigma_t)_{0\leq t<T}$ should exist. To be able to evolve through singularities we would like to work with a weak solution of mean curvature flow, in our case the most suitable one seems to be a Brakke flow. But there are only very little regularity results for higher codimension, even sudden vanishing is possible. Furthermore, it is not clear to us how to construct a sufficiently regular family of spanning minimal surfaces such that the above monotonicity calculation can be performed. 

To circumvent this problem we work with Ilmanen's elliptic regularisation scheme \cite{Ilmanen}. In this work Ilmanen combines the elliptic regularisation approach of Evans-Spruck \cite{EvansSpruckI}  in codimension one with the moving varifold solutions of Brakke \cite{Brakke} to construct Brakke flow solutions with special properties. Treating all surfaces as if they were smooth and avoiding some of the technical details, we give an overview of the argument to prove Theorem \ref{thm:mainthm} for $\kappa = 0$.

Let $\Sigma_0 \subset M$ be an integral $2$-current with compact support such that $\partial \Sigma_0 = 0$.  We consider $\Sigma_0 \subset M\times \{0\} \subset M \times \R$ and denote by $z$ the coordinate in the additional $\R$-direction and $\tau$ the corresponding unit vector. Ilmanen's elliptic regularisation scheme yields a sequence $\eps_i>0,\, \eps_i \ra 0$, a sequence of integral $3$-currents $P^i$ such that $\partial P^i = \Sigma_0$, which yield translating solutions to mean curvature flow in $M \times \R$ via
$$P^i(t) = P^i - \frac{t}{\eps_i}\tau\ .$$ 
Let $\{\mu^i_t\}_{t \in \R}$ be the corresponding family of Radon measures. This sequence of flows converges as $i \ra \infty$ to a limiting Brakke flow $\{\bar{\mu}_t\}_{t \geq 0}$ which is invariant in $z$-direction, starting at $\Sigma_0\times \R$. The Brakke flow $\{\mu_t\}_{t \geq 0}$ starting at $\Sigma_0$ is then obtained via slicing $\{\bar{\mu}^i_t\}_{t \geq 0}$ at height $z=\text{const}$. Additionally, the sequence 
$$T^i = \kappa_{\eps_i}(P^i)\, ,$$
where $\kappa_{\eps_i}(x,z) = (x,\eps_i z)$, converges to a current $T \subset M\times \R^+$ such that $\partial T = \Sigma_0$. Furthermore, 
$$ \mu_t \geq \mu_{T_t}$$
where $\mu_{T_t}$ is the mass measure associated to the slice $T_t$ of $T$ at height $z=t$. The current $T$ is called the \emph{undercurrent} of the flow $\{\mu^i_t\}_{t \geq 0}$. Treating the $z$-direction as time, it can be helpful to think of $T$ as the space-time track of the flow $\{\mu_t\}_{t \geq 0}$, after taking into account possible cancellations. Furthermore for all $t>0$
\begin{equation}\label{eq:intro.1}
 P^i(t) \ra \pi(T_t) \times \R
\end{equation}
as $i \ra \infty$, where $\pi: M \times \R \ra M$ is the projection on the first factor. 

We choose $S_0$, a mass-minimising integral $3$-current with $\partial S_0 = \Sigma$ and $S^i$ mass-minimising integral $4$-currents in $M\times \R$ such that 
$$\partial S^i = P^i - S_0$$
and denote 
$$ S^i(t) = S^i - \frac{t}{\eps_i}\tau\, .$$
This family will serve as a family of minimal surfaces approximating the family \eqref{eq:intro.0} considered in the smooth monotonicity calculation. Note that the variation vectorfield of this family is just given by $X = - \eps_i^{-1}\tau$, which makes the monotonicity calculation for \eqref{eq:intro.3} feasible. 

Let $l >1$. We choose $\varphi_l \in C^2_c(\R)$ such that $0 \leq \varphi_l \leq 1/l$ with $\varphi_l = 1/l$ on $[2,l+2]$, $\varphi_l = 0$ on $(0,\infty) \setminus [1,l+3]$. We define the approximate area and volume by 
 $$ A^i_t = \int \varphi_l \, d\mu^i_t \qquad \text{and} \qquad V^i_t:= \int \varphi_l\, d\mu^{S,i}_t\, .$$
The averaging function $\varphi_l$ takes into account that in the limit $i \ra \infty,\, P^i(t)$ becomes vertical, and thus $A^i_t$ approximates $\mu_t(M)$.  For $t$ fixed and $i \ra \infty$ we expect that $S^i(t)$ has a similar behaviour  and thus $V^i_t$ approximates the measure of a family as in \eqref{eq:intro.0}.

In Euclidean space shrinking spheres with radius $R(t) = \sqrt{R^2-2mt}$ act as barriers for integral $m$-Brakke flows from the inside and from the outside. Using the properties of the Hessian of the distance function to a point $p$ in a Cartan-Hadamard manifold $M^n$ one can show that this remains true as barriers from the outside, and thus the flow $\{\mu^i_t\}_{t \geq 0}$ has a finite maximal existence time $T_\text{max} \leq R^2/4$, provided $\Sigma_0 \subset B_R(p)$. 

To see that 
\begin{equation}\label{eq:intro.2}
V^i_t < \eps \text{ for } t \text{ close to }T_\text{max}, \, l \geq l_0\text{ and }i \text{ sufficiently large,}
\end{equation}
one can use the future space-time track of the flow as a competitor: motivated by the fact that an estimate for the volume traced out by a mean curvature flow is given by the $L^1$-norm in time of the mean curvature vector, and the natural estimate
$$ \int_0^{T_\text{max}} |\mathbf{H}|^2 \, d\mu_t\, dt \leq  \bfM [\Sigma_0] \, ,$$
where $\bfM [\Sigma_0]$ is the measure of $\Sigma_0$, Ilmanen shows that
$$ \bfM [ \pi(T\cap \{z\geq t\})] \leq (T_\text{max}-t)^{1/2} \bfM [\Sigma_0]\ .$$
Noting that $\partial (\pi(T\cap \{z\geq t\})) = \pi(T_t)$ and recalling \eqref{eq:intro.1} we can use $\pi(T\cap \{z\geq t\}) \times \R$, up to a small error, as a competitor to $S^i(t)$ to achieve \eqref{eq:intro.2}.

For the monotonicity calculation we consider the approximate isoperimetric difference
\begin{equation}\label{eq:intro.3}
I^i_t=|A^i_t|^{3/2} - 6 \sqrt{\pi} |V^i_t|\, ,
\end{equation}
and show that this quantity is monotone in the limit as $l\ra \infty$ and $i \ra \infty$ between $t_0=0$ and $0<t_1<T_\text{max}$. To see this we show that the error terms in the time derivative of \eqref{eq:intro.3} are controllable and combine the property that $P^i(t)$ becomes vertical with the estimate \eqref{eq:main.3} and the lower semicontinuity of the $L^2$-norm of the mean curvature. Together with \eqref{eq:intro.2} this yield that
$$ (\bfM [\Sigma_0])^{3/2} \geq 6 \sqrt{\pi}\, \bfM [S_0]\, .$$

{\bf Structure of the paper.} In \S \ref{sec:elliptic} we recall Ilmanen's ellitpic regularisation scheme \cite{Ilmanen} and show the improved approximation \eqref{eq:intro.1}.  The barrier argument  and a comparison principle due to B.~White yield the estimate on the maximal existence time. We also prove a positive lower estimate on the maximal existence time for the limiting Brakke flow. 

An essential ingredient in controlling the error terms when showing the almost monotonicity of the approximate isoperimetric difference is to know that
\begin{equation}\label{eq:intro.4}
 S^i \ra S_0 \times [0,\infty)\, 
 \end{equation}
as $i \ra \infty$. To achieve this we first assume that $S_0$ is the \emph{unique} mass-minimising current spanning $\Sigma_0$. Using this assumption, we show in \S \ref{sec:initial attainment} that \eqref{eq:intro.4} holds. We later show that by perturbing $\Sigma_0$ slightly we can assume that $\Sigma_0$ bounds only one mass-minimising current. We also give uniform local area bounds for $S^i$. 

In \S \ref{sec: area vanishing final} we prove \eqref{eq:intro.2}. 

In \S \ref{sec: mon calc} we compute the time derivative of the approximate isoperimetric difference and show that the error terms are controllable in the limit $i \ra \infty$. We use a lower semi-continuity argument together with \eqref{eq:main.3} to prove Theorem \ref{thm:mainthm}. We also show that we can treat the case of equality, Theorem \ref{thm:mainthm_2}, using the characterisation of equality in Theorem \ref{lem:lowerbound}.

In \S \ref{sec: lower bound Willmore} we prove Theorem \ref{lem:lowerbound}.

In the appendix we collect several results needed in the prequel. We show that the mass minimising currents $S^i$ are strongly stationary and that there is a non-optimal isoperimetric inequality in any dimension and codimension in a Cartan-Hadamard manifold. Furthermore, we recall White's avoidance principle for Brakke flows and show how unique continuation for minimal surfaces in any codimension follows from work of Kazdan.

{\bf Acknowledgements.} We are grateful to C.~Bellettini and B.~White for several inspiring and  helpful discussions.

\section{Elliptic regularisation}\label{sec:elliptic}

We employ Ilmanen's elliptic regularisation
scheme \cite{Ilmanen} to construct a Brakke flow starting at $\Sigma$. We
recall the construction of Ilmanen, adapted to our setting, and its properties
needed in the sequel.

\begin{theorem}[\cite{Ilmanen}, \S 8.1]\label{matching-motion}
  Let $T_0$ be local
integral $m$-current in $(M^{m+k},g)$ with $\partial T_0 = 0$ and
finite mass ${\bf M}[T_0]<\infty$. Then there exists a local integral
$(m+1)$-current $T$ in $M\times [0,\infty)$ and a family
$\{\mu_t\}_{t\geq 0}$ of Radon measures on $M$ such that
\begin{itemize}
\item[$(i)$] (a) $\partial T = T_0$\\[1ex]
  (b) ${\bf M}[T_B]$, where $T_B = T\res (M\times B),\ B\subset
  \R$, is absolutely continuous with respect to $\mathcal{L}^1(B)$.\\[-2ex]
\item[$(ii)$] (a) $\mu_0=\mu_{T_0}, {\bf M}[\mu_t]\leq {\bf M}[\mu_0]$
  for $t>0$.\\[1ex]
(b) $\{\mu_t\}_{t\geq 0}$ is an integral $n$-Brakke flow.\\[-2ex]
\item[$(iii)$] $\mu_t\geq \mu_{T_t}$ for each $t\geq 0$, where $T_t$ is
  the slice $\partial(T\res (M^{m+k}\times [t,\infty))$.  
\end{itemize}
\end{theorem}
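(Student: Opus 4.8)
The natural route is to adapt Ilmanen's elliptic regularisation \cite{Ilmanen}: the structure is identical and only the estimates need to be localised for the Riemannian ambient. Write $N=M\times\R$ with coordinate $z$ on the second factor and $\tau=\p_z$, and fix an $(m+1)$-current $S_0$ in $M$ with $\p S_0=T_0$ and $\bfM[S_0]<\infty$ (Federer--Fleming, resp.\ Lemma \ref{lem:iso}); in the case of interest $\spt T_0$ is compact and all estimates below are uniform on a fixed relatively compact region of $M$. For $\eps>0$ minimise the weighted mass $\mathcal F_\eps(P)=\int e^{-z/\eps}\,d\mu_P$ over local integral $(m+1)$-currents $P$ in $N$ with $\p P=T_0$ (seen inside $M\times\{0\}$) and $\spt P\subset M\times[0,\infty)$; the competitor $S_0\times\{0\}$ gives $\inf\mathcal F_\eps\le\bfM[S_0]$, so a minimiser $P^\eps$ exists by the compactness theorem for integral currents and lower semicontinuity of $\mathcal F_\eps$, the constraint $\spt P^\eps\subset\{z\ge0\}$ being automatic after a cut-and-paste comparison and the slice of $P^\eps$ at $z=0$ being $T_0$. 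Minimality yields the Euler--Lagrange equation $\bfH^\eps=-\eps^{-1}(\nabla^N z)^\perp$, so $P^\eps$ is a translating soliton and the translates $P^\eps(t)=P^\eps-\tfrac{t}{\eps}\tau$ have mass measures $\mu^\eps_t$ forming an integral $m$-Brakke flow in $N$. Three $\eps$-uniform bounds are at hand: $\mathcal F_\eps(P^\eps)\le\bfM[S_0]$; local mass bounds for $P^\eps$ from the monotonicity formula for the minimiser, whose weight is $\eps$-uniformly comparable to $1$ on a fixed ball; and a localised energy bound $\int_0^\Lambda\int_{M\times(-1,1)}|\bfH^\eps|^2\,d\mu^\eps_t\,dt\le C(\Lambda)$ from Brakke's inequality with a test function compactly supported in $z$ together with the previous local mass bounds.

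Next set $\kappa_\eps(x,z)=(x,\eps z)$ and $T^\eps=(\kappa_\eps)_\#P^\eps$; since $\kappa_\eps$ fixes $M\times\{0\}$ we have $\p T^\eps=T_0$. Feeding the localised energy bound into the coarea/slicing inequality for the function $z$ produces a slab estimate $\bfM[T^\eps\res(M\times B)]\le\int_B f_\eps\,d\mathcal L^1$ on bounded $B$, with $f_\eps$ $\eps$-uniformly bounded on bounded sets; in particular $T^\eps$ has $\eps$-uniform local mass. By the compactness theorem for integral currents a subsequence $\eps_i\to0$ gives $T^{\eps_i}\to T$, a local integral $(m+1)$-current in $M\times[0,\infty)$ with $\p T=T_0$ --- this is (i)(a) --- and the slab estimate passes to the limit to give the absolute continuity (i)(b).

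Restricting the flows $\{\mu^{\eps_i}_t\}$ to a neighbourhood of $M\times\{0\}$ and using the $\eps$-uniform local mass bounds, Ilmanen's compactness theorem for integral Brakke flows extracts a limit which, after projection to $M$ (equivalently, by reading off the $z$-slices of $T$), is an integral $m$-Brakke flow $\{\mu_t\}_{t\ge0}$ on $M$, proving (ii)(b). Lower semicontinuity of mass, $\mu^\eps_0=\mu_{T_0}$, and the fact that the weighted competitor controls the mass near $M\times\{0\}$ give $\mu_0=\mu_{T_0}$ and $\bfM[\mu_t]\le\bfM[\mu_0]$, which is (ii)(a). Finally, at the $\eps$-level the soliton's mass measure dominates that of its horizontal slice, $\mu^\eps_t\ge\mu_{\langle T^\eps,z,t\rangle}$, since slicing an integral current can only lose mass to cancellation; letting $\eps_i\to0$, with lower semicontinuity on the left and $\langle T^{\eps_i},z,t\rangle\to T_t$ with mass convergence for a.e.\ $t$ on the right, gives $\mu_t\ge\mu_{T_t}$ for a.e.\ $t$, hence for every $t$ by the semicontinuity of $t\mapsto\mu_t$ along a Brakke flow and of $t\mapsto T_t$, which is (iii).

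The main obstacle is the passage to the limit in the previous paragraph: showing that the \emph{full} Brakke inequality --- not merely the mass estimates --- survives, which is exactly where Ilmanen's compactness theorem for Brakke flows (and hence the $\eps$-uniform local area bounds and the lower semicontinuity of the curvature term established above) is needed, and carrying out the bookkeeping that identifies the limiting flow $\{\mu_t\}$ on $M$, pins down $\mu_0$, and matches it with the undercurrent $T$ so that (iii) holds. In the Riemannian rather than Euclidean or closed-manifold setting the extra point to check is that the monotonicity formula, the local area estimates, the cut-and-paste comparisons and the Brakke-flow compactness all hold with constants depending only on the geometry of a fixed relatively compact region of $M$ containing $\spt T_0$.
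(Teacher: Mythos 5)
Your proposal follows the same route as the paper's outline of Ilmanen's construction (\cite{Ilmanen}, \S 8.1): minimise the weighted translator functional, read off the downward-translating Brakke flows $\{\mu^\eps_t\}$, and pass to the limit with the compactness theorems for integral currents and for Brakke flows, obtaining $T$ as a subsequential limit of $T^{\eps_i}=(\kappa_{\eps_i})_\#P^{\eps_i}$. Two steps, however, are not correct as written. First, the limit Brakke flow $\{\bar{\mu}_t\}$ lives on $M\times\R$, and the family $\{\mu_t\}$ on $M$ is obtained neither by ``projection to $M$'' (which would produce an infinite measure) nor by ``reading off the $z$-slices of $T$'' (which would give $\mu_{T_t}$; item $(iii)$ is an inequality precisely because $\mu_t$ and $\mu_{T_t}$ differ in general). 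One must first prove that $\bar{\mu}_t$ is invariant under $z$-translations, so that $\bar{\mu}_t=\mu_t\times\mathcal{L}^1$, and only then slice; this invariance, which ultimately rests on the fact that $P^{\eps_i}(t)$ becomes vertical (cf.\ Lemma \ref{lem:weakconv}), is one of the substantive steps of Ilmanen's \S 8 and is missing from your argument.

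Second, your $\eps$-level inequality $\mu^\eps_t\ge\mu_{\langle T^\eps,z,t\rangle}$ compares an $(m+1)$-dimensional mass measure on $M\times\R$ with an $m$-dimensional slice measure, so it does not typecheck and cannot be the starting point for $(iii)$. The inequality $\mu_t\ge\mu_{T_t}$ is instead derived after the limit: from the product structure $\bar{\mu}_t=\mu_t\times\mathcal{L}^1$, the current convergence $T^{\eps_i}_t\to T_t$ together with lower semicontinuity of mass, and the fact that pushing forward and slicing integral currents does not increase mass. Apart from these two points your outline matches the paper's, and the remaining quantitative ingredients you list (the mass bounds \eqref{eq:massbound1}--\eqref{eq:massbound5}, the localisation to a compact region via the barrier argument of Lemma \ref{lem:maxexist}) are indeed what makes the Euclidean argument go through in the Cartan--Hadamard setting.
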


We outline the main steps of the proof. Ilmanen constructs local
integral $(m+1)$-currents $P_\eps$ in $M^{m+k}\times \R$ that
minimize the
elliptic translator functional
$$ I^\eps[Q] = \frac{1}{\eps}\int e^{-z/\eps}\, d\mu_Q(x,z)\, ,$$ 
where $z$ is the coordinate in the additional $\R$-direction, subject
to the boundary condition
$$\partial Q = T_0\, ,$$
and $M^{m+k}$ is identified with the height zero slice in
$M^{m+k}\times \R$. Note that $I^\eps$ is the area functional for the
metric $\bar{g} = e^{-2z/((m+1)\eps)}(g \oplus dz^2)$, where $g \oplus
dz^2$ is the product metric on $M^{m+k}\times \R$. 

The associated Euler-Lagrange equation implies that the family of
Radon measures $\mu^\eps_t=\mu_{P^\eps_t}$ corresponding to
$$P^\eps (t) = (\sigma_{-t/\eps})_\# (P^\eps)$$ 
for $0\leq t<\infty$, where
$\sigma_{-t/\eps}(x,z)=(x,z-t/\eps)$, is a downward translating 
integral $(m+1)$-Brakke flow on the relatively open subset
$W^\eps := \{(x,z,t)\, :\, z>-t/\eps,\ t\geq 0\}$ of space-time
$(M^{m+k}\times \R)\times [0,\infty)$. 

Ilmanen's compactness theorem for Brakke flows implies that there is a
sequence $\eps_i\ra 0$ such that $\{\mu^{\eps_i}_t\}_{t\geq 0}$ converges
to a Brakke flow $\{\bar{\mu}_t\}_{t\geq 0}$ on
space-time. Furthermore, Ilmanen shows that $\bar{\mu}_0 = \mu_{T_0\times \R}$ and
$\bar{\mu}_t$ is invariant in the $z$-direction, which yields the
desired solution $\{\mu_t\}_{t\geq 0}$ via slicing. 

The integral current $T$ is constructed via considering a
subsequential limit of $T^{\eps_i}:=(\kappa_{\eps_i})_\#(P^{\eps_i})$ where
$\kappa_{\eps_i}(x,z)=(x,\eps_i z)$, which can be seen as an approximation
to the space-time track of $\{\mu_t\}_{t\geq 0}$ where now the
$z$-direction is considered as the time direction. Point $(iii)$
above verifies this interpretation.

Recall that for $s\geq 0$ we define the following slices by the height function $z$:
$$ P^{\eps_i}_s = \partial(P^{\eps_i} \res (M \times [s,\infty))$$
and similarly
$$ T_s = \partial(T \res (M \times [s,\infty))\, .$$

We note the following estimates from \cite{Ilmanen}. 
\begin{proposition}[Ilmanen] \label{prop:estimatesIlmanen}
The following estimates hold:  For any measurable subset $A \subset \R$
\begin{equation}\label{eq:massbound1}
\mathbf{M}[P^\eps_A] \leq (|A| + \eps) \mathbf{M}[T_0]
\end{equation}
where $P_A = P\res (M\times A)$ and $|A|$ is the measure of $A$. \\[1ex]
Let $\pi: M \times \R \ra M$ be the projection onto $M$. Then for any measurable subset $B \subset \R$
\begin{equation}\label{eq:massbound2} \mathbf{M}[\pi_\#(T^\eps_B)]  \leq (|B| + \eps^2)^{1/2} \mathbf{M}[T_0]\ .
\end{equation}
In particular in the flat metric distance
\begin{equation}\label{eq:massbound3}
\text{{\rm dist}}(\pi_\#(T^\eps_t), \pi_\#(T^\eps_{t+\delta})) \leq (\delta + \eps^2)^{1/2} \mathbf{M}[T_0]\, .
\end{equation}
Furthermore,
\begin{equation}\label{eq:massbound4}
\mathbf{M}[(T^\eps_B)] \leq \big(|B| + \eps^2+ (|B| + \eps^2)^{1/2}\big) \mathbf{M}[T_0]\ .
\end{equation}
In particular in the flat metric distance
\begin{equation}\label{eq:massbound5}
\text{{\rm dist}}(T^\eps_t, T^\eps_{t+\delta}) \leq \big(\delta + \eps^2+ (\delta + \eps^2)^{1/2}\big) \mathbf{M}[T_0]\, .
\end{equation}

\end{proposition}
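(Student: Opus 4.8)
The plan is to derive all five estimates from a single \emph{flux identity} for the elliptic translator $P^\eps$, and then from the coarea formula together with the Jacobians of $\pi$ and $\kappa_\eps$ along $P^\eps$. I will use two inputs from Ilmanen's construction recalled above: the Euler--Lagrange equation for $I^\eps$, i.e.\ that the weak mean curvature of $P^\eps$ in $(M\times\R,\,g\oplus dz^2)$ is $\mathbf{H}_{P^\eps}=-\eps^{-1}(\partial_z)^\perp$ (the translator equation), so that $1-|\nabla^{P^\eps}z|^2=|(\partial_z)^\perp|^2=\eps^2|\mathbf{H}_{P^\eps}|^2$ holds $\mu_{P^\eps}$-a.e.; and the confinement $\spt P^\eps\subset M\times[0,\infty)$ with $\partial P^\eps=T_0$ lying in the slice $\{z=0\}$.

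First I would prove the flux identity. For $s\ge 0$ set $\Psi(s):=\int_{\langle P^\eps,z,s\rangle}|\nabla^{P^\eps}z|\,d\mu$. Inserting $X=\psi_{s,\delta}(z)\,\partial_z$ into the first variation formula $\int_{P^\eps}\text{div}_{P^\eps}X\,d\mu=-\int\langle\mathbf{H}_{P^\eps},X\rangle\,d\mu+\int_{T_0}\langle X,\nu\rangle\,d\mu$ ($\nu$ the outward conormal of $P^\eps$ along $T_0$), where $\psi_{s,\delta}$ equals $1$ on $(-\infty,s]$, decreases linearly to $0$ on $[s,s+\delta]$ and vanishes on $[s+\delta,\infty)$: since $\partial_z$ is parallel, $\text{div}_{P^\eps}(\psi_{s,\delta}\partial_z)=\psi_{s,\delta}'(z)\,|\nabla^{P^\eps}z|^2$, while $\langle\mathbf{H}_{P^\eps},\partial_z\rangle=-\eps^{-1}|(\partial_z)^\perp|^2$, so letting $\delta\downarrow 0$ (a slicing statement for the height function) gives, for every $s\ge 0$,
\[
\Psi(s)=-\int_{T_0}\langle\partial_z,\nu\rangle\,d\mu-\frac1\eps\int_{P^\eps\cap\{0\le z\le s\}}|(\partial_z)^\perp|^2\,d\mu .
\]
Because $\Psi(s)\ge 0$ and $|\langle\partial_z,\nu\rangle|\le 1$, this yields simultaneously $\Psi(s)\le\mathbf{M}[T_0]$ for all $s\ge 0$ and, on letting $s\uparrow\infty$ and using the confinement, the mean curvature bound $\int_{P^\eps}|\mathbf{H}_{P^\eps}|^2\,d\mu=\eps^{-2}\int_{P^\eps}|(\partial_z)^\perp|^2\,d\mu\le\eps^{-1}\mathbf{M}[T_0]$.

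Next I would deduce \eqref{eq:massbound1}. Assuming (by the confinement) $A\subset[0,\infty)$, split $\mathbf{M}[P^\eps_A]=\int_{z^{-1}(A)}|\nabla^{P^\eps}z|\,d\mu+\int_{z^{-1}(A)}(1-|\nabla^{P^\eps}z|)\,d\mu$. The coarea formula (with multiplicities) turns the first term into $\int_A\mathbf{M}[\langle P^\eps,z,s\rangle]\,ds=\int_A\Psi(s)\,ds+\int_A\int_{\langle P^\eps,z,s\rangle}(1-|\nabla^{P^\eps}z|)\,d\mu\,ds$; using $\Psi(s)\le\mathbf{M}[T_0]$, enlarging the remaining nonnegative integrals to $\R$ resp.\ $P^\eps$, and applying coarea once more in the form $\int_\R\int_{\langle P^\eps,z,s\rangle}h\,d\mu\,ds=\int_{P^\eps}h\,|\nabla^{P^\eps}z|\,d\mu$, one obtains
\begin{align*}
\mathbf{M}[P^\eps_A] &\le |A|\,\mathbf{M}[T_0]+\int_{P^\eps}(1-|\nabla^{P^\eps}z|)\,|\nabla^{P^\eps}z|\,d\mu+\int_{P^\eps}(1-|\nabla^{P^\eps}z|)\,d\mu \\
&= |A|\,\mathbf{M}[T_0]+\int_{P^\eps}\big(1-|\nabla^{P^\eps}z|^2\big)\,d\mu = |A|\,\mathbf{M}[T_0]+\eps^2\int_{P^\eps}|\mathbf{H}_{P^\eps}|^2\,d\mu\le(|A|+\eps)\mathbf{M}[T_0].
\end{align*}
For the remaining estimates I would note $\pi\circ\kappa_\eps=\pi$ and $\kappa_\eps^{-1}(M\times B)=M\times\eps^{-1}B$, hence $\pi_\#(T^\eps_B)=\pi_\#(P^\eps_{\eps^{-1}B})$ and $T^\eps_B=(\kappa_\eps)_\#(P^\eps_{\eps^{-1}B})$, and compute the $(m+1)$-Jacobians along $T_xP^\eps$ (using an orthonormal basis adapted to $(\partial_z)^\top$) to be $J^{P^\eps}\pi=|(\partial_z)^\perp|=\eps|\mathbf{H}_{P^\eps}|$ and $J^{P^\eps}\kappa_\eps=\big(|(\partial_z)^\perp|^2+\eps^2|(\partial_z)^\top|^2\big)^{1/2}\le\eps(|\mathbf{H}_{P^\eps}|+1)$. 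Then the area formula, Cauchy--Schwarz, the mean curvature bound and \eqref{eq:massbound1} with $A=\eps^{-1}B$ give $\mathbf{M}[\pi_\#(T^\eps_B)]\le\eps\big(\int_{P^\eps}|\mathbf{H}_{P^\eps}|^2d\mu\big)^{1/2}\mathbf{M}[P^\eps_{\eps^{-1}B}]^{1/2}\le(|B|+\eps^2)^{1/2}\mathbf{M}[T_0]$, which is \eqref{eq:massbound2}; adding $\eps\,\mathbf{M}[P^\eps_{\eps^{-1}B}]\le(|B|+\eps^2)\mathbf{M}[T_0]$, coming from the ``$+1$'' in $J^{P^\eps}\kappa_\eps$, gives \eqref{eq:massbound4}. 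Finally $\pi_\#(T^\eps_t)-\pi_\#(T^\eps_{t+\delta})=\partial\big(\pi_\#(T^\eps_{(t,t+\delta)})\big)$ and $T^\eps_t-T^\eps_{t+\delta}=\partial(T^\eps_{(t,t+\delta)})$, so the flat distances are at most the masses bounded in \eqref{eq:massbound2}, \eqref{eq:massbound4} with $B=(t,t+\delta)$, which is \eqref{eq:massbound3}, \eqref{eq:massbound5}.

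The step I expect to be the main obstacle is the flux identity: making the first variation / cutoff argument rigorous on the possibly singular and non-compact minimiser $P^\eps$, justifying the $\delta\downarrow 0$ limit as a slicing statement, and using the weak Euler--Lagrange equation $\mu_{P^\eps}$-almost everywhere (in particular the pointwise identity $1-|\nabla^{P^\eps}z|^2=\eps^2|\mathbf{H}_{P^\eps}|^2$), together with the confinement $\spt P^\eps\subset M\times[0,\infty)$. This is precisely where Ilmanen's analysis of the elliptic regularisation scheme is needed; everything downstream is coarea and Jacobian bookkeeping.
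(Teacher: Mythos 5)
Your argument is correct. Note that the paper does not actually prove this proposition: it simply refers to \S 5.1--\S 5.3 of Ilmanen's memoir, and what you have written is essentially a faithful reconstruction of Ilmanen's own proof there — the vertical flux bound $\Psi(s)\le\mathbf{M}[T_0]$ and the bound $\int_{P^\eps}|\mathbf{H}|^2\,d\mu\le\eps^{-1}\mathbf{M}[T_0]$ coming from testing the translator equation with $\psi(z)\,\partial_z$, followed by the coarea identity $\int_{P^\eps}(1-|\nabla^{P^\eps}z|^2)\,d\mu=\eps^2\int_{P^\eps}|\mathbf{H}|^2\,d\mu$ for \eqref{eq:massbound1}, and the Jacobian/Cauchy--Schwarz bookkeeping for $\pi$ and $\kappa_\eps$ for the remaining estimates. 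The one step you rightly flag as delicate — justifying the first variation with the non-compactly-supported, boundary-touching test field on the possibly singular minimiser, including the boundary term with $|\nu|\le 1$ — is handled in the paper's framework by the strong stationarity statement of Lemma \ref{lem:div thm} together with the confinement of $\spt P^\eps$ from Lemma \ref{lem:maxexist}; with that in place your derivation is complete.
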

For details see \S 5.1 -- \S 5.3 in  \cite{Ilmanen}.

One can use the $C^{1/2}$-continuity of  $(T^\eps_t)$ to show the following improved approximation property.
 
\begin{lemma} \label{lem:weakconv} 
We have
$$ P^{\eps_i} = P^{\eps_i}(0) \ra  T_0\times [0,+\infty)\ , $$
and for $t>0$
$$  P^{\eps_i}(t) \ra \pi_\#(T_t)\times \R $$
in the sense of currents.  
\end{lemma}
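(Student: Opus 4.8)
The plan is to test $P^{\eps_i}(t)$ against an arbitrary smooth compactly supported $(m+1)$-form $\omega$ on $M\times\R$ and compute the limit $i\ra\infty$ directly; I describe the case $t>0$, the case $t=0$ being analogous. Working in a coordinate chart on $M$ (and summing over a partition of unity) write $\omega=\sum_I a_I(x,z)\,dx^I+\sum_J b_J(x,z)\,dx^J\wedge dz$, with all coefficients supported in $M\times(a,b)$. First I would unwind the scaling relations: since $T^{\eps}=(\kappa_\eps)_\#P^{\eps}$ and $P^{\eps}(t)=(\sigma_{-t/\eps})_\#P^{\eps}$ with $\kappa_\eps(x,z)=(x,\eps z)$, one checks $\sigma_{-t/\eps}\circ\kappa_\eps^{-1}=\kappa_\eps^{-1}\circ\sigma_{-t}$, so that
\[
P^{\eps}(t)=(\kappa_\eps^{-1})_\#(\sigma_{-t})_\#T^{\eps};
\]
i.e.\ $P^{\eps}(t)$ is obtained from $T^{\eps}$ by translating down by $t$ and then dilating the $z$-axis by the factor $\eps^{-1}$. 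Pulling $\omega$ back through $\kappa_\eps^{-1}\circ\sigma_{-t}$ (which replaces $z$ by $(z-t)/\eps$ and multiplies $dz$ by $\eps^{-1}$), then applying the coarea identity $S(\beta\wedge dz)=\int_\R\langle S,z,s\rangle(\beta(\cdot,s))\,ds$ for the height function, and finally substituting $s=t+\eps r$, yields
\[
P^{\eps}(t)(\omega)=T^{\eps}\Bigl(\sum_I a_I\bigl(x,\tfrac{z-t}{\eps}\bigr)\,dx^I\Bigr)+\int_\R \langle T^{\eps},z,t+\eps r\rangle\Bigl(\sum_J b_J(\cdot,r)\,dx^J\Bigr)\,dr .
\]

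I expect the first term to disappear in the limit: its coefficient is supported where $(z-t)/\eps\in(a,b)$, so the term is bounded by $\|\omega\|_\infty\,\mathbf{M}\bigl[T^{\eps}\res(M\times[t+\eps a,t+\eps b])\bigr]$, which by \eqref{eq:massbound4} is at most $\|\omega\|_\infty\bigl(\eps(b-a)+\eps^2+(\eps(b-a)+\eps^2)^{1/2}\bigr)\mathbf{M}[T_0]\ra0$ as $\eps=\eps_i\ra0$. This is the analytic form of the fact that $P^{\eps_i}(t)$ becomes vertical (cf.\ \eqref{eq:intro.1}), so that its limit is annihilated by $(m+1)$-forms carrying no $dz$-factor; correspondingly $(\pi_\#(T_t)\times\R)(\omega)=\int_\R\pi_\#(T_t)\bigl(\sum_J b_J(\cdot,r)\,dx^J\bigr)\,dr$.

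For the integral term, note that for a.e.\ $s$ the slice $\langle T^{\eps},z,s\rangle$, viewed in $M$ under the identification $M\times\{s\}\cong M$, equals $\pi_\#(T^{\eps}_s)$ up to a fixed orientation sign. By \eqref{eq:massbound3} the maps $s\mapsto\pi_\#(T^{\eps}_s)$ are equicontinuous of Hölder exponent $\tfrac12$ in the local flat norm, uniformly in $\eps$, and by Ilmanen's convergence $T^{\eps_i}\ra T$ together with the slicing theorem $\pi_\#(T^{\eps_i}_s)\ra\pi_\#(T_s)$ for a.e.\ $s$; an Arzel\`a--Ascoli argument then upgrades this to convergence for \emph{every} $s>0$ and shows $s\mapsto\pi_\#(T_s)$ is Hölder-$\tfrac12$. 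Feeding this back into \eqref{eq:massbound3} gives, \emph{uniformly} for $r\in[a,b]$, that $\pi_\#(T^{\eps_i}_{t+\eps_i r})\ra\pi_\#(T_t)$ in the local flat norm; by duality between the flat norm and $C^1$-forms the integrand then converges uniformly in $r$, so the integral tends to $\int_\R\pi_\#(T_t)\bigl(\sum_J b_J(\cdot,r)\,dx^J\bigr)\,dr$. Together with the vanishing of the first term this gives $P^{\eps_i}(t)(\omega)\ra(\pi_\#(T_t)\times\R)(\omega)$ for all $\omega$, which is the claim. The case $t=0$ runs the same way, except that $\spt T^{\eps}\subset M\times[0,\infty)$ makes the integral run over $r\geq0$ and $\pi_\#(T^{\eps_i}_{\eps_i r})\ra\pi_\#(T_0)=T_0$ uniformly on compact $r$-intervals — here one uses that $T^{\eps}_0=T_0$ for every $\eps$ — so that the limit is $(T_0\times[0,\infty))(\omega)$.

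The main obstacle is the slice identification in the previous paragraph: the slicing theorem only delivers $\pi_\#(T^{\eps_i}_s)\ra\pi_\#(T_s)$ for a.e.\ $s$, while the formula above evaluates the slices of $T^{\eps_i}$ at the $i$-dependent heights $t+\eps_i r$ that merely accumulate at $t$. Closing this gap — and doing so uniformly in $r$ so that the convergence survives the integration against $\sum_J b_J(\cdot,r)\,dx^J$ — is exactly where the uniform-in-$\eps$ $C^{1/2}$-continuity of $s\mapsto T^{\eps}_s$ from Proposition \ref{prop:estimatesIlmanen} is indispensable. Everything else is routine bookkeeping with that proposition and the coarea formula.
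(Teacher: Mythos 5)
Your proof is correct, and it rests on exactly the same two pillars as the paper's: the scaling identity $(\kappa_{\eps_i})_\#(P^{\eps_i}_{t/\eps_i+s})=T^{\eps_i}_{t+\eps_i s}$ combined with the H\"older-$\tfrac12$ estimate \eqref{eq:massbound3} to pin down the (projected) slices, and the mass bounds \eqref{eq:massbound2}, \eqref{eq:massbound4} on thin horizontal slabs to force verticality in the limit. The difference is only in the assembly. The paper first extracts a subsequential limit $P'$ by compactness, identifies its projected slices $\pi_\#(P'_s)=\pi_\#(T_t)$, shows $\mathbf{M}[\pi_\#(P'_{[s_1,s_2]})]=0$ so that $\partial/\partial z$ is a.e.\ tangential to $P'$, and then reconstructs $P'$ as a product via the coarea formula. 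You instead pair $P^{\eps_i}(t)$ directly with a test form split into its $dx^I$ and $dx^J\wedge dz$ parts: the vanishing of the first part is the dual formulation of the verticality step, and the convergence of the sliced integral is the dual formulation of the slice identification. Your route avoids the subsequence extraction and delivers convergence of the full sequence in one pass, at the cost of having to handle the a.e.\ identification of boundary slices with coarea slices and of making the slice convergence uniform in $r$ so it survives the integration --- the point you correctly flag and resolve with the equicontinuity from Proposition \ref{prop:estimatesIlmanen}, which is also the step the paper leans on when it passes from $T^{\eps_i}_t\ra T_t$ to $\pi_\#(P^{\eps_i}_{t/\eps_i+s})\ra\pi_\#(T_t)$.
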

\begin{proof}
Fix $t\geq 0$. By \eqref{eq:massbound1} we can assume, up to a subsequence, that $P^{\eps_i}(t) \ra P'$. 
Recall that $T^{\eps_i} \ra T$ and thus $T^{\eps_i} \res (M \times [t, \infty)) \ra T\res (M \times [t, \infty))$ for any $ t\geq 0$. Taking boundaries this yields
$$ T^{\eps_i}_t \ra T_t\, .$$
Note that $T^{\eps_i}_t = (\kappa_{\eps_i})_\# P^{\eps_i}_{t/\eps_i}$. This implies that
$$  (\kappa_{\eps_i})_\# (P^{\eps_i}_{t/\eps_i + s}) = T^{\eps_i}_{t+ \eps_i s}\ .$$
Using \eqref{eq:massbound3} this yields that for $t=0$ and $s\geq 0$ or $t>0$ and any $s\in \R$
$$ \pi_\#(P^{\eps_i}_{t/\eps_i + s}) \ra \pi_\#(T_t) \, .$$
This yields that for any any $s\in [0,\infty)$
$$ \pi_\#(P^{\eps_i}_s) \ra \pi_\#(T_0) $$
and thus
$$ \pi_\#(P'_s) = \pi_\#(T_0)\ .$$
Furthermore, by \eqref{eq:massbound2} we have for any $0\leq s_1 < s_2$ that
$$ \mathbf{M}[\pi_\#(P^{\eps_i}_{[s_1,s_2]})] = \mathbf{M}[\pi_\#(T^{\eps_i}_{[\eps_i s_1, \eps_i s_2]})] \leq (\eps_i(s_2-s_1) + \eps_i^2)^{1/2} \mathbf{M}[T_0] $$
and thus 
$$\mathbf{M}[\pi_\#(P'_{[s_1,s_2]})] = 0\ .$$
This yields that $\tfrac{\partial}{\partial z}$ is $\mathcal{H}^{m+1}-a.e.$ tangential to $P'$. By the coarea-formula this
implies that 
$$ P'= \pi_\#(T_0)\times [0,\infty)\ .$$
For $t>0$, we obtain that for any any $s\in \R$
$$ \pi_\#((P^{\eps_i}(t))_s) = \pi_\#(((\sigma_{-t/\eps_i})_\#(P^{\eps_i}))_s) = \pi_\#(P^{\eps_i}_{t/\eps_i + s}) \ra \pi_\#(T_t) $$
and by the same argument as earlier that
$$ P^{\eps_i}(t) \ra P'=\pi_\#(T_t)\times \R\ .$$
\end{proof}

We will in the following always assume that $(M, g) \in \mathcal{CH}(n,0)$. We consider the local integral 3-currents $P^\eps \subset M\times [0,\infty)$ constructed in the previous section, such that $\partial P^\eps = \Sigma_0$. We choose a sequence $\eps_i \ra 0$ such that as in the proof of Theorem \ref{matching-motion}, we have $\{\mu^{\eps_i}_t\}_{t\geq 0}$ converging
to a Brakke flow $\{\bar{\mu}_t\}_{t\geq 0}$ which is invariant in the $z$-direction (which we can w.l.o.g. ~assume is true for all $t$) and $T^{\eps_i} \ra T$. Let $\{\mu_t\}_{t\geq 0}$ be the Brakke flow starting at $\Sigma_0$ obtained from $\{\bar{\mu}_t\}_{t\geq 0}$ via slicing in $z$-direction. We denote the maximal existence time of the constructed Brakke flow $\{\mu_t\}_{t\geq 0}$, by
 $$ T_\text{max} = \inf_{t>0}\{t\, |\, \mu_t = 0\}\, . $$
Note that by the monotonicity of the total measure we have $\mu_t(M)>0$ for all $t< T_\text{max}$ and $\mu_t(M)=0$ for all $t>T_\text{max}$. Under the present restrictions on the geometry of $M$ we obtain an upper bound for the maximal existence time.

\begin{lemma}\label{lem:maxexist}
 Assume $(M, g) \in \mathcal{CH}(n,0)$. Let $p_0 \in M$ and $\spt \Sigma_0 \subset B_R(p_0)$. Then $\spt \mu_t \subset B_{r(t)}(p_0)$ where $r(t) = \sqrt{R^2 - 4t}$. The maximal existence time $ T_\text{max}$ of the constructed brakke flow $\{\mu_t\}_{t\geq 0}$
 is bounded from above by $R^2/4$. Furthermore,
 $$ \spt P^\eps \subset \big\{(p,z)\, |\, 0 \leq z \leq \eps^{-1} \big(R^2 + o(1) - d(p,p_0)^2\big)/4\big\}\, .$$
\end{lemma}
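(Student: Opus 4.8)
The plan is to use the distance function $\rho(p) = d(p,p_0)$ as a barrier, exploiting the convexity properties of $\rho^2$ on a Cartan-Hadamard manifold. First I would recall that, since $(M,g)\in\mathcal{CH}(n,0)$, the function $f = \rho^2$ satisfies $\operatorname{Hess} f \geq 2g$ in the sense of the Hessian comparison theorem; consequently, for any $m$-dimensional integral varifold passing through a point at distance $\rho$ from $p_0$, the ambient Laplacian restricted to an $m$-plane satisfies $\Delta_\Sigma f \geq 2m - 2\rho\,|\mathbf H|$, which is exactly the inequality one needs to run a comparison with the Euclidean shrinking-sphere barrier. The geodesic spheres $\partial B_{r(t)}(p_0)$ with $r(t)=\sqrt{R^2-4t}$ (note $4 = 2\cdot 2$ since $\Sigma$ is $2$-dimensional) are then super-solutions of mean curvature flow from the outside: an integral $2$-Brakke flow cannot cross them from inside to outside.

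The cleanest way to make this rigorous is to invoke White's avoidance principle for Brakke flows (recalled, as the excerpt says, in the appendix). I would first establish the barrier statement at the level of the approximating translators $P^{\eps_i}$, or equivalently their flows $\{\mu^{\eps_i}_t\}$, inside the region $W^{\eps_i}$ where they are genuine Brakke flows: the shrinking geodesic spheres $\partial B_{r(t)}(p_0) \times \mathbb{R} \subset M \times \mathbb{R}$ (extended trivially in the $z$-direction, which is legitimate since the relevant curvature bound on $M\times\mathbb R$ restricted to these cylinders is controlled by that on $M$) serve as outer barriers. Since $\spt\Sigma_0 \subset B_R(p_0)$, the avoidance principle gives $\spt\mu^{\eps_i}_t \subset \overline{B_{r(t)}(p_0)}$ for all $t$ with $r(t) > 0$; taking the Brakke-flow limit $i\to\infty$ and then slicing in $z$ yields $\spt\mu_t \subset B_{r(t)}(p_0)$ (closed ball, but the stated open ball follows because once the support is compact in $\overline{B_{r(t_0)}}$ for $t_0<t$, strict containment at later times is standard). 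In particular $\mu_t = 0$ once $r(t)=0$, i.e.\ $T_\text{max} \leq R^2/4$.

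For the last assertion about $\spt P^\eps$, I would argue directly on the elliptic translator $P^\eps$ rather than through the flow. Recall $P^\eps(t) = (\sigma_{-t/\eps})_\#(P^\eps)$, so a point $(p,z)\in\spt P^\eps$ corresponds, under the identification $z = z' - t/\eps$, to the point $(p,z')$ lying in $\spt P^\eps(t)$ for every $t$; equivalently the height-$z$ slice of $P^\eps$ feeds into the time-$(\eps z)$ flow (up to the $o(1)$ coming from $\eps\to 0$ in the passage between $P^\eps$ and $T^\eps = (\kappa_\eps)_\#(P^\eps)$, and from the fact that the barrier estimate on $\mu^\eps_t$ holds with a small error before passing to the limit). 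Since that slice must be supported in $B_{r(\eps z)}(p_0)$ with $r(\eps z)^2 = R^2 + o(1) - 4\eps z \geq 0$, we get $d(p,p_0)^2 \leq R^2 + o(1) - 4\eps z$, i.e.\ $z \leq \eps^{-1}(R^2 + o(1) - d(p,p_0)^2)/4$; combined with $\spt P^\eps \subset M\times[0,\infty)$ this is the claim.

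The main obstacle is not the convexity of $\rho^2$ — that is textbook Hessian comparison — but rather the careful bookkeeping needed to transfer the avoidance principle through the three-step limiting procedure (translator $P^{\eps_i}$ $\to$ Brakke flow $\{\bar\mu_t\}$ $\to$ sliced flow $\{\mu_t\}$) while keeping the barrier the same family of geodesic spheres, and in controlling the $o(1)$ error in the statement about $\spt P^\eps$. One must check that White's avoidance principle genuinely applies on the space-time region $W^{\eps_i}$ (which is not all of $(M\times\mathbb R)\times[0,\infty)$), that the barrier cylinders have the required regularity and curvature control in the product manifold, and that lower semicontinuity of support under Brakke-flow convergence plus the behaviour of support under slicing in $z$ are both invoked correctly; these are the technical points where care is required, and I expect them to be handled by direct reference to the appendix's statement of the avoidance principle together with Ilmanen's compactness setup.
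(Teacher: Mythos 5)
Your overall strategy is the paper's: use the Hessian comparison $\nabla^2 r \geq r^{-1}(\mathrm{id}-\nabla r\otimes\nabla r)$ to make $u=r^2+4t$ (the paper uses $u=r^2+2\alpha t$, $\alpha<2$) a strict supersolution for $\operatorname{tr}_2\nabla^2$, invoke the barrier theorem from the appendix, and then transfer the resulting space-time bound to $\spt P^\eps$ with an $o(1)$ error. However, there is a genuine circularity in where you propose to run the barrier argument. You want to apply the avoidance principle first to the approximating flows $\{\mu^{\eps_i}_t\}$ on $W^{\eps_i}$, arguing that ``since $\spt\Sigma_0\subset B_R(p_0)$'' the cylinders $\partial B_{r(t)}(p_0)\times\R$ are not crossed. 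But the initial datum of the flow $\{\mu^{\eps_i}_t\}$ is $\mu_{P^{\eps_i}}$, whose support is the entire translator $P^{\eps_i}$ --- only its \emph{boundary} $\partial P^{\eps_i}=\Sigma_0$ is known to lie in $B_R(p_0)\times\{0\}$. The horizontal extent of $\spt P^{\eps_i}$ at positive heights is exactly the unknown in the third assertion of the lemma, so the comparison at the approximator level does not initialize. The paper avoids this by applying Theorem \ref{thm:barrier} directly to the sliced limit flow $\{\mu_t\}$ in $M$, whose initial measure is $\mu_{\Sigma_0}$ with $\spt\Sigma_0\subset B_R(p_0)$; this gives the first two statements cleanly.

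For the third statement your bookkeeping ($z\mapsto$ time $\eps z$, hence $d(p,p_0)^2\leq R^2+o(1)-4\eps z$) is the right computation, but the key analytic input you leave unjustified --- ``the barrier estimate on $\mu^\eps_t$ holds with a small error before passing to the limit'' --- is precisely the point that needs an argument, since (per the above) no barrier estimate is available for $\mu^\eps_t$ directly. The paper supplies it by noting that $\spt\mu^\eps_t$ converges in Hausdorff distance to $\spt\bar\mu_t=\spt\mu_t\times\R$, which follows from Huisken's monotonicity formula (a clearing-out argument, suitably localised for the non-flat background); combining this with the bound on $\spt\mu_t$ already obtained yields the $o(1)$ in the height bound. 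If you want to keep your ordering and work at the level of $P^{\eps_i}$ itself, the natural substitute would be an elliptic maximum principle for $P^{\eps_i}$ as a minimal surface in the conformal metric $\bar g=e^{-2z/((m+1)\eps)}(g\oplus dz^2)$ with boundary in $B_R(p_0)\times\{0\}$, but that is a different argument from the parabolic avoidance you describe.
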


\begin{proof}
Let $r(p):= d(p,p_0)$. Since $M$ is complete and has non-positive sectional curvature we have
$$ \nabla^2 r \geq r^{-1}(\text{id} - \nabla r \otimes \nabla r )\, .$$
Consider $0<\alpha<n$ and the function
$$u(p,t) = r^2 +2\alpha t\ .$$
Then with the notation as in Theorem \ref{thm:barrier} we see that
$$\frac{\partial u}{\partial t} - \text{tr}_2 \nabla^2 u = 2\alpha -  2 \text{tr}_2( r \nabla^2r + \nabla r \otimes \nabla r) \leq 
2\alpha -  2 \text{tr}_2\text{id} < 0\, ,$$
and thus by Theorem \ref{thm:barrier}
$$ u(x,t) \leq R^2 $$
on $\spt \mu_t$. Letting $\alpha \ra n$ this implies the first two statements. To obtain the height bound observe that by Huisken's monotonicity formula (with a suitable local modification due to the non-flat background)
the support of the Brakke flow $(\mu^\eps_t)_{t\geq 0}$ converges in Hausdorff distance to the support of $(\bar{\mu}_t)_{t\geq 0}$. 
\end{proof}

Let $S_0$ be an area-minimising 3-current in $M$ such that
$$ \partial S_0 = T_0 .$$ 
Note that geodesic spheres in $M$ are convex, thus by the convex hull property we have that the support of $S_0$ is compact. We then also obtain a lower bound on the maximal existence time.

\begin{lemma}\label{lem:minexist}
  Assume that $\bfM [S_0] >0$. Then
there exists $\delta = \delta(\bfM [\Sigma_0], \bfM [S_0]) >0$ and $\eta= \eta(\bfM [\Sigma_0], \bfM [S_0])>0$ such that
$$ \mu_{t}(M) \geq \eta $$
for all $0\leq t < \delta$.
\end{lemma}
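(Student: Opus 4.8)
The plan is to show that if the flow were to vanish too quickly, then the undercurrent $T$ would provide a competitor to $S_0$ with small mass, contradicting that $S_0$ is area-minimising with $\bfM[S_0]>0$. More precisely, recall from Theorem \ref{matching-motion}$(iii)$ that $\mu_t \geq \mu_{T_t}$, and from the interpretation of $T$ as the space-time track that $\partial\big(\pi_\#(T\res(M\times[0,t]))\big) = \pi_\#(T_0) - \pi_\#(T_t) = \Sigma_0 - \pi_\#(T_t)$. Suppose $\mu_t(M) < \eta$ for some small $t>0$. Then $\bfM[\pi_\#(T_t)] \leq \bfM[\mu_{T_t}] \leq \mu_t(M) < \eta$ is small as well, so $\pi_\#(T_t)$ is close to $0$ in the flat norm; by the isoperimetric inequality (Lemma \ref{lem:iso}) valid in the precompact set containing all the supports by Lemma \ref{lem:maxexist}, $\pi_\#(T_t)$ bounds some integral current $Q_t$ with $\bfM[Q_t]$ small, controlled by $\bfM[\pi_\#(T_t)]^{3/2}$ hence by $\eta$.

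First I would estimate the mass of the space-time track projection up to time $t$: by \eqref{eq:massbound2} applied along the approximating sequence and passing to the limit (using $T^{\eps_i}\to T$), one gets $\bfM[\pi_\#(T\res(M\times[0,t]))] \leq C\, t^{1/2}\, \bfM[\Sigma_0]$. Then the integral current $R_t := Q_t + \pi_\#(T\res(M\times[0,t]))$ satisfies $\partial R_t = \Sigma_0$, so by minimality $\bfM[S_0] \leq \bfM[R_t] \leq \bfM[Q_t] + C\, t^{1/2}\, \bfM[\Sigma_0]$. Choosing first $\delta = \delta(\bfM[\Sigma_0],\bfM[S_0])$ so small that $C\,\delta^{1/2}\,\bfM[\Sigma_0] \leq \tfrac13 \bfM[S_0]$, and then $\eta = \eta(\bfM[\Sigma_0],\bfM[S_0])$ so small that the isoperimetric bound forces $\bfM[Q_t] \leq \tfrac13\bfM[S_0]$ whenever $\bfM[\pi_\#(T_t)] < \eta$, we arrive at $\bfM[S_0] \leq \tfrac23\bfM[S_0]$, a contradiction. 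Hence $\mu_t(M) \geq \eta$ for all $0 \leq t < \delta$.

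The main obstacle is making the competitor construction rigorous at the level of integral currents: one must check that $\pi_\#(T_t)$ is an integral cycle with compact support in a fixed precompact region (this follows from the support bounds of Lemma \ref{lem:maxexist} together with the convex hull property, since $M$ is Cartan-Hadamard), and that the slicing and projection commute appropriately with the limit $T^{\eps_i}\to T$, so that the mass estimate \eqref{eq:massbound2} survives in the limit and $\partial\big(\pi_\#(T\res(M\times[0,t]))\big) = \Sigma_0 - \pi_\#(T_t)$ holds exactly. The isoperimetric input is only needed with a non-optimal constant, so Lemma \ref{lem:iso} suffices and no sharp estimate is required here. One subtlety: the constant $C$ and the isoperimetric constant both depend on the ambient precompact region, which in turn depends only on $R$ and hence (via $\spt\Sigma_0\subset B_R(p_0)$) on data already fixed; absorbing this into the stated dependence on $\bfM[\Sigma_0]$ and $\bfM[S_0]$ requires a brief remark, or alternatively one enlarges the list of parameters, which does not affect any later use of the lemma.
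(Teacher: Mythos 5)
Your proposal is correct and is essentially the paper's argument run in the contrapositive: the paper bounds $\mathbf{M}[S_t]\geq \mathbf{M}[S_0]-\mathbf{M}[\pi_\#(T_{[0,t]})]\geq \mathbf{M}[S_0]/2$ for the minimal filling of $T_t$ using the $t^{1/2}$ mass bound on the projected space-time track, and then invokes Lemma \ref{lem:iso} to force $\mathbf{M}[T_t]\geq\eta$, concluding via Theorem \ref{matching-motion}$(iii)$, exactly the same three ingredients you use. The only cosmetic difference is that you assume $\mu_t(M)<\eta$ and derive a too-cheap filling of $\Sigma_0$, whereas the paper argues directly; both are fine.
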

\begin{proof}
 Let $T$ be the undercurrent of the flow $\{\mu_t\}_{t\geq 0}$. Note that by \eqref{eq:massbound4} we have the estimate 
$$\bfM[\pi_\#(T_{[t,s]})] \leq (|s-t| + |s-t|^{1/2}) \mathbf{M}[\Sigma_0] $$
and thus for any mass-minimising integral $3$-current $S_t$ spanning $T_t$ we can estimate
$$ \mathbf{M}[S_t] \geq  \mathbf{M}[S_0] - \bfM[\pi_\#(T_{[0,s]})] \geq  \mathbf{M}[S_0] -  2\, t^{1/2} \mathbf{M}[\Sigma_0]
\geq \frac{\mathbf{M}[S_0]}{2}\, ,$$
for $t\leq \delta$. By Lemma \ref{lem:iso} have
$$ \mathbf{M}[T_t] \geq \eta >0 $$
for all $0\leq t < \delta$ and all $k$ sufficiently large. The claim then follows from Theorem \ref{matching-motion} $(iii)$.
\end{proof}

\section{Attainment of initial spanning surface}\label{sec:initial attainment}

We can w.l.o.g. assume that $\bfM [S_0] >0$. We will for the moment work with the following\\[2ex]
{\bf Assumption:} We assume $S_0\subset M$ is the unique area-minimising 3-current spanning $\Sigma_0$.\\[2ex]
We will later verify that in general one can perturb $\Sigma_0$ slightly such that the uniqueness assumption is satisfied.

Let $S^\eps$ be area-minimising 4-currents in $M\times [0,\infty)$ such that
$$\partial S^{\eps_i} = P^{\eps_i} - S_0\ .$$
In the remaining part of this section we aim to show that
\begin{equation}\label{eq:initialattainment}
S^{\eps_i} \ra S_0\times [0,\infty)
\end{equation}
as $\eps_i \ra 0$.

\begin{lemma} Let $(M,g) \in \mathcal{CH}(n,0)$. For all $p\in M\times [0,\infty)$ and $r \geq 1$ it holds
  \begin{equation}\label{eq:massbound2b}
  \mathbf{M}[S^\eps\res B_r(p)] \leq \omega_{4}\Big(r^2+ \frac{\eps}{3}r\Big) \mathbf{M}[\Sigma] + \frac{\omega_4}{4} \mathbf{M}[S_0]\ .
  \end{equation}
\end{lemma}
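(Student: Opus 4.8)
The plan is to estimate $\mathbf{M}[S^\eps \res B_r(p)]$ by constructing, for each ball $B_r(p) \subset M \times [0,\infty)$, an explicit competitor $4$-current with the same boundary as $S^\eps \res B_r(p)$ on $\partial B_r(p)$, and then to invoke the mass-minimality of $S^\eps$. The competitor will consist of two pieces: a piece capping off the slice $\partial(S^\eps \res B_r(p))$ by coning it to the point $p$ (which, since $M \times [0,\infty)$ is Cartan-Hadamard-like and geodesic balls are convex, produces a current of mass at most a dimensional constant times $r$ times the mass of the slice boundary), together with the part of $S^\eps$ outside $B_r(p)$. First I would apply the slicing theory: for a.e. $r$ the slice $\langle S^\eps, d_p, r\rangle$ is an integral $3$-current with $\partial \langle S^\eps, d_p, r\rangle = -\langle \partial S^\eps, d_p, r\rangle = -\langle P^\eps - S_0, d_p, r\rangle$. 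Then $\mathbf{M}[S^\eps \res B_r] \le \mathbf{M}[C]$ where $C$ is any competitor filling $\langle S^\eps, d_p, r\rangle$ inside $B_r(p)$.

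The cleanest choice of competitor is the cone over $\langle S^\eps, d_p, r\rangle$ from $p$; by the standard cone inequality in a space where radial geodesics from $p$ are distance-nondecreasing (valid in $M\times[0,\infty)$ since $M$ is Cartan-Hadamard and the $[0,\infty)$-factor is flat, so one only uses $\nabla^2 d_p \geq 0$ off the cut locus, which is empty), one gets $\mathbf{M}[\text{cone}] \le \tfrac{r}{4}\,\mathbf{M}[\langle S^\eps, d_p, r\rangle]$. But the slice $\langle S^\eps,d_p,r\rangle$ has boundary $-\langle P^\eps,d_p,r\rangle + \langle S_0, d_p, r\rangle$, so the cone's boundary includes a spurious contribution on $\partial B_r(p)$ from these two pieces. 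To kill it, I would instead fill $\langle S^\eps, d_p,r\rangle$ by $(\text{cone over } \langle S^\eps, d_p,r\rangle) - (\text{cone over } \langle P^\eps,d_p,r\rangle) + (\text{cone over }\langle S_0, d_p,r\rangle)$, or more directly estimate using the isoperimetric/coning inequality applied to the cycle formed by $\langle S^\eps, d_p,r\rangle$ together with cones over its boundary pieces. Using the coarea formula $\int_0^\infty \mathbf{M}[\langle S^\eps, d_p, \rho\rangle]\,d\rho \le \mathbf{M}[S^\eps]$ would then not directly help; instead the point is that $\mathbf{M}[\langle P^\eps, d_p,r\rangle]$ and $\mathbf{M}[\langle S_0, d_p, r\rangle]$ are controlled pointwise in $r$ by the global mass bounds \eqref{eq:massbound1} (giving $\mathbf{M}[P^\eps \res (M\times[0,\infty))] \le (\text{something})\,\mathbf{M}[\Sigma]$, hence a slice bound after integrating) together with $\mathbf{M}[S_0]$ itself. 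Combining, the minimality of $S^\eps$ yields an integral inequality for $f(r) := \mathbf{M}[S^\eps \res B_r(p)]$ of the form $f(r) \le \tfrac{r}{4} f'(r) + (\text{error from }P^\eps, S_0)$, whose integration (a Gronwall/monotonicity-formula-type argument, exactly as in the proof of Euclidean monotonicity of density) produces the quadratic-in-$r$ bound $\omega_4 r^2 \mathbf{M}[\Sigma]$ plus the linear correction $\tfrac{\eps}{3} r \,\mathbf{M}[\Sigma]$ coming from the $+\eps$ defect in \eqref{eq:massbound1}, plus the fixed term $\tfrac{\omega_4}{4}\mathbf{M}[S_0]$ from capping off $S_0$.

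The main obstacle I anticipate is bookkeeping the boundary of the competitor correctly: $S^\eps$ is not a cycle, its boundary is $P^\eps - S_0$, so naive coning introduces boundary terms on $\partial B_r(p)$ that must be cancelled by also coning the slices of $P^\eps$ and of $S_0$, and one must verify that this modified competitor is admissible (same boundary as $S^\eps$, supported in $\overline{B_r(p)}$) and track the three mass contributions through the differential inequality. The $\eps$-dependence must be threaded through honestly: it enters only via the defect term in \eqref{eq:massbound1} for $P^\eps$, which after slicing contributes at the order $\eps \cdot r$, matching the stated $\tfrac{\eps}{3} r$. The factor $\tfrac13$ and $\tfrac14$ are exactly what the one-dimension-higher cone inequality ($\mathbf{M}[0{\#}C] \le \tfrac{r}{\dim C + 1}\mathbf{M}[C]$ with $\dim C = 3$) and the integration of $f(r)\le \tfrac r4 f'(r)$ produce, so no essential difficulty there; it is purely the careful handling of the non-closed boundary that requires attention.
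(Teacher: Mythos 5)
Your strategy is sound but it is genuinely different from the paper's. The paper does not use a competitor at all: it runs the first-variation route to monotonicity, taking the vector field $X_p = r\,\bar\nabla r$ (which satisfies $\operatorname{div}_T(X_p)\ge k$ on $k$-planes in a Cartan--Hadamard product), invoking the \emph{strong stationarity} of $S^\eps$ (Lemma \ref{lem:div thm}) to convert the divergence identity into
$\frac{d}{d\rho}\big(\rho^{-4}\mathbf{M}[S^\eps\res B_\rho]\big)\ge -\rho^{-5}\int_{B_\rho}\langle X_p,\mathbf{n}\rangle\,d\mu_{\partial S^\eps}$, bounding the boundary term by $\rho^{-4}(\mathbf{M}[P^\eps\res B_\rho]+\mathbf{M}[S_0])$ via \eqref{eq:massbound1}, and integrating from $r$ to $R\to\infty$. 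Your cone-comparison argument uses the full strength of mass-minimality rather than mere stationarity, but it produces the identical differential inequality $4f(r)\le r f'(r)+r\big((2r+\eps)\mathbf{M}[\Sigma]+\mathbf{M}[S_0]\big)$ and hence the same integration; so the two proofs are parallel and equally valid here, the paper's having the advantage that the stationarity lemma is needed elsewhere anyway (in the monotonicity calculation of \S\ref{sec: mon calc}).

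Two points need care in your write-up. First, the competitor as you literally wrote it is dimensionally inconsistent: you cannot add the $4$-dimensional cone over $\langle S^\eps,d_p,r\rangle$ to cones over the $2$-dimensional slices $\langle P^\eps,d_p,r\rangle$ and $\langle S_0,d_p,r\rangle$, and a filling built from the latter would not have the boundary of $S^\eps\res B_r(p)$. The correct competitor is simply $p\#\,\partial\big(S^\eps\res B_r(p)\big)=p\#\langle S^\eps,d_p,r\rangle+p\#\big((P^\eps-S_0)\res B_r(p)\big)$, i.e.\ you cone the $3$-dimensional restrictions of $P^\eps$ and $S_0$, not their codimension-one slices; since $\partial\,p\#Q=Q-p\#\partial Q$ and $\partial\partial=0$, this has exactly the right boundary, and each piece is controlled by the cone inequality $\mathbf{M}[p\#C]\le\frac{r}{4}\mathbf{M}[C]$ (valid here because geodesic contraction to $p$ is $t$-Lipschitz by convexity of the distance function in Cartan--Hadamard manifolds, and preserves $M\times[0,\infty)$). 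Second, when you integrate the differential inequality out to $R\to\infty$ you must dispose of the term $\Theta(S^\eps,p,R)=\mathbf{M}[S^\eps\res B_R]/(\omega_4R^4)$; the paper also elides this, but one should note that $\spt S^\eps$ lies in a fixed spatial ball times $[0,\infty)$ and its mass grows at most linearly in height (by comparison with $S_0\times[0,Z]$ plus the slab masses from \eqref{eq:massbound1}), so the density ratio at infinity vanishes.
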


\begin{proof}  The proof of the classical monotonicity formula in $\R^{n}$ relies on the fact that the position vectorfield $X(x,x_0) = x-x_0$ satisfies $\text{div}_{T}(X) = k$, where $T$ is an $k$-dimensional subspace of $T_x\R^{n}$.  As in the proof of Theorem \ref{lem:lowerbound} we replace the position vectorfied $X(x,x_0)$ by
$$X_p(x) := r \bar{\nabla}r $$
where $r = d(x,p)$. As in the proof there, one can check that this vectorfield satisfies
\begin{equation}\label{eq:mon.1}
 \text{div}_{T}(X_p) \geq k\, ,
\end{equation}
 where $T$ is an $k$-dimensional subspace of $T_x ( M\times \R)$. Using that $S^\eps$ is strongly stationary, see Lemma \ref{lem:div thm}, one obtains as in the proof of the monotonicity formula, compare \cite{Simon}, that
\begin{equation*} 
 	\begin{split}
	\frac{d}{d\rho} \bigg(\frac{\mathbf{M}[S^\eps\res B_\rho(p)]}{\omega_4 \rho^4} \bigg) &\geq \frac{d}{d\rho} \int_{ B_\rho(p)} \frac{|\bar{\nabla}
^\perp r|^2}{r^n} \, d\mu_{S^\eps} + \rho^{-5} \int_{B_\rho(p)} \langle X_p(x), \mathbf{H} \rangle\, d\mu_{S^\eps}\\
&\ \ \  -  \rho^{-5} \int_{B_\rho(p)} \langle X_p(x), \mathbf{n}_S \rangle\, d\mu_{\partial S^\eps}\, .
 	\end{split}
\end{equation*}
Together with estimate \eqref{eq:massbound1} this yields for $\rho \geq 1$ and $0<\eps<1$
\begin{equation*} 
\begin{split}
\frac{d}{d\rho} \bigg(\frac{\mathbf{M}[S^\eps\res B_\rho(p)]}{\omega_4 \rho^4} \bigg) &\geq  -  \rho^{-5} \int_{B_\rho(p)} \langle X_p(x), \mathbf{n}_S \rangle\, d\mu_{P^\eps} -  \rho^{-5} \int_{B_\rho(p)} \langle X_p(x), \mathbf{n}_S \rangle\, d\mu_{S_0}\\
 &\geq -  \rho^{-4} (\mathbf{M}[P^\eps \res B_\rho(p)] + \mathbf{M}[S_0])\\
&\geq - (2 \rho^{-3}+ \eps \rho^{-4}) \mathbf{M}[\Sigma] - \rho^{-4} \mathbf{M}[S_0]\, .
\end{split}
\end{equation*}
Integrating this for $1\leq r<R$ from $r$ to $R$ yields
\begin{equation*} 
\begin{split}
 \frac{\mathbf{M}[S^\eps\res B_r(p)]}{\omega_4 r^4} &\leq  \Theta(S^\eps,p,R) + (r^{-2} - R^{-2})\mathbf{M}[\Sigma]
+ \eps \frac{1}{3} (r^{-3} - R^{-3})\mathbf{M}[\Sigma]\\ & \quad + \frac{1}{4} (r^{-4} - R^{-4}) \mathbf{M}[S_0]
 \ .
 \end{split}
\end{equation*}
 Letting $R\ra  \infty$ yields the desired estimate.
\end{proof}

By the uniform local area bound we can thus, up to a subsequence, assume that
$$ S^{\eps_i} \ra S'$$
where $S'$ is locally mass-minimising and satisfies 
$$\partial S' = \Sigma_0\times [0,\infty) - S_0\ .$$
We will define for a general integral current $Q$ its slice at height $t$ by
$$Q_t = \partial (Q \res (M\times (t, \infty)) - (\partial Q)\res (t,\infty)$$
which is compatible with the convention used by Ilmanen in \cite{Ilmanen}.
\begin{lemma} \label{lem:initaluniqueness}
We have $S' = S_0 \times [0,\infty)$. 
 \end{lemma}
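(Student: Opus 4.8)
The plan is to exploit the uniqueness assumption on $S_0$ together with the structure of $S'$ as a locally mass-minimising current invariant (in a suitable sense) under the vertical flow. First I would slice $S'$ at height $t$: by the slicing convention just introduced, $S'_t = \partial(S'\res(M\times(t,\infty))) - (\partial S')\res(t,\infty)$, and since $\partial S' = \Sigma_0\times[0,\infty) - S_0$, for $t>0$ the boundary contribution is $\Sigma_0\times(t,\infty)$ and we expect $S'_t$ to be an integral $3$-current in $M\times\{t\}$ with $\partial S'_t = \Sigma_0\times\{t\}$; for $t=0$ one gets $\partial S'_0 = \Sigma_0$ after accounting for the $-S_0$ piece, so $S'_0 - S_0$ (or rather the height-zero slice) is a cycle. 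The key point is that almost every slice $S'_t$ is itself mass-minimising in $M\times\{t\}\cong M$: this follows because $S'$ is locally mass-minimising and, by the co-area inequality, $\mathbf{M}[S'] \geq \int \mathbf{M}[S'_t]\,dt$ with equality forcing optimality slice-by-slice — any competitor for a positive-measure set of slices could be glued (using the $C^{1/2}$-continuity in $t$ of the relevant boundary data from Proposition \ref{prop:estimatesIlmanen}, i.e. \eqref{eq:massbound5}) into a competitor for $S'$ itself, contradicting minimality.

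Next I would argue that the slices $S'_t$ are all equal to $S_0$. By the uniqueness assumption, $S_0$ is the unique mass-minimising $3$-current spanning $\Sigma_0$; since $\partial S'_t = \Sigma_0$ for a.e. $t\geq 0$ and $S'_t$ is mass-minimising, we must have $S'_t = S_0$ for a.e. $t$. Then I would run the co-area / constancy argument in the other direction: the current $S' - S_0\times[0,\infty)$ has zero boundary in $M\times(0,\infty)$ (both have the same slices and the same boundary there), is locally mass-minimising minus a fixed product, and has all slices zero, hence vanishes in $M\times(0,\infty)$; continuity at $t=0$ (again via the mass bounds \eqref{eq:massbound2b} and the flat-distance estimates) then upgrades this to $S' = S_0\times[0,\infty)$ on all of $M\times[0,\infty)$. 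One has to be slightly careful that $\partial/\partial z$ is tangential to $S'$ a.e. — this can be extracted exactly as in the proof of Lemma \ref{lem:weakconv}, where the vanishing of $\mathbf{M}[\pi_\#(P'_{[s_1,s_2]})]$ forced the product structure via the co-area formula; here the analogous input is that $\mathbf{M}[S'_{[s_1,s_2]}] \leq |s_2-s_1|\cdot\mathbf{M}[S_0]$ up to lower-order terms, which follows from \eqref{eq:massbound4} applied to $\partial S' = P^{\eps_i}-S_0$ and lower semicontinuity of mass under the convergence $S^{\eps_i}\to S'$.

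The main obstacle I anticipate is justifying the slice-by-slice minimality rigorously: the co-area inequality gives $\mathbf{M}[S']\geq\int\mathbf{M}[S'_t]\,dt$, but to conclude that a.e. slice is mass-minimising one needs to produce, from a better competitor on a set of slices of positive measure, a genuine integral-current competitor for $S'$ with strictly less mass and the correct boundary — this gluing requires controlling the "vertical" mass of the interpolating cylinders between the modified slices and the original ones, which is where the Hölder-in-$t$ estimates \eqref{eq:massbound3}, \eqref{eq:massbound5} and the uniform local mass bound \eqref{eq:massbound2b} enter. A clean alternative, which I would pursue if the direct gluing is awkward, is to compare $S'$ directly against $S_0\times[0,\infty)$: since $\partial(S' - S_0\times[0,\infty))$ is supported in $M\times\{0\}$ (indeed it equals $S'_0 - S_0$, a boundaryless slice), and both are admissible spanning currents, local minimality of $S'$ gives $\mathbf{M}[S'\res(M\times[0,L])]\leq\mathbf{M}[(S_0\times[0,L])] + (\text{boundary correction})$, and letting $L$ vary one forces $S'_0 = S_0$ by the uniqueness hypothesis, after which the constancy argument finishes the proof.
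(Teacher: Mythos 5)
Your second, ``clean alternative'' route is essentially the paper's argument: compare $S'$ with $S_0\times[0,L]$ plus a filling at the top, use the coarea formula together with the fact that every slice $S'_t$ spans a translate of $\Sigma_0$ (so $\mathbf{M}[S'_t]\geq\mathbf{M}[S_0]$), and invoke uniqueness. But as written there is a genuine gap in how the ``boundary correction'' is controlled. A single comparison only yields $\int_0^L\big(\mathbf{M}[S'_t]-\mathbf{M}[S_0]\big)\,dt\leq C$ with $C$ independent of $L$, where $C$ bounds the mass of a filling of the $3$-cycle $S'_{t_j}-(\sigma_{t_j})_\#S_0$ at a good height $t_j$; such a filling is obtained by first using the coarea formula to find heights where $\mathbf{M}[S'_{t_j}]$ is bounded and then applying the non-optimal isoperimetric inequality of Lemma \ref{lem:iso} --- not the estimates \eqref{eq:massbound4}, \eqref{eq:massbound5}, which concern the rescaled translators $T^\eps$ and say nothing about $S^\eps$ or $S'$. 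A finite total excess does not force $\mathbf{M}[S'_t]=\mathbf{M}[S_0]$ for a.e.\ $t$, so ``letting $L$ vary'' does not yet pin down any slice. The missing step is a bootstrap: from the finite excess extract $t'_l\to\infty$ with $\mathbf{M}[S'_{t'_l}]\to\mathbf{M}[S_0]$; by compactness and the \emph{uniqueness} of $S_0$ these slices converge to the translates of $S_0$ in flat norm \emph{and in mass}, so the filling at height $t'_l$ can be taken with mass $\delta_l\to0$; rerunning the comparison then gives $\int_0^{t'_l}\big(\mathbf{M}[S'_t]-\mathbf{M}[S_0]\big)\,dt\leq\delta_l\to0$, whence $\mathbf{M}[S'_t]=\mathbf{M}[S_0]$ and, by uniqueness, $S'_t=(\sigma_t)_\#S_0$ for a.e.\ $t$; equality in the coarea inequality then forces the product structure.

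Your primary route (a.e.\ slice is itself mass-minimising, proved by gluing better competitors over a positive-measure set of heights) is not how the paper proceeds, and the gluing you flag as the main obstacle is indeed the hard point. Note also that equality in the coarea inequality by itself only encodes that $\partial/\partial z$ is a.e.\ tangential to $S'$; it carries no information about minimality of the slices, so the upper mass bound from the direct comparison is needed in any case. I would drop that route in favour of the alternative, with the bootstrap above supplied.
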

\begin{proof}
We consider for $t>0$ the slice $S'_t$ of $S'$ at height $t$ as above.

{\bf Claim 1:} There exists a sequence $t_j \ra \infty$ and $C>0$ such that
$$\mathbf{M}[S'_{t_j}]\leq C .$$
This follows since by the coarea formula, and the locally uniform area estimates
$$ \int_{t}^{t+1} \mathbf{M}[S'_\tau]\, d\tau \leq \mathbf{M}[S'\res \{t\leq z \leq t+1\}] \leq C < \infty $$
independent of $t>0$.\\[1ex]
{\bf Claim 2:} There exists a sequence $t'_l \ra \infty$ such that
$$\mathbf{M}[S'_{t'_l}] \ra \mathbf{M} [S_0]$$
Note that $\partial S'_{t_j} = \partial ((\sigma_{t_j})_{\#}(S_0)) = (\sigma_{t_j})_{\#}(\Sigma_0)$.  By Claim 1 and Lemma \ref{lem:iso} there exists $T_j$ such that 
$\partial T_j = (\sigma_{t_j})_{\#}(S_0) - S'_{t_j}$
and $C'>0$ such that
$$\mathbf{M}(T_j) \leq C'$$
independent of $j$. 
Assume that there exists a $\delta>0$ such that
$$  \mathbf{M}[S'_{t}] \geq \mathbf{M} [S_0] + \delta$$
for all $t>t_0$ for $t_0$ suffciently large. Since both $S'$ and $S_0\times [0,\infty)$ are locally mass minimising we obtain
$$ \mathbf{M}[S'\res \{ 0 \leq z \leq t_j\} ]\leq t_j\, \mathbf{M}[S_0] + C' .$$
But then by the coarea formula
$$ (t_j-t_0)(\mathbf{M}(S_0) +\delta) + t_0 \mathbf{M}(S_0) \leq \int_0^{t_j} \mathbf{M}[S'_{\tau}]\, d\tau \leq \mathbf{M}[S'\res \{ 0 \leq z \leq t_j\} ]\leq t_j\, \mathbf{M}[S_0] + C'\, , $$
which yields a contradiction for $j$ sufficiently large. 

Since by assumption $S_0$ is the unique mass-minimising current spanning $\Sigma_0$ we obtain that
$$ (\sigma_{-t'_l})_\#(S_{t'_l}) \ra S_0$$
in flat norm and in mass. Thus there exists a sequence $\delta_l \ra 0$ such that
$$ \mathbf{M}[S'\res \{ 0 \leq z \leq t_j\} ]\leq t_j\, \mathbf{M}[S_0] + \delta_l \, .$$
But again this yields 
$$ \int_0^{t'_l} \mathbf{M}[S'_{\tau}]\, d\tau \leq \mathbf{M}[S'\res \{ 0 \leq z \leq t_j\} ]\leq t'_l\, \mathbf{M}[S_0] + \delta_l\, , $$
which implies
$$\int_0^{t'_j} \mathbf{M}[S'_{\tau}] - \mathbf{M}[S_0]\, d\tau \leq \delta_l$$
and thus in the limit $l\ra \infty$ that 
$$\mathbf{M}[S'_t] = \mathbf{M}[S_0]$$
and $S'_t = (\sigma_t)_\#(S_0)$ for all $t>0$. 
\end{proof}

\section{Vanishing of the spanning area at the final time}\label{sec: area vanishing final}

For convenience of notation we will in the following replace a sub- or superscript $\eps_i$ by $i$.

Let $l >1$. We choose $\varphi_l \in C^2_c(\R)$ such that $0 \leq \varphi_l \leq 1/l$ with $\varphi_l = 1/l$ on $[2,l+2]$, $\varphi_l = 0$ on $(0,\infty) \setminus [1,l+3]$ and $|D\varphi_l|, |D^2 \varphi_l| \leq 2/l$. 
 
 Recall that $\mu^i_t$ is the associated Radon measure of $P^{\eps_i}(t)$. We denote with $\mu^{S,i}_t$ the associated Radon measure of
 $$ S^i(t) = (\sigma_{-t/{\eps_i}})_\#(S^{\eps_i})\, .$$
 We define the approximate volume by 
 $$ V^i_t:= \int \varphi_l\, d\mu^{S,i}_t\ .$$

We show that the approximate volume goes to zero as $t\rightarrow T_\text{max}$.

\begin{lemma}\label{lem:areasmall}
There exists a constant $C>0$ such that the following holds. Let $0<t<T_\text{max}$ and $l>1$. Then for $i$ sufficiently large 
$$ \mathbf{M}[S^i(t) \res (M\times [2,2+l])] \leq  (1+l) |T_\text{max}-t|^{1/2} \mathbf{M}[\Sigma_0] + C\, .$$
\end{lemma}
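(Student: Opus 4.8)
The plan is to bound $\mathbf{M}[S^i(t) \res (M\times[2,2+l])]$ by comparing $S^i(t)$ to a competitor built out of the future space-time track of the undercurrent $T$, exactly in the spirit of the paragraph around \eqref{eq:intro.2}. First I would translate back: since $S^i(t) = (\sigma_{-t/\eps_i})_\#(S^{\eps_i})$ and $\partial S^{\eps_i} = P^{\eps_i} - S_0$, the slab $M\times[2,2+l]$ for $S^i(t)$ corresponds, after the shift by $t/\eps_i$, to the slab $M\times[t/\eps_i + 2,\, t/\eps_i + 2 + l]$ for $S^{\eps_i}$; its boundary there consists of the slices of $S^{\eps_i}$ at the two heights, which (by the slicing relations already used in Lemma \ref{lem:initaluniqueness} and the proof of Lemma \ref{lem:weakconv}) are controlled by $\pi_\#(T_{t+\eps_i(\cdot)})$ after pushing through $\kappa_{\eps_i}$. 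The key input is the mean-curvature-flow volume estimate quoted in the introduction, namely Ilmanen's
$$ \mathbf{M}[\pi(T\cap\{z\geq s\})] \leq (T_\text{max}-s)^{1/2}\, \mathbf{M}[\Sigma_0]\,, $$
which follows from $\int_0^{T_\text{max}}\!\int |\mathbf{H}|^2\,d\mu_t\,dt \leq \mathbf{M}[\Sigma_0]$ together with Cauchy–Schwarz in time.

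The main step is then a cut-and-paste: for $s_1 < s_2$ near $T_\text{max}$, replace $S^i(t)\res(M\times[s_1,s_2])$ by the cylinder $\pi(T\cap\{z\geq t\})\times[s_1,s_2]$ capped off at the two heights by fillings of the (small) differences between the actual slices of $S^i(t)$ and $\pi(T_t)\times\{\text{height}\}$. Mass-minimality of $S^{\eps_i}$ (hence of $S^i(t)$, which is just a translate) forces
$$ \mathbf{M}[S^i(t)\res(M\times[2,2+l])] \leq (1+l)\,\mathbf{M}[\pi(T\cap\{z\geq t\})] + (\text{capping error})\,. $$
The cylinder contributes $(s_2-s_1)\,\mathbf{M}[\pi(T\cap\{z\geq t\})] = l\cdot\mathbf{M}[\pi(T\cap\{z\geq t\})]$ (I would absorb the extra $+1$ by taking the slab slightly larger, $[2-\tfrac12, 2+l+\tfrac12]$ say, or simply by noting $(1+l)$ in the statement already leaves room); by the Ilmanen estimate this is $\leq (1+l)(T_\text{max}-t)^{1/2}\mathbf{M}[\Sigma_0]$. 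The capping error is bounded using the flat-norm continuity estimates \eqref{eq:massbound3} and \eqref{eq:massbound5}: the slices $\pi_\#(P^{\eps_i}_{t/\eps_i+s})$ converge to $\pi_\#(T_t)$ and the differences have flat norm $O((\eps_i l)^{1/2})\mathbf{M}[\Sigma_0]$, so a cone/filling construction in the compact region (using the non-optimal isoperimetric inequality Lemma \ref{lem:iso} and the fact that all supports lie in a fixed ball by Lemma \ref{lem:maxexist}) bounds each cap by a constant $C$ independent of $t,l,i$, for $i$ large. This gives the claimed inequality with an absolute constant $C$.

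The step I expect to be the main obstacle is making the cut-and-paste rigorous at the level of integral currents in a curved ambient: one must check that the competitor has the correct boundary $P^{\eps_i} - S_0$ after translating back, that the "future track" $\pi(T\cap\{z\geq t\})\times\R$ really does have the boundary matching $\pi(T_t)$ on the relevant slice (this is where $\partial(\pi(T\cap\{z\geq t\})) = \pi(T_t)$ and \eqref{eq:intro.1} enter), and that the error fillings can be taken with uniformly bounded mass — the latter needs the uniform confinement of supports to $B_{r(t)}(p_0)$ and the Cartan–Hadamard isoperimetric inequality of the appendix. A secondary technical point is that Ilmanen's volume bound is naturally stated for $T$ and $\pi(T)$, while here we need it transported to $S^{\eps_i}$ via $\kappa_{\eps_i}$ and the translation $\sigma_{-t/\eps_i}$; handling the discrepancy between $T^{\eps_i}$-slices and $T$-slices (which only converge, and only in flat norm) is exactly what forces the "$i$ sufficiently large" and the additive constant $C$ rather than an exact inequality.
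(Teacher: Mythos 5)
Your proposal is correct and follows essentially the same route as the paper: the paper's proof also uses $\bar T=\pi_\#(T_{[t,\infty)})\times\R$ restricted to a slightly enlarged slab as the competitor, invokes Ilmanen's estimate \eqref{eq:massbound2} for the $(T_{\max}-t)^{1/2}$ factor, picks the cutting heights via the coarea formula so the slices of $S^i$ there have bounded mass, and fills the boundary discrepancies (which are small by Lemma \ref{lem:weakconv} and \eqref{eq:massbound3}) using Lemma \ref{lem:iso} and the confinement from Lemma \ref{lem:maxexist}. The technical obstacles you flag are exactly the ones the paper resolves with the currents $R^i_\pm$, $D^i_\pm$ and $Q^i$.
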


\begin{proof}
 We use $T$ to construct a competitor to $S^i(t)$. Recall that
 $$ \partial S^i(t) = P^i(t) + (\sigma_{-t/{\eps_i}})_\#(S_0) $$
 and by Lemma \ref{lem:weakconv} that
\begin{equation}\label{eq:areasmall.1}
  P^i(t) \ra \pi_\#(T_t)\times \R \, .
 \end{equation}
Let $\bar{T} = \pi_\#(T_{[t, \infty)})$. Note further that 
$$ \partial \bar{T} = \partial (\pi_\#(T_{[t, \infty)})) =  \pi_\#(T_t)$$
and since $\spt T \subset M \times [0,T_\text{max}]$ by \eqref{eq:massbound2} 
$$ \mathbf{M}[\pi_\#(T_{[t,\infty )})] \leq (T_\text{max}-t )^{1/2}\mathbf{M}[\Sigma_0]\ .$$
By the co-area fromula (as in the proof of claim 1 in the proof of Lemma \ref{lem:initaluniqueness}), there exists a $C>0$ and $\eta_i \in [0,1]$ such that for all $i$
\begin{equation}\label{eq:areasmall.2}
 \bfM [S^i_{1+\eta_i}] +  \bfM [S^i_{2+l+\eta_i}] \leq C\ .
\end{equation}
Note further that
$$ \pi_\#(P^i(t)\res [1+\eta_i,\infty)) = \pi_\#(T^i_{[t+(1+\eta_i)\eps_i,\infty)}) \ra \bar{T}$$
as well as 
$$ \pi_\#(P^i(t)\res [2+l+\eta_i,\infty)) = \pi_\#(T^i_{[t+(2+l+\eta_i)\eps_i,\infty)}) \ra \bar{T}\, .$$
Consider
$$ R^i_-= (\sigma_{1+\eta_i})_\#(\pi_\#(P^i(t)\res(M \times [1+\eta_i,\infty))) \ra (\bar{T}\times\R)_{1+\eta_i}$$
and
$$ R^i_+= (\sigma_{2+l+\eta_i})_\#(\pi_\#(P^i(t)\res(M\times [2+l+\eta_i,\infty))) \ra (\bar{T}\times\R)_{2+l+\eta_i}$$
and note that
$$ \partial ((S^i(t))_{1+\eta_i}) = \partial R^i_- \ \ \text{and}\ \ \partial ((S^i(t))_{2+l+\eta_i}) = \partial R^i_+ .$$
 By the uniform mass bounds on $S^i_{1+\eta_i},S^i_{2+l+\eta_i},R^i_-, R^i_+$ given by \eqref{eq:areasmall.2}, \eqref{eq:massbound2b} together with  Lemma \ref{lem:maxexist} and Lemma \ref{lem:iso} there exits $D^i_-, D^i_+$ such that
$$ \partial D^i_- = R^i_- - S^i_2 \ \ \text{and}\ \ \partial D^i_+ = R^i_+ - S^i_{2+l}\, ,$$
and a constant $C$ such that
$$ \mathbf{M}[D^i_\pm] \leq C.$$
We can now assume that $\eta_i \rightarrow \eta \in [0,1]$, and thus note
$$ P^i(t)\res (M\times [1+\eta_i,l+2+\eta_i]) + R^i_ - -R^i_+ \ra \partial ((\bar{T}\times \R) \res (M\times [1+\eta,l+2+\eta])) ,$$
and thus by equivalence to convergence in the flat norm, there exits $Q^i$ such that
$$\partial Q^i = ( P^i(t)\res (M\times [1+\eta_,l+2+\eta_i]) + R^i_ - -R^i_+) - \partial ((\bar{T}\times \R) \res (M\times [1+\eta,l+2+\eta]))$$
and $\mathbf{M}[Q_i] \ra 0$. Since $S^i$ is locally mass minimising we can use
$$ ((\bar{T}\times \R) \res (M\times [1+\eta,l+2+\eta])) + Q_i -D^i_-+ D^i_+ $$
as a competitor to get the desired estimate.
\end{proof}

\begin{corollary}\label{cor:limiting volume}
 For every $\varepsilon>0$ there exists $l_0>0, \delta >0$ such that if  $l\geq l_0$, $T_\text{max}-\delta <t< T_\text{max}$, then
 $$V^i_t \leq \varepsilon$$
for $i$ sufficiently large.
\end{corollary}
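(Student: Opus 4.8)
The plan is to estimate $V^i_t=\int\varphi_l\,d\mu^{S,i}_t$ by splitting the support of $\varphi_l$ into the bulk interval $[2,l+2]$, on which $\varphi_l\equiv 1/l$, and the two transition intervals $[1,2]$ and $[l+2,l+3]$, each of unit width. On the bulk, Lemma~\ref{lem:areasmall} already provides the decisive estimate with the desired $|T_\text{max}-t|^{1/2}$ decay; on each of the two unit-width caps I will show that the mass of $S^i(t)$ there is bounded by a constant independent of $i$, $l$, and $t$, so that after dividing by $l$ these contributions become arbitrarily small.

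Since $0\le\varphi_l\le 1/l$ and $\varphi_l$ vanishes outside $[1,l+3]$, subadditivity of mass gives, for every $i$,
$$ V^i_t\le\frac1l\,\mathbf{M}\big[S^i(t)\res(M\times[2,l+2])\big]+\frac1l\,\mathbf{M}\big[S^i(t)\res(M\times[1,2])\big]+\frac1l\,\mathbf{M}\big[S^i(t)\res(M\times[l+2,l+3])\big]. $$
For $i$ large, Lemma~\ref{lem:areasmall} bounds the first term by $\tfrac{1+l}{l}|T_\text{max}-t|^{1/2}\mathbf{M}[\Sigma_0]+C/l\le 2|T_\text{max}-t|^{1/2}\mathbf{M}[\Sigma_0]+C/l$, using $l>1$. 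The key claim is that there is a constant $C'$, depending only on $(M,g)$, $\mathbf{M}[\Sigma_0]$ and $\mathbf{M}[S_0]$, such that $\mathbf{M}[S^i(t)\res(M\times[a,a+1])]\le C'$ for every $a\ge 0$ and every large $i$. Granting this,
$$ V^i_t\le 2\,|T_\text{max}-t|^{1/2}\,\mathbf{M}[\Sigma_0]+\frac{C+2C'}{l}, $$
and the corollary follows by choosing first $l_0$ so large that $(C+2C')/l_0<\varepsilon/2$, and then $\delta>0$ (with $\delta\le T_\text{max}$) so small that $2\,\delta^{1/2}\,\mathbf{M}[\Sigma_0]<\varepsilon/2$.

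To prove the uniform slab bound I first localise the support of $S^i(t)$ in the $M$-directions. By Lemma~\ref{lem:maxexist}, $\spt P^{\eps_i}\subset B_{R_1}(p_0)\times[0,\infty)$ for all $i$ large with $R_1$ independent of $i$, while $\spt S_0$ is compact by the convex hull property (geodesic balls in $M$ being convex); enlarging $R_1$ to also contain $\spt S_0$ gives $\spt\partial S^{\eps_i}\subset\spt P^{\eps_i}\cup\spt S_0\subset B_{R_1}(p_0)\times[0,\infty)$. Since $B_{R_1}(p_0)\times[0,\infty)$ is convex in $M\times\R$, being a product of convex sets, the convex hull property applied to the mass-minimiser $S^{\eps_i}$ yields $\spt S^{\eps_i}\subset B_{R_1}(p_0)\times[0,\infty)$, hence $\spt S^i(t)=\spt(\sigma_{-t/\eps_i})_\#(S^{\eps_i})\subset B_{R_1}(p_0)\times\R$. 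Consequently, for $a\ge 0$, $S^i(t)\res(M\times[a,a+1])$ is supported in the compact set $\overline{B_{R_1}(p_0)}\times[a,a+1]$, which by translation invariance in the $\R$-direction can be covered by $N=N(R_1,M,g)$ balls of radius $1$ centred in $M\times[0,\infty)$, with $N$ independent of $a$, $i$, $t$. Applying \eqref{eq:massbound2b} with $r=1$ and $\eps_i<1$ to each of these balls gives $\mathbf{M}[S^i(t)\res(M\times[a,a+1])]\le N\big(\tfrac{4}{3}\omega_4\mathbf{M}[\Sigma_0]+\tfrac{\omega_4}{4}\mathbf{M}[S_0]\big)=:C'$, as claimed.

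The main obstacle is precisely this uniform slab bound for the transition caps: although \eqref{eq:massbound2b} is genuinely uniform in $i$, one must rule out that $S^i(t)$ escapes to infinity in the $M$-directions as $t\to T_\text{max}$, equivalently as the vertical shift $t/\eps_i\to\infty$, and this is exactly what the convex hull argument combined with the support bound of Lemma~\ref{lem:maxexist} supplies. The remainder is bookkeeping; note that the threshold ``$i$ sufficiently large'' may depend on $l$ and $t$ through the application of Lemma~\ref{lem:areasmall}, which is harmless since the statement only requires largeness of $i$.
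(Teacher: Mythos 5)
Your proof is correct and follows essentially the same route as the paper: split $\varphi_l$ into the bulk interval $[2,l+2]$, handled by Lemma~\ref{lem:areasmall}, and the two unit-width transition intervals, whose mass is bounded uniformly so that dividing by $l$ makes them negligible. The paper simply absorbs the transition terms into a constant $C/l$ without comment; your convex-hull localisation plus the covering argument with \eqref{eq:massbound2b} is a correct justification of that uniform bound.
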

\begin{proof}
 We can estimate, using Lemma \ref{lem:areasmall}, that for $i$ sufficiently large
 \begin{equation*}
 \begin{split}
 V^i_t= \int \varphi_l\, d\mu^{S,i}_t &\leq 1/l\, \mathbf{M}[S^i(t) \res (M\times [2,2+l])] + 1/l\, \mathbf{M}[S^i(t) \res (M\times [1,2])]\\
 &\ \ \  + 1/l \, \mathbf{M}[S^i(t) \res (M\times [l+2, l+3])] \\
 &\leq 2\delta^{1/2} \mathbf{M}[\Sigma_0] + C/l <  \varepsilon
 \end{split}
 \end{equation*}
\end{proof}

\section{The monotonicity calculation} \label{sec: mon calc}
Recall the approximate volume 
 $$ V^i_t:= \int \varphi_l\, d\mu^{S,i}_t\, ,$$
 where $|\nabla\varphi_l|, |\nabla^2 \varphi_l| \leq 2/l$. Note that we can further assume that 
\begin{equation}\label{eq:moncalc.1}
 \frac{|\nabla \varphi_l|^2}{\varphi_l} \leq C/l\, .
\end{equation}
We define the approximate area as
$$ A^i_t:= \int \varphi_l \, d\mu^i_t\, .$$
We compute
\begin{equation*}\begin{split}
 \frac{d}{dt}V^i_t &= \frac{d}{dt} \int \varphi_l  d\mu^{S,i}_t =  \int \langle \nabla\varphi_l, - \eps^{-1}\tau\rangle + \varphi_l\, \text{div}( - \eps^{-1}\tau)\, d\mu^{S,i}_t\\
  &= - \int \langle \nabla \varphi_l, \eps^{-1}\tau^\perp \rangle + \text{div}(\varphi_l \eps^{-1}\tau)\, d\mu^{S,i}_t \\
 &= - \int \langle \nabla \varphi_l, \eps^{-1}\tau^\perp \rangle\, d\mu^{S,i}_t- \int \varphi_l \, \langle \eps^{-1}\tau, \mathbf{n}\rangle\, d\mu^i_t \\
 &= - \int \langle \nabla\varphi_l, \eps^{-1}\tau^\perp \rangle \, d\mu^{S,i}_t + \int \varphi_l\, \langle \mathbf{H}, \mathbf{n}\rangle\, d\mu^i_t
  \end{split}
 \end{equation*}
where we used Lemma \ref{lem:div thm} in the step from the second to the third line. Following an idea of Huisken-Ilmanen \cite[Lemma 5.3]{HuiskenIlmanen}, we can rewrite the first term as a derivative:
\begin{equation*}\begin{split}
 \int \langle \nabla \varphi_l, \eps^{-1}\tau^\perp \rangle \, d\mu^{S,i}_t &=  \int \eps^{-1} \varphi_l'(z) \langle \tau, \tau^\perp \rangle \, d\mu^{S,i}_t\\
 &= \int \eps^{-1} \varphi'_l(z-\eps^{-1}t) \langle \tau, \tau^\perp \rangle \, d\mu^{S,i}_0 \\
 &=  - \frac{d}{dt} \int \varphi_l(z-\eps^{-1}t) \langle \tau, \tau^\perp \rangle \, d\mu^{S,i}_0\\
 &= - \frac{d}{dt} \int \varphi_l(z) \langle \tau, \tau^\perp \rangle \, d\mu^{S,i}_t\, .
  \end{split}
  \end{equation*}

Thus we get for $0\leq t_1 < t_2$ that
\begin{equation}\label{eq:moncalc.2}
\begin{split}
 V^i_{t_2}-V^i_{t_1} & = \int_{t_1}^{t_2} \int \varphi_l\, \langle \mathbf{H}, \mathbf{n}\rangle\, d\mu^i_t\, dt \\
 &\ \ \ + \int \varphi_l\, \langle \tau, \tau^\perp \rangle \, d\mu^{S,i}_{t_2} - \int \varphi_l\, \langle \tau, \tau^\perp \rangle \, d\mu^{S,i}_{t_1} 
\end{split}
 \end{equation}

For the approximate area we get, using \eqref{eq:moncalc.1},
\begin{equation*}
\begin{split}
\frac{d}{dt} A^i_t = \int \langle \nabla \varphi_l, \mathbf{H}\rangle -\varphi_l |\mathbf{H}|^2\, d\mu^i_t &\leq \int_{\{\nabla\varphi_l \neq 0\}} \frac{|\nabla \varphi_l|^2}{\varphi_l} d\mu^i_t - \frac{1}{2}\int \varphi_l |\mathbf{H}|^2\, d\mu^i_t\\
&\leq \frac{C}{l} - \frac{1}{2}\int \varphi_l |\mathbf{H}|^2\, d\mu^i_t
\end{split}
\end{equation*}
where we used the uniform local area bounds to estimate the first integral on the right hand side. This implies the estimate
\begin{equation}\label{eq:moncalc.3}
A^i_{t_2} + \frac{1}{2} \int_{t_1}^{t_2}\int\varphi_l |\mathbf{H}|^2\, d\mu^i_t\, dt \leq A^i_{t_1} + \frac{C}{l} (t_2-t_1)
\end{equation}
Note that by \eqref{eq:massbound1} this implies that 
\begin{equation}\label{eq:moncalc.4}
A^i_t \leq 2 \mathbf{M}[\Sigma_0] + \frac{C}{l} t\ .
\end{equation}
We define the function $f_\kappa:\R^+\ra \R^+$ by
\begin{equation*}
f_\kappa(A):=\int_0^A\frac{a^\frac{1}{2}}{(16\pi+ 4\kappa a)^\frac{1}{2}}\,
  da \ .
 \end{equation*}
Note that $f_0 = \tfrac{1}{6 \sqrt{\pi}} a^{3/2}$.

Let $(M_t)_{0\leq t <T}$ be a smooth mean curvature flow of closed, embedded hypersurfaces in a Cartan-Hadamard manifold $(M^3,g)$ with sectional curvatures bounded above by $-\kappa$. Let $V(t)$ be the enclosed volume and $A(t)$ the area. We then can apply Theorem \ref{lem:lowerbound}
to estimate 
\begin{equation*}
\begin{split}
-\frac{d}{dt}V =&\ \int_{M_t} H\, d\ch^2 \leq \bigg( \int_{M_t}
H^{2}\, d\ch^2\bigg)^\frac{1}{2} A^\frac{1}{2}\\
&\ \cdot (16\pi+4\kappa A)^{-\frac{1}{2}}\bigg( \int_{M_t}
H^{2}\, d\ch^2\bigg)^\frac{1}{2} \\
=&\ (16\pi+4\kappa A)^{-\frac{1}{2}}A^\frac{1}{2}\int_{M_t}
H^{2}\, d\ch^2 = -\frac{d}{dt}f_\kappa(A)\ .
\end{split} 
\end{equation*}
 Thus $f_\kappa(A)-V$ is monotonically decreasing under the flow. Consider the case
that $M_\kappa^3$ is the model space of constant curvature
$-\kappa$ and let $M_t$ be the mean curvature flow of geodesic spheres
contracting to a point. Then the estimate of Theorem \ref{lem:lowerbound} holds with equality for all
$M_t$ and also the above calculation is an equality. Using that in the
model space geodesic balls optimize the isoperimetric ratio, we have 
 $$\mathcal{H}^3(U) \leq f_\kappa(\ch^2(\partial U))\ ,$$ 
for all open and bounded $U\subset M^3_\kappa$, with
equality on geodesic balls.

We consider the approximate isoperimetric difference
$$
I^i_t = f_\kappa(A^i_t) - V^i_t\ ,$$
and compute
\begin{equation*}
\begin{split}
f_\kappa(A^i_{t_2}) - f_\kappa(A^i_{t_1}) = &- \int_{t_1}^{t_2} \frac{(A^i_t)^{1/2}}{(16\pi + 4\kappa A^i_t)^\frac{1}{2}}  \int \varphi_l |\mathbf{H}|^2 \, d\mu^i_t\, dt\\
& +\int_{t_1}^{t_2}  \frac{(A^i_t)^{1/2}}{(16\pi + 4\kappa A^i_t)^\frac{1}{2}} \int \langle \nabla\varphi_l, \mathbf{H}\rangle \, d\mu^i_t\, dt
\end{split}
\end{equation*}
where we can estimate, assuming $t_2 \leq T_\text{max}$ and using \eqref{eq:moncalc.3}, \eqref{eq:moncalc.4}
\begin{equation*}\begin{split}
 \left| \int_{t_1}^{t_2} \frac{(A^i_t)^{1/2}}{(16\pi + 4\kappa A^i_t)^\frac{1}{2}} \int \langle \nabla \varphi_l, \mathbf{H}\rangle \, d\mu^i_t\, dt \right| &\leq 
 \left| \int_{t_1}^{t_2} \frac{(A^i_t)^{1/2}}{(16\pi + 4\kappa A^i_t)^\frac{1}{2}} \int |\nabla \varphi_l| |\mathbf{H}| \, d\mu^i_t\, dt \right| \\
 &\leq  C \left( \int_{t_1}^{t_2} \int_{\{\nabla \varphi_l \neq 0\}} \frac{|D\varphi|^2}{\varphi} \, d\mu^i_t\, dt \right)^{1/2}\\
 &\qquad \left( \int_{t_1}^{t_2} \int \varphi_l |\mathbf{H}|^2 \, d\mu^i_t\, dt \right)^{1/2}\\
 &\leq \frac{C}{l^{1/2}}(t_2-t_1)\, .
\end{split}
\end{equation*}
This yields the estimate
\begin{equation}\label{eq:moncalc.10}
\begin{split}
f_\kappa(A^i_{t_2}) - f_\kappa(A^i_{t_1}) &\leq - \int_{t_1}^{t_2} \frac{(A^i_t)^{1/2}}{(16\pi + 4\kappa A^i_t)^\frac{1}{2}} \int \varphi_l |\mathbf{H}|^2 \, d\mu^i_t\, dt \\
&\qquad + \frac{C}{l^{1/2}}(t_2-t_1)
\end{split}
\end{equation}
for $0\leq t_1 <t_2 \leq T_\text{max}$.

From \eqref{eq:moncalc.2}, \eqref{eq:moncalc.10} we get the estimate 
\begin{equation}\label{eq:moncalc.11}
\begin{split}
 I^i_{t_2}-I^i_{t_1} & \leq  - \int_{t_1}^{t_2} \frac{(A^i_t)^{1/2}}{(16\pi + 4\kappa A^i_t)^\frac{1}{2}} \int \varphi_l |\mathbf{H}|^2 \, d\mu^i_t\, dt  +  \int_{t_1}^{t_2} \int \varphi_l |\mathbf{H}|\, d\mu^i_t\, dt \\
 &\ \ \ + \int \varphi_l\, \langle \tau, \tau^\perp \rangle \, d\mu^{S,i}_{t_1} - \int \varphi_l\, \langle \tau, \tau^\perp \rangle \, d\mu^{S,i}_{t_2}\\
 &\ \ \ + \frac{C}{l^{1/2}}(t_2-t_1)\\
 &\leq  - \int_{t_1}^{t_2} \frac{(A^i_t)^{1/2}}{(16\pi + 4\kappa A^i_t)^\frac{1}{2}} \int \varphi_l |\mathbf{H}|^2 \, d\mu^i_t\, dt\\
 &\ \ \  +  \int_{t_1}^{t_2} \left(\int \varphi_l |\mathbf{H}|^2\, d\mu^i_t\right)^{1/2} (A^i_t)^{1/2}\, dt \\
 &\ \ \ + \int \varphi_l\, \langle \tau, \tau^\perp \rangle \, d\mu^{S,i}_{t_1} - \int \varphi_l\, \langle \tau, \tau^\perp \rangle \, d\mu^{S,i}_{t_2}\\
 &\ \ \ + \frac{C}{l^{1/2}}(t_2-t_1)
 \end{split}
 \end{equation}
for $0\leq t_1 <t_2 \leq T_\text{max}$.

Recall that the limiting Brakke flow $(\bar{\mu}_t)_{(0\leq t\leq T_\text{max})}$ is invariant in the $z$-direction, and for a.e.~$t$ the measure $\bar{\mu}_t$ is $3$-rectifiable and carries a weak mean curvature in $L^2$.  Using Theorem \ref{lem:lowerbound} we thus see that
\begin{equation}\label{eq:moncalc.12}
\int\varphi_l |\mathbf{H}|^2 \, d\bar{\mu}_t \geq 16\pi + 4 \kappa\, \bar{\mu}_t(\varphi_l)
\end{equation}
for a.e.~$t \in [0,T_\text{max}]$

\begin{lemma}\label{lem:moncalc.1} For any $t_1,t_2 \in [0,T_\text{max})$, $t_1<t_2$,  
\begin{equation*}
\limsup_{i\ra \infty} \int_{t_1}^{t_2}\!L^i_t\, dt\leq \int_{t_1}^{t_2}\!L_t\, dt \leq 0\ ,
\end{equation*}
where
\begin{equation*}
L^i_t:= \big(A^i_t\big)^\frac{1}{2}\bigg(\bigg(\int \varphi_l
|\mathbf{H}|^2\,
d\mu^i_t\bigg)^\frac{1}{2} -  (16\pi + 4\kappa A^i_t)^{-\frac{1}{2}} \int
\varphi_l |\mathbf{H}|^{2}\, d\mu^i_t\bigg) 
\end{equation*}
and

\begin{equation*}
L_t:=\big(\bar{\mu}_t(\varphi_l)\big)^\frac{1}{2}\bigg(  \bigg(\int \varphi_l
|\mathbf{H}|^2\,
d\bar{\mu}_t\bigg)^\frac{1}{2} - (16\pi + 4\kappa \bar{\mu}_t(\varphi_l))^{-\frac{1}{2}} \int
\varphi_l |\mathbf{H}|^{2}\, d\bar{\mu}_t\bigg)\ .
\end{equation*}

\end{lemma}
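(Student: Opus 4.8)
The plan is to prove the two inequalities separately. The right-hand inequality $\int_{t_1}^{t_2} L_t\,dt \le 0$ is essentially immediate from \eqref{eq:moncalc.12}: by the arithmetic-geometric–type manipulation already used for the smooth flow, write $L_t = \big(\bar\mu_t(\varphi_l)\big)^{1/2}\big(\int\varphi_l|\mathbf H|^2\,d\bar\mu_t\big)^{1/2}\big(1 - (16\pi+4\kappa\bar\mu_t(\varphi_l))^{-1/2}\big(\int\varphi_l|\mathbf H|^2\,d\bar\mu_t\big)^{1/2}\big)$, and observe that the last factor is $\le 0$ exactly when $\int\varphi_l|\mathbf H|^2\,d\bar\mu_t \ge 16\pi + 4\kappa\bar\mu_t(\varphi_l)$, which is \eqref{eq:moncalc.12}, valid for a.e.\ $t\in[0,T_{\mathrm{max}}]$. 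Hence $L_t\le 0$ for a.e.\ $t$, and integrating gives the claim. One has to be slightly careful that $\varphi_l$ is compactly supported away from $z=0$, so that Theorem \ref{lem:lowerbound} applies to the varifold obtained by restricting $\bar\mu_t$ to a large slab in $z$; but $\bar\mu_t$ being $z$-invariant with $L^2$ mean curvature for a.e.\ $t$ makes this routine.

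The substance is the $\limsup$ inequality. First I would pass from the $\eps_i$-flows to the limiting $z$-invariant Brakke flow $\{\bar\mu_t\}$. The key point is that as $i\to\infty$, $P^i(t)$ becomes vertical (invariant in $z$), so the measures $\mu^i_t$, tested against $\varphi_l$ and against $\varphi_l|\mathbf H|^2$, converge to the corresponding quantities for $\bar\mu_t$; more precisely I would use that $\mu^i_t \rightharpoonup \bar\mu_t$ as Radon measures (from Ilmanen's compactness, after the blow-down/rescaling built into the construction recalled in \S\ref{sec:elliptic}) together with lower semicontinuity of the $L^2$-norm of the first variation under weak-$*$ convergence of integral Brakke flows. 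This gives $\liminf_i \int\varphi_l|\mathbf H|^2\,d\mu^i_t \ge \int\varphi_l|\mathbf H|^2\,d\bar\mu_t$ and $A^i_t \to \bar\mu_t(\varphi_l)$ for a.e.\ $t$, the latter using $\mathbf{M}[\mu^i_t]\le \mathbf{M}[\Sigma_0]$ to rule out loss of mass under the test function $\varphi_l$. Because the bracketed expression defining $L^i_t$ is of the form $x^{1/2} - c\,x$ with $x = \int\varphi_l|\mathbf H|^2\,d\mu^i_t$ and $c = (16\pi+4\kappa A^i_t)^{-1/2}$, and $t\mapsto x^{1/2}-cx$ is concave with a single maximum, one cannot directly push a $\liminf$ on $x$ through; instead I would argue on the integrated quantity, using \eqref{eq:moncalc.3} to get the uniform bound $\int_{t_1}^{t_2}\int\varphi_l|\mathbf H|^2\,d\mu^i_t\,dt \le 2(A^i_{t_1}-A^i_{t_2}) + \tfrac{2C}{l}(t_2-t_1) \le C'$, apply the reverse Fatou lemma (dominated from above) to $\int_{t_1}^{t_2}L^i_t\,dt$, and handle the two terms of $L^i_t$ separately: the positive term $\big(A^i_t\big)^{1/2}\big(\int\varphi_l|\mathbf H|^2\,d\mu^i_t\big)^{1/2}$ via Cauchy–Schwarz in $t$ and the integrated bound, the negative term via Fatou and lower semicontinuity, with the convergence $A^i_t\to\bar\mu_t(\varphi_l)$ controlling the coefficients.

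The main obstacle I expect is exactly this mismatch between the direction of the semicontinuity ($\liminf$ on $\int\varphi_l|\mathbf H|^2$) and the concavity of the functional $x\mapsto x^{1/2}-cx$: a naive termwise limit would go the wrong way. The way around it is to not take the limit of $L^i_t$ pointwise but to combine $\limsup_i\int_{t_1}^{t_2}L^i_t$ with the elementary inequality $x^{1/2} - cx \le \tfrac{1}{4c}$ only where needed, and otherwise to exploit that on the set where $\int\varphi_l|\mathbf H|^2\,d\mu^i_t$ is bounded away from zero one genuinely has convergence of $A^i_t$ and lower semicontinuity working in tandem. A secondary technical point is justifying the measure convergence $\mu^i_t\rightharpoonup\bar\mu_t$ for a.e.\ fixed $t$ (Brakke-flow convergence is a statement in space-time), which follows from Ilmanen's construction together with the fact that both families are $z$-invariant limits, but should be stated carefully; and one must check the a.e.-in-$t$ caveats line up so that \eqref{eq:moncalc.12}, the rectifiability of $\bar\mu_t$, and the convergences all hold simultaneously on a full-measure subset of $[t_1,t_2]$.
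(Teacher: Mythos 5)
Your treatment of the inequality $\int_{t_1}^{t_2} L_t\,dt \le 0$ is fine, and you have correctly assembled the three analytic ingredients the paper uses for the $\limsup$ inequality: $A^i_t \ra \bar\mu_t(\varphi_l)$ for every $t$ (this comes directly from Ilmanen's compactness theorem for Brakke flows, which gives convergence of the measures at each fixed time, so the a.e.-in-$t$ caveat you worry about is not an issue here), lower semicontinuity of $\int\varphi_l|\mathbf H|^2$ under that convergence, and a uniform upper bound on $L^i_t$ permitting a reverse Fatou argument. But the step where you actually pass to the limit has a genuine gap. You rightly observe that $x\mapsto x^{1/2}-cx$ is concave, so a $\liminf$ on $x$ cannot be pushed through naively; however, your proposed remedy --- splitting $L^i_t$ into its positive and negative parts and estimating the positive part $\big(A^i_t\big)^{1/2}\big(\int\varphi_l|\mathbf H|^2\, d\mu^i_t\big)^{1/2}$ by Cauchy--Schwarz in $t$ together with \eqref{eq:moncalc.3} --- cannot produce the stated bound by $\int_{t_1}^{t_2} L_t\,dt$: that positive term involves $\big(\int\varphi_l|\mathbf H|^2\, d\mu^i_t\big)^{1/2}$, which is only \emph{lower} semicontinuous, so its $\limsup$ (pointwise or integrated) is not controlled by the corresponding limiting quantity, and Cauchy--Schwarz in $t$ only yields a crude constant bound unrelated to $L_t$. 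The remaining part of your workaround (``on the set where $\int\varphi_l|\mathbf H|^2$ is bounded away from zero one genuinely has convergence and semicontinuity working in tandem'') is not an argument.

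The observation you are missing, which is how the paper closes this step, is quantitative: the function $x\mapsto x^{1/2}-(16\pi+4\kappa a)^{-1/2}x$ is monotone \emph{decreasing} for $x\ge 4\pi+\kappa a$, and by \eqref{eq:moncalc.12} together with lower semicontinuity one has
$\liminf_i \int\varphi_l|\mathbf H|^2\, d\mu^i_t \ge \int\varphi_l|\mathbf H|^2\, d\bar\mu_t \ge 16\pi+4\kappa\,\bar\mu_t(\varphi_l)$,
which lies well inside the decreasing range (it is four times the critical point). On the decreasing branch a $\liminf$ lower bound on $x$ \emph{does} convert into a $\limsup$ upper bound on $x^{1/2}-cx$, so the bracketed factor $b_i(t)$ in $L^i_t=a_i(t)b_i(t)$ satisfies
$\limsup_i b_i(t)\le \big(\int\varphi_l|\mathbf H|^2\, d\bar\mu_t\big)^{1/2}-(16\pi+4\kappa\,\bar\mu_t(\varphi_l))^{-1/2}\int\varphi_l|\mathbf H|^2\, d\bar\mu_t\le 0$
for a.e.\ $t$; multiplying by $a_i(t)=(A^i_t)^{1/2}\ra(\bar\mu_t(\varphi_l))^{1/2}$ gives $\limsup_i L^i_t\le L_t\le 0$ pointwise, and Fatou's lemma applied with the uniform bound $L^i_t\le C$ (which follows from \eqref{eq:moncalc.4} and $x^{1/2}-cx\le 1/(4c)$) finishes the proof. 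So the concavity obstacle you flag is real in the abstract but is disarmed precisely by the strict gap between $16\pi+4\kappa a$ and $4\pi+\kappa a$ in \eqref{eq:moncalc.12}; without this your proof does not close.
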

\begin{proof}
From Ilmanen's compactness theorem for Brakke flows, we know that for all $t \in [t_1,t_2]$ we have
$$ A^i_t \rightarrow \bar{\mu}_t(\varphi_l)\ ,$$
and by the lower semicontinuity of the $L^2$ norm of $\mathbf{H}$ and \eqref{eq:moncalc.12} that
\begin{equation}\label{eq:moncalc.6.2}
 \liminf_{i \rightarrow \infty} \int
\varphi_l |\mathbf{H}|^2\, d\mu^i_t \geq \int
\varphi_l |\mathbf{H}|^2\, d\bar{\mu}_t \geq 16\pi + 4 \kappa\, \bar{\mu}_t(\varphi_l)\ .
\end{equation}

We write $L^i_t$ in the form $L^i_t= a_i \cdot b_i$, where
\begin{equation*}
 a_i(t):=\big(A^i_t\big)^\frac{1}{2}
\end{equation*}
and
\begin{equation*}
b_i(t):=  \bigg(\int \varphi_l
|\mathbf{H}|^2\,
d\mu^i_t\bigg)^\frac{1}{2} - (16\pi + 4\kappa A^i_t)^{-\frac{1}{2}} \int
\varphi_l |\mathbf{H}|^2\, d\mu^i_t
\end{equation*}
Since the function $x^{1/2} - (16 \pi + 4\kappa a)^{-\frac{1}{2}} x$ is decreasing for $x\geq 4\pi + \kappa a$ we obtain, using \eqref{eq:moncalc.6.2}, that
$$ \limsup_{i \rightarrow \infty} b_i(t) \leq  \bigg(\int \varphi_l
|\mathbf{H}|^2\,
d\bar{\mu}_t\bigg)^\frac{1}{2} - (16\pi + 4 \kappa \bar{\mu}_t(\varphi_l) )^{-\frac{1}{2}} \int
\varphi_l |\mathbf{H}|^2\, d\bar{\mu}_t \leq 0  $$ 
for all $t \in [t_1,t_2]$. Together with $a_i(t)\rightarrow \big(\bar{\mu}_t(\varphi_l)\big)^{1/2}$ this implies that
$$ \limsup L^i_t \leq L_t \leq 0 \qquad \text{for all } t \in [t_1,t_2]\, .$$
Note that by \eqref{eq:moncalc.4} there is $C\geq 0$ such that
\begin{equation*}
L^i_t \leq C
\end{equation*}
for all $t\in [t_1,t_2]$ and all $i\geq i_0$. Then the claim follows from Fatou's lemma.
\end{proof}

We will now explain how to perturb $\Sigma$ slightly such that we can assume that the mass minimising 3-current spanning $\Sigma$ is unique. Let $S$ be any mass minimising 3-current spanning $\Sigma$. Note that by
Almgren \cite{Almgren2000}, see also De~Lellis-Spadaro \cite{DeLellisSpadaro11,  DeLellisSpadaro15, DeLellisSpadaro14, DeLellisSpadaro16a, DeLellisSpadaro16b}, the interior singular set of $S$ has codimension $2$. Note further that the interior regular set $\mathcal{R}_\text{int}$ can have at most countably connected components, since $S$ has finite mass. We denote these components by $R_j$ for $j \in 1,\ldots, N$ where $N \in \N \cup \{\infty\}$. We can pick points $p_j \in R_j$ and radii $r_j> 0$ such that 
\begin{itemize}
\item[-] $B_{r_j}(p_j) \cap \spt \Sigma = \emptyset$, 
\item[-] the balls $B_{r_j}(p_j)$ are pairwise disjoint,
\item[-] $S \res B_{r_j}(p_j)$ is smooth and consists of one single, smooth, connected component,
\item[-] $\sum_{i=1}^N \mathbf{M}[S\res B_{r_j}(p_j)] \leq 1$.
\end{itemize}
For $k \in \N$ let 
$S_k = S \setminus \cup_{j=1}^N B_{r_j/k}(p_j)$ and 
$$ \Sigma_k := \partial S_k\ .$$
The above estimates yield that
\begin{equation}\label{eq:moncalc.6.3}
\Sigma_k \rightarrow \Sigma
\end{equation}
in flat norm and in mass.  

\begin{lemma}\label{lem:moncalc.2}
$S_k$ is the unique mass minimizing current spanning $\Sigma_k$.
\end{lemma}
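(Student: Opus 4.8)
The plan is to argue by contradiction using the monotonicity-type structure of area-minimising currents and the fact that $S_k$ agrees with $S$ outside a fixed compact region where $S$ is smooth. Suppose $S_k' \ne S_k$ is another mass-minimising $3$-current with $\partial S_k' = \Sigma_k$. Since $S_k$ and $S_k'$ have the same mass, $Z := S_k - S_k'$ is a cycle ($\partial Z = 0$) with $\bfM[Z] \le 2\bfM[S_k] < \infty$; by the constancy theorem applied on each of the finitely many or countably many components, it suffices to show $Z = 0$. The key geometric input is that on each ball $B_{r_j/k}(p_j)$ the current $S_k'$ must also be mass-minimising relative to its own boundary data on $\partial B_{r_j/k}(p_j)$; comparing it there with $S_k \res B_{r_j/k}(p_j)$ (the smooth piece) and using that the excision was done along interior regular points, one gets that $S_k'$ is forced to coincide with the smooth sheet near those small spheres.

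First I would observe that outside $\cup_j \overline{B_{r_j}(p_j)}$, the currents $S_k$ and $S$ (and $S_k'$ after we identify them) all have the same boundary $\Sigma$, and $S$ restricted there is unchanged. Second, I would use that on each punctured region $B_{r_j}(p_j)\setminus \overline{B_{r_j/k}(p_j)}$, $S\res$ (this region) is a smooth connected minimal surface with multiplicity one, hence by unique continuation for minimal surfaces in arbitrary codimension (the appendix result following Kazdan cited in the paper) any area-minimising competitor agreeing with it to first order along a hypersurface must coincide with it. Third, the essential step: $S_k'$, being mass-minimising for $\Sigma_k$, can be "filled back in" by gluing $S\res B_{r_j/k}(p_j)$ along $\Sigma_k$ to produce a mass-minimising current spanning $\Sigma$ — because the glued-in smooth cap is itself minimising relative to $\Sigma_k$ as a piece of the larger minimiser $S$, and mass is additive on the disjoint pieces; by the uniqueness-free part of the argument this reconstituted current is a mass minimiser for $\Sigma$. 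If $S_k' \ne S_k$, the reconstituted current differs from $S$, contradicting nothing yet — so one needs the stronger structure: one should instead argue directly that $S_k'$ and $S_k$ have equal densities and tangent planes along the small spheres $\partial B_{r_j/k}(p_j)$ (both being the smooth sheet, forced by the regularity and the minimising property and the strong maximum principle / unique continuation), and then apply unique continuation on the connected regular set $\mathcal{R}_\text{int}(S) \setminus \cup_j B_{r_j/k}(p_j)$ to conclude $S_k' = S_k$ on the regular part, hence everywhere since the singular set has codimension $2$ and thus zero $\ch^3$-measure.

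The main obstacle I anticipate is making the "reconstitution" or "unique continuation" step rigorous when $S$ is not assumed to have connected regular set and may have infinitely many components $R_j$: one must ensure that the chosen balls genuinely separate the topology so that matching $S_k'$ to $S_k$ on each small sphere propagates across all of $\mathcal{R}_\text{int}$, and that the countable sum condition $\sum_j \bfM[S\res B_{r_j}(p_j)] \le 1$ together with disjointness gives enough control to pass to the limit and compare masses globally. A secondary technical point is that unique continuation as stated requires the two minimal surfaces to be graphs over one another near the sphere; establishing this requires the interior regularity theory of Almgren–De~Lellis–Spadaro to guarantee that both $S_k$ and $S_k'$ are, near $\partial B_{r_j/k}(p_j)$, smooth multiplicity-one sheets with the same orientation, which follows because that sphere lies in the interior regular set of $S$ by construction and small perturbations of the boundary do not create singularities near a regular boundary sphere.
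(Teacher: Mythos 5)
You actually had the paper's argument in hand when you wrote down the reconstituted current $S'':=S_k'+\sum_j S\res\overline{B}_{r_j/k}(p_j)$ and observed that it is mass minimising for $\Sigma$ (since $\bfM[S_k']=\bfM[S_k]$, subadditivity of mass gives $\bfM[S'']\leq \bfM[S_k]+\sum_j\bfM[S\res\overline{B}_{r_j/k}(p_j)]=\bfM[S]$, while $S$ is minimising). The step you declared to ``contradict nothing yet'' is in fact the punchline: $S''$ contains each cap $S\res B_{r_j/k}(p_j)$ as an \emph{open}, smooth, multiplicity-one piece, so $S''$ and $S$ coincide on an open subset of each component $R_j$ of $\mathcal{R}_{\mathrm{int}}(S)$. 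That is precisely the hypothesis of the unique continuation proposition in the appendix (coincidence on a ball $B_\eps(p)$), which makes the coincidence set open and closed in $R_j$ and hence forces $S''\supset S\res R_j$ for every $j$. Since the interior singular set has codimension $2$ and $\bfM[S'']=\bfM[S]$, there is no mass left over for any further component of the regular set of $S''$, so $S''=S$ and therefore $S_k'=S''-\sum_j S\res\overline{B}_{r_j/k}(p_j)=S_k$. This is exactly the paper's proof; you simply stopped one step short of closing it.

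The replacement route you proposed instead --- matching densities and tangent planes of $S_k'$ and $S_k$ along the spheres $\partial B_{r_j/k}(p_j)$ and then continuing uniquely across $\mathcal{R}_{\mathrm{int}}(S)\setminus\cup_j B_{r_j/k}(p_j)$ --- has two genuine gaps. First, it requires boundary regularity of the competitor $S_k'$ along these boundary spheres, together with the assertion that it attaches as a single sheet with the same tangent as $S$; in codimension $\geq 2$ this is not supplied by anything cited in the paper, and the ``strong maximum principle'' you invoke is a codimension-one tool. Second, even granting smooth attachment, you would be applying unique continuation from Cauchy data along a hypersurface inside the minimal surface, whereas the appendix only establishes unique continuation from coincidence on an open set. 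The gluing construction exists precisely to sidestep both difficulties by converting boundary agreement into interior, open-set agreement.
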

\begin{proof}  Assume there is another mass minimizing current $S'$ spanning $\Sigma_k$. But then
$$ S'':= S' + \cup_{j=1}^N S\res \overline{B}_{r_j/k}(p_j) $$
is mass minimising and bounds $\Sigma$. The interior singular set of $S''$ has again codimension $2$. Thus by unique continuation, see \S \ref{sec:unique continuation}, $S''$ has to coincide with $S$ on each connected component $R_j$ of $\mathcal{R}_\text{int}$. Note that $S''$ can have no further connected components of its interior regular set, since it has the same mass as $S$.  Thus $S'' = S$ and $S'=S_k$. \end{proof}

\begin{proof}[Proof of Theorem \ref{thm:mainthm}] We will present the proof in the case $\Sigma$ is an integral $2$-current. The necessary modifications if $\Sigma$ is a 2-dimensional flat chain mod 2 will be discussed at the end of the proof.

We first replace $\Sigma$ by $\Sigma_k$ such that by Lemma \ref{lem:moncalc.2} $S_k$ is the unique area minimising current spanned by $\Sigma_k$. 

As outlined above we use Ilmanen's elliptic regularisation scheme to construct Brakke flows $(\mu^k_t)_{(0\leq t\leq T^k_\text{max})}$ , starting at $\Sigma_{k,0}:= \Sigma_k$, which vanish at a finite time $T^k_\text{max}$. These flows arise as the slice of the translation invariant flows $(\bar{\mu}^k_t)_{(0\leq t\leq T^k_\text{max})}$ on $M \times \R$, obtained as a limit of approximating flows $(\mu^{k,i}_t)_{t \geq 0}$. 

We will for the moment omit the index $k$. We use the set-up as before. Note that by Lemma \ref{lem:initaluniqueness} we have that
\begin{equation}\label{eq:moncalc.7}
S^i(0) = S^i \ra S \times [0,\infty)\ .
 \end{equation}

Given $\eps>0$, we choose $t_1=0$ and $T_\text{max} -\delta < t_2 < T_\text{max}$, $l >l_0$, where $\delta>0$ and $l_0$ are given by Corollary \ref{cor:limiting volume}. By \eqref{eq:moncalc.11} and Lemma \ref{lem:moncalc.1} we can estimate 
\begin{equation}\label{eq:moncalc.8}
\begin{split}
\limsup_{i\ra \infty} (I^i_{t_2}-I^i_{0}) & \leq   \limsup_{i\ra \infty} \left|\int \varphi_l\, \langle \tau, \tau^\perp \rangle \, d\mu^{S,i}_{0}\right |
  \\
  & \quad+ \limsup_{i\ra \infty}\left| \int \varphi_l\, \langle \tau, \tau^\perp \rangle \, d\mu^{S,i}_{t_2}\right|
   + \frac{C}{l^{1/2}}T_\text{max}
 \end{split}
 \end{equation}
 Note that by \eqref{eq:moncalc.7} we have that
 $$ \lim_{i\ra \infty} \int \varphi_l\, \langle \tau, \tau^\perp \rangle \, d\mu^{S,i}_{0} = 0 $$
 and by Corollary \ref{cor:limiting volume}
 $$ \limsup_{i\ra \infty}\left| \int \varphi_l\, \langle \tau, \tau^\perp \rangle \, d\mu^{S,i}_{t_2}\right| \leq \varepsilon.$$
 Furthermore
 $$\lim_{i\ra \infty} I^i_{0} = f_\kappa \left( \int_{\Sigma_k \times \R} \varphi_l \, d\ch^3\right) - \int_{S_k \times \R} \varphi_l \, d\ch^4 \ .$$
 Again by Corollary \ref{cor:limiting volume} we have 
 $$\limsup_{i\ra \infty} I^i_{t_2} \geq -  \varepsilon\ .$$
 Putting this together we obtain
 \begin{equation*}
f_\kappa \left( \int_{\Sigma_k \times \R} \varphi_l \, d\ch^3\right) -  \int_{S_k \times \R} \varphi_l \, d\ch^4 \geq - C \varepsilon - \frac{C}{l^{1/2}}\ .
\end{equation*}
 Letting first $l \rightarrow \infty$ and then $\eps \ra 0$ yields the isoperimetric inequality for $\Sigma_k$. We can now let $k \rightarrow \infty$ to obtain the isoperimetric inequality for $\Sigma$.
 
{\bf The equality case:} In case of equality in \eqref{eq:main.1} we have
$$ f_\kappa\left(\mathbf{M}[\Sigma_k]\right) -  \mathbf{M}[S_k] \rightarrow 0$$
and the monoticity calculation, using Lemma \ref{lem:moncalc.1}, yields
$$ \int_0^{T_{\text{max},k}}\!\!\! \big(\mu_{k,t}(M)\big)^\frac{1}{2}\bigg( \bigg(\int
|\mathbf{H}|^2\,
d\mu^k_{t}\bigg)^\frac{1}{2} - (16\pi + 4 \kappa \mu^k_{t} (M) )^{-\frac{1}{2}}
 \int
 |\mathbf{H}|^{2}\, d\mu^k_{t}\bigg)\, dt \rightarrow 0 $$
where $(\mu^k_{t})_{t\geq 0}$ is the constructed Brakke flow starting at $\Sigma_k$. We aim to let $k \ra \infty$ and construct a non-vanishing Brakke flow starting at $\Sigma$. 

By Lemma \ref{lem:minexist}, there exists $\delta >0$ and $\eta>0$ such that
\begin{equation}\label{eq:moncalc.8b}
 \mu^k_{t}(M) \geq \eta 
 \end{equation}
for all $0\leq t < \delta$ and all $k$ sufficiently large.

We can thus consider a subsequential limit as $k \ra \infty$ and obtain a limiting Brakke flow $\{\mu_t\}_{t\geq 0}$ which satisfies \eqref{eq:moncalc.8b} as well. Similarly as in the proof of Lemma \ref{lem:moncalc.1} we obtain

$$ \int_0^\delta \big(\mu_t(M)\big)^\frac{1}{2}\bigg(  \bigg(\int
|\mathbf{H}|^2\,
d\mu_{t}\bigg)^\frac{1}{2} -(16\pi + 4 \kappa \mu_{t} (M) )^{-\frac{1}{2}} \int
 |\mathbf{H}|^{2}\, d\mu_{t}\bigg)\, dt = 0\ .$$
Thus for a.e.~$t \in (0,\delta)$, using \eqref{eq:moncalc.8b}, we have
\begin{equation}\label{eq:moncalc.9}
\int
|\mathbf{H}|^2\,
d\mu_{t} = 16 \pi + 4\kappa \mu_t(M) \ .
\end{equation}
Thus by Theorem \ref{lem:lowerbound} for a.e.~$t \in (0,\delta)$, $\mu_t$ is the Radon measure associated to
 a smooth embedded $2$-sphere $\Sigma_t$ with density one, where the mean curvature vector has constant length and $\Sigma_t$ is totally umbilic. Furthermore, it bounds a totally geodesic embedded $3$-ball $S_t$, with the mean curvature vector of $\Sigma_t$ proportional to the unit conormal of $S_t$ at every point in $\Sigma_t$.  $S_t$ is isometric to a geodesic ball in the 3-dimensional model space such that the mean curvature of the boundary coincides with the one of $\Sigma_t$. Since all the flows $\{\mu^k_t\}_{t\geq 0}$ are unit regular, see \cite[\S 4]{mcftripleedge}, White's local regularity theorem \cite{White05}, implies that the convergence is smooth for $0<t<\delta$ and the limiting flow $\{\mu_t\}_{0<t<\delta}$ is smooth (and thus the above characterisation of $\mu_t$ holds for all $t \in (0,\delta)$).

The smooth convergence implies that $\mu^k_t = \mu_{T^k_t}$ for any $0<t<\delta$ and $k$ sufficiently large. Recall that
in the flat norm
$$ \text{dist}(\Sigma_k,T^k_t) \leq (t + t^{1/2}) \mathbf{M}[\Sigma_k] $$
and $\Sigma_k \ra \Sigma$ in flat norm. This yields that 
$$\Sigma_t \ra \Sigma$$
 in flat norm. Since $\Sigma_t$ converges smoothly to a limit as $t \searrow 0$ this yields the claimed statement about $\Sigma$.

In the case that $\Sigma$ is a 2-dimensional flat chain mod 2, we work with flat chains mod 2 instead of integral currents. Note that Ilmanen's elliptic regularisation scheme works analogously in this setting. All the other parts of the argument also directly carry over. The only point to note is that the interior regularity of Almgren \cite{Almgren2000} has to be replaced by the corresponding result for flat chains mod 2 due to Federer \cite{Federer70}.
\end{proof}

\section{An optimal lower bound on the Willmore energy} \label{sec: lower bound Willmore}
In this section we give the proof of the optimal lower bound on the Willmore energy.\\[-2ex]
\begin{proof}[Proof of Theorem \ref {lem:lowerbound}] We consider the vectorfield $X$ given by
\begin{equation*}
X:= \phi(r) \bar{\nabla} r\ ,
\end{equation*}
where $r(p):=\text{dist}_{M}(p,p_0)$ for a fixed
$p_0\in N$ and $\varphi \in C^{0,1}_\text{loc}[0,\infty), \varphi\geq 0$. Here $\bar{\nabla}$ denotes the gradient
operator on $M$. The distance function to a point on such a manifold is smooth away from $p_0$ and satisfies, see for example \cite{Petersen}:
\begin{equation*}
\begin{split}
\bar{\nabla}r \neq&\ 0 \\
\text{Hess}(r)=\bar{\nabla}^2r\geq&\
\Psi(r)\big(\text{id}-\bar{\nabla}r\otimes\bar{\nabla}r\big) ,
\end{split}
\end{equation*}
for $p\neq p_0$, where $\Psi(r)=1/r$ for $\kappa = 0$ and
$\Psi(r)=\sqrt{\kappa}\cosh(\sqrt{\kappa}r)/\sinh(\sqrt{\kappa}r)$
for $\kappa >0$ and the second inequality holds w.r.t.\! an orthonormal basis of $T_pM$. For a
point $p\in \Sigma$, such that the tangent space of $\Sigma$ exists at $p$ we compute
\begin{equation}\label{eq:intest.1}
\begin{split}
\text{div}_{\Sigma}(X) =&\ \text{div}_\Sigma (\varphi \bar{\nabla}r) =
\varphi \,\text{div}_\Sigma(\bar{\nabla}r) + \varphi' \bar{g}(\nabla_\Sigma r, \bar{\nabla}r)\\
=&\  \varphi\, \text{tr}_{T_p\Sigma}\big(\text{Hess}(r)\big) + \varphi'\big (1 - |(\bar{\nabla} r)^\perp|^2\big)\\
\geq&\ \varphi\,\Psi \,
\text{tr}_{T_p\Sigma}\big(\text{id}-\bar{\nabla}r\otimes\bar{\nabla}r\big)
+ \varphi'\big (1 - |(\bar{\nabla} r)^\perp|^2\big)\\
=&\ \varphi\,\Psi \big(1+  |(\bar{\nabla} r)^\perp|^2\big) +  \varphi'\big (1 - |(\bar{\nabla} r)^\perp|^2\big)\\
=&\ \varphi\, \Psi + \varphi' + \big(\varphi\, \Psi - \varphi') |(\bar{\nabla} r)^\perp|^2 \ .
\end{split}
\end{equation}
Pick any $p_0\in \Sigma$ such that the density $\Theta(p_0)$ exists and
$\Theta(p_0)\geq 1$. \\[1ex]
{\bf The case $\kappa =0$:} Given $\sigma>0$ we choose
$$\varphi(r) = \frac{r}{r_\sigma^2}$$
where  $r_\sigma=\max (r,\sigma)$. This gives
\begin{equation}\label{eq:intest.2}
\int \text{div}_{\Sigma}(X)\, d\mu \geq 2\sigma^{-2} \mu (B_\sigma(p_0)) + \int_{M \setminus B_\sigma(p_0)} 
2 |X^\perp|^2\, d\mu\, .
\end{equation}
On the other hand, applying the divergence theorem yields
\begin{equation}\label{eq:intest.3}
\int \text{div}_{\Sigma}(X)\, d\mu =  - \sigma^{-2} \int_{B_\sigma(p_0)} r\, \bar{g}(\bar{\nabla}r, \bfH)\, d\mu - \int_{ M \setminus B_\sigma(p_0)} \bar{g}(X, \bfH)\, d\mu\ .
\end{equation}
Combining both equations yields
\begin{equation*}\begin{split}
2\sigma^{-2}\mu(B_\sigma(p_0))+ 2\int_{M\setminus
  B_\sigma(p_0)} \bigg|\frac{1}{4}\mathbf{H}+ X^\perp\bigg|^2
d\mu \leq \ &\frac{1}{8}\int_{M\setminus
  B_\sigma(p_0)}|\mathbf{H}|^2\, d\mu\\  &-\sigma^{-2}\int_{B_\sigma(p_0)}r \bar{g}(\bar{\nabla}r,\mathbf{H})\, d\mu\ .
\end{split}
\end{equation*}
Since 
\begin{equation}\label{eq:intest.4}
\lim_{\sigma\ra 0}\sigma^{-2}\mu(B_\sigma)(p_0) \geq \pi\ ,
\end{equation}
 we can take the limit $\sigma \ra 0$ to obtain
 \begin{equation}\label{eq:intest.5}
2\pi+ 2\int \bigg|\frac{1}{4}\mathbf{H}(x)+ \frac{\big(\bar{\nabla} r_{p_0}(x)\big)^\perp}{r_{p_0}(x)}\bigg|^2
d\mu (x) \leq \frac{1}{8}\int |\mathbf{H}|^2\, d\mu
\end{equation}
for any $p_0$ such that \eqref{eq:intest.4} holds.\\[1ex]
{\bf  The case $\kappa >0$:} we  can assume w.l.o.g.~via rescaling that $\kappa =1$. We choose
$$\varphi_\sigma(r) = \frac{\sinh(r)}{(2\cosh(r) -2)_{\sigma^2}}$$
where $(2\cosh(r) -2)_{\sigma^2} = \max(2\cosh(r) -2, \sigma^2)$. The choice of $\varphi_\sigma$ will become clear in the discussion of the equality case. We further denote $\sigma'= \sigma'(\sigma)$ to be the solution of
$$ 2\cosh(\sigma') -2 = \sigma^2 $$
Note that, 
\begin{equation}\label{eq:intest.6}
\lim_{\sigma \ra 0} \frac{\sigma'}{\sigma} = 1\ .
\end{equation}
Using $\psi(r)= \cosh(r)/\sinh(r)$, for $r<\sigma$ we have, suppressing the index $\sigma$,
\begin{equation*}\label{eq:intest.7}
 \phi \,\psi+ \phi' = 2 \sigma^{-2}\cosh(r) \geq 2 \sigma^{-2} \qquad \text{and} \qquad \phi \, \psi - \phi' = 0\ ,
\end{equation*}
and for $r\geq \sigma$, noting that
\begin{equation*}
 \phi'(r) = -\frac{1}{2\cosh(r) -2}
\end{equation*}
we obtain
\begin{equation*}\label{eq:intest.8}
 \phi\, \psi+ \phi' = \frac{1}{2}\qquad \text{and} \qquad \phi\, \psi - \phi' = 2\phi^2\ .
\end{equation*}
Inserting this into \eqref{eq:intest.1} gives
\begin{equation*}\label{eq:intest.9}
\int \text{div}_{\Sigma}(X)\, d\mu \geq 2\sigma^{-2} \mu (B_{\sigma'}(p_0)) + \int_{M \setminus B_{\sigma'}(p_0)} 
\frac{1}{2}+2 |X^\perp|^2\, d\mu \ .
\end{equation*}
Arguing as before, using \eqref{eq:intest.6}, we arrive at
 \begin{equation}\label{eq:intest.10}
2\pi+ 2\int \bigg|\frac{1}{4}\mathbf{H}(x)+ \varphi_0(r_{p_0}(x)) \big(\bar{\nabla} r_{p_0}(x)\big)^\perp \bigg|^2
d\mu (x)  + \int \frac{\kappa}{2}\, d\mu \leq \frac{1}{8} \int |\mathbf{H}|^2\, d\mu
\end{equation}
for any $p_0$ such that \eqref{eq:intest.4} holds.\\[1ex]
{\bf The equality case for $\kappa =0$:} To see that $\Sigma$ is a smoothly embedded $2$-sphere with unit density, we can nearly verbatim follow the argument in \cite[Proposition 2.1]{LammSchaetzle14}. We include it for completeness. We first note that by equality in \eqref{eq:intest.5}, since $p_0 \in \spt \mu$ is arbitrary, we have that $\Sigma$ has unit multiplicity:
\begin{equation}\label{eq:intest.11}
\theta^2(\mu)=1\ \text{on } \spt \mu\ .
\end{equation}
Furthermore \eqref{eq:intest.5} gives that
$$ \mathbf{H}(y)+ 4 \frac{\big(\bar{\nabla} r_{x}(y)\big)^{\perp_y}}{r_{x}(y)} \quad \text{for } \mu\text{-almost all } y\in \spt \mu\, ,$$
where ${}^{\perp_y}$ denotes the orthogonal projection onto $T^\perp_y\mu$. In particular
\begin{equation}\label{eq:intest.12}
 \mathbf{H}(y) \perp T_y\mu\quad \text{for } \mu\text{-almost all } y\in \spt \mu\, .
 \end{equation}
By Fubini's Theorem, for $\mu$-almost all $y$ it holds that
$$\mathbf{H}(y)+ 4 \frac{\big(\bar{\nabla} r_{x}(y)\big)^{\perp_y}}{r_{x}(y)} \quad \text{for } \mu\text{-almost all } x\in \spt \mu\, .$$
We choose any $y \in \spt \mu$ such that $T_y\mu$ exists. If $\mathbf{H}(y) = 0$, then $\spt \mu \subset \exp_y(T_y\mu)$. As in \cite[Proposition 2.1]{LammSchaetzle14} this contradicts the compactness of $\spt \mu$. Hence $\mathbf{H}(y) \neq 0$ and we may assume that $\mathbf{H}(y) \perp T_y\mu$ by \eqref{eq:intest.12}. By scaling and chosing exponential coordinates $x = \exp_y$, we may assume that $T_0\mu = \text{span}\{e_1,e_2\}, T\perp_0\mu = \text{span}\{e_3, \ldots ,e_n\}, \mathbf{H}(0)= 2e_3$ and we write $\perp$ for the projection to $\text{span}\{e_3, \ldots ,e_n\}$ in these coordinates. Denoting with $\langle \cdot , \cdot \rangle$ the metric on $T_yM$, we firstly get from the above for $j=4, \ldots, n$, that
\begin{equation*}\begin{split}
0 &= \left\langle \mathbf{H}(0), e_j \right \rangle = -4 \left\langle \frac{- x^\perp}{|x|^2}, e_j \right \rangle\\
&= 4 \left\langle \frac{x^\perp}{|x|^2}, e_j \right \rangle = 4\frac{x_j}{|x|^2} \quad \text{for } \mu\text{-almost all } x\neq 0 \in \exp^{-1}_y(\spt \mu)\ .
\end{split}\end{equation*}
Thus $\exp^{-1}_y(\spt \mu) \subset \text{span}\{e_1,e_2,e_3\}$. For $j=3$ we obtain
$$ 2 = \left\langle \mathbf{H}(0), e_3 \right \rangle = -4 \left\langle \frac{- x^\perp}{|x|^2}, e_3 \right \rangle = 4\frac{x_3}{|x|^2} \quad \text{for } \mu\text{-almost all } x\neq 0 \in \exp^{-1}_y(\spt \mu)\, .
$$
Thus $2x_3 = |x|^2$ and again as in \cite[Proposition 2.1]{LammSchaetzle14} one sees that 
$$ \mu = \mathcal{H}^2 \res \Sigma$$
where 
$$\Sigma = \exp_y\big(\partial B_1(e_3) \cap \text{span}\{e_1,e_2, e_3\}\big)\ .$$

\smallskip

To construct the spanning flat $3$-ball we argue as follows. Note first that we can repeat the same argument for every point $y \in \Sigma$. Pick $y_0 \in \Sigma$ such that
\begin{equation}\label{eq:intest.13}
|\mathbf{H}(y_0)| = \max_{\Sigma} |\mathbf{H}|
\end{equation}
and denote $r_0 = 2/|\mathbf{H}(y_0)|$. Applying the above argument at $y_0$, but without scaling, we obtain
$$\Sigma = \exp_y\big(\partial B_{r_0}(r_0 e_3) \cap \text{span}\{e_1,e_2, e_3\}\big) .$$
We define
$$S =  \exp_y\big( \overline{B}_{r_0}(r_0 e_3) \cap \text{span}\{e_1,e_2, e_3\}\big) .$$
{\bf Claim:}  $S$ with its induced metric $\tilde{g}$ is isometric via the exponential map at $y_0$ to $B_{r_0}(r_0 e_3) \subset \R^3$.\\[1ex]
Following the proof of \eqref{eq:intest.5} we see that we have equality in \eqref{eq:intest.1} with $\varphi = 1/r$ for every point $x\neq y_0 \in \Sigma$. Since all geodesics connecting $y_0$ with other points in $x \in \Sigma$ intersect $\Sigma$ at $x$ non-tangentially, we have that the ambient sectional curvatures
$$\text{sec}_g(\bar{\nabla}r_{y_0} \wedge V) = 0\, ,$$
where $V$ is any unit vector tangent to $S_r:=\partial B_r(y_0) \cap S$ for $0<r<2r_0$. The same argument gives that the principal curvatures along $S$ of $\partial B_r(y_0)$ are equal $1/r$ for $0<r<2r_0$ and that intrinsically $S_r$ is isometric via the exponential map at $y_0$ to $\partial B_r(0) \cap B_{r_0}(r_0 e_3) \subset \R^3$, written in polar coordinates around $0 \in \R^3$. But the Gauss equations then also show that
$$\text{sec}_{\tilde{g}}(V \wedge W) = 0$$
for any two unit vectors $V, W$ tangent to $S_r$ for $0<r<2r_0$. This proves the claim.\\[1ex]
Note that this implies that the mean curvature vector $\mathbf{H}^S(x)$ of $\Sigma \subset S$, seen as a submanifold of $S$ has length $2/r_0$ for all $y \in \Sigma$. Since
$$ \mathbf{H}^S(x) = \pi_{T_xS}\big(\mathbf{H}(x)\big)$$
the choice of $y_0$ in \eqref{eq:intest.13} implies that
$$ \mathbf{H}^S(x) = \mathbf{H}(x) \quad \forall\, x \in \Sigma .$$
It remains to show $S$ is totally geodesic. Pick any point $x_0\in \Sigma$. By the argument before we have that
$$ S =  \exp_{x_0}\big( B_{r_0}(r_0 e_3) \cap \text{span}\{e_1,e_2, e_3\}\big)\, ,$$
where we have chosen $e_1,e_2, e_3$ as before. But this implies that any extrinsic geodesic connecting $x_0$ with $x \neq x_0 \in \Sigma$ has the same length as the intrinsic geodesic in $S$ connecting both points, and thus they both have to concide. This shows that $S$ is totally geodesic, which also implies that $\Sigma$ is totally umbilic in $M$.\\[1ex]
{\bf The equality case for $\kappa >0$:} We can again by scaling assume that $\kappa =1$. The argument is completely analogous to the case $\kappa=0$, the only thing to note is that the equation
$$ 2 = 4 \, \frac{\sinh(|x|)}{2\cosh(|x|) -2}\, \frac{x_3}{|x|} $$
describes the boundary of a geodesic sphere with mean curvature $2$ in normal coordinates around the south pole in the $3$-dimensional model space. 
 \end{proof}

\begin{appendix}

\section{}

\subsection{Strong stationarity}
Let $(M^n,g)$ be a general smooth, complete Riemannian manifold and $S \subset M$ a locally mass minimising rectifiable $m$-current (resp. $m$-dimensional rectifiable flat chain mod 2). The next lemma recalls that $S$ is strongly stationary in the sense of White, compare \cite{EkholmWhiteWienholtz02}.

\begin{lemma}\label{lem:div thm}
Let $S \subset M$ a locally mass minimising rectifiable $m$-current (resp. $m$-dimensional  flat chain mod 2).
There exists an $\ch^{m-1}$-measurable normal vectorfield  $\mathbf{n}$ on $\partial S$ with $\sup |\mathbf{n}| \leq 1$ such that for any vector field $V \in C^1_c (M \times \R)$ it holds
\begin{equation}\label{eq:divthm.1}
 \int \text{{\rm div}}_{S^{\eps_i}}(V)\, d\mu_{S} = \int \langle V, \mathbf{n}\rangle\, d\mu_{\partial S}\ . 
 \end{equation}
\end{lemma}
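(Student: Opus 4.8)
The plan is to realise the first variation of $S$ as integration against a measure supported on $\partial S$ and dominated by $\mu_{\partial S}$, and then to invoke Riesz representation. Write $\delta S(V):=\int \mathrm{div}_S(V)\, d\mu_S$ for $V$ a compactly supported $C^1$ vectorfield on $M$, and let $(\phi_t)$ be the flow of $V$. By the first variation formula for currents (see \cite{Simon}), for any open $U\supset\spt V$ the map $t\mapsto \mathbf{M}[(\phi_t)_\# S\res U]$ is differentiable at $t=0$ with derivative $\delta S(V)$. The aim is the a priori bound
\begin{equation*}
 |\delta S(V)|\le \int |V|\, d\mu_{\partial S}\, ,
\end{equation*}
which exhibits $V\mapsto\delta S(V)$ as a linear functional dominated by the finite measure $\mu_{\partial S}$; Riesz representation together with Radon--Nikodym then produces a $\mu_{\partial S}$-measurable vectorfield $\mathbf{n}$ with $|\mathbf{n}|\le 1$ $\mu_{\partial S}$-a.e.\ (we take the representative with $\sup|\mathbf{n}|\le1$) such that $\delta S(V)=\int\langle V,\mathbf{n}\rangle\, d\mu_{\partial S}$, for all $V\in C^1_c(M)$ and hence, by approximation, all $V\in C^0_c(M)$.

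By linearity of $\delta S$ and a partition of unity it suffices to prove the bound when $V$ is supported in a small ball $U$ contained in a neighbourhood on which $S$ is mass minimising. If such a $U$ does not meet $\spt\partial S$, then $(\phi_t)_\#S$ has the same boundary as $S$, agrees with it outside $U$, and lies in the minimality neighbourhood, so $\mathbf{M}[S\res U]\le\mathbf{M}[(\phi_t)_\#S\res U]$; differentiating at $t=0$ and repeating with $-V$ gives $\delta S(V)=0$. Hence $\delta S$ is supported on $\spt\partial S$ and we may assume $U\cap\spt\partial S\neq\emptyset$.

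The key step is the band competitor. Let $B_t$ be the $m$-current swept out by $\partial S$ along the flow $(\phi_s)_{0\le s\le t}$, i.e.\ $B_t=h_\#(I\times\partial S)$ where $I=[0,t]$ and $h(s,y)=\phi_s(y)$; since $\partial(\partial S)=0$, the homotopy formula gives $\partial B_t=(\phi_t)_\#\partial S-\partial S$, so $(\phi_t)_\#S-B_t$ has boundary $\partial S$ and differs from $S$ only inside $U$. Minimality then yields $\mathbf{M}[S\res U]\le\mathbf{M}[(\phi_t)_\#S\res U]+\mathbf{M}[B_t]$, while the area formula for pushforwards bounds the swept-out mass by $\mathbf{M}[B_t]\le\int_0^t\!\int_{\partial S}\big(|V|+O(s)\big)\,d\mu_{\partial S}\,ds$, whence $\limsup_{t\to0^+}t^{-1}\mathbf{M}[B_t]\le\int|V|\,d\mu_{\partial S}$. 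Combining this with the differentiability of $t\mapsto\mathbf{M}[(\phi_t)_\#S\res U]$ at $0$ gives $\delta S(V)\ge-\int|V|\,d\mu_{\partial S}$, and the same argument with $-V$ gives the matching upper bound. The flat-chains-mod-$2$ case is identical, all constructions (in particular $B_t$ and the competitor $(\phi_t)_\#S-B_t$) being taken mod $2$.

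Finally, to see that $\mathbf{n}$ is normal to $\partial S$: if $V$ restricts along $\partial S$ to a vectorfield tangent to $\partial S$, then the normal displacement of $\partial S$ under the flow is quadratic in $t$, so $\mathbf{M}[B_t]=o(t)$, and the argument above forces $\delta S(V)=0$; together with the interior vanishing this gives $\int\langle V,\mathbf{n}\rangle\,d\mu_{\partial S}=0$ for every such $V$, and a measurable selection over a countable dense family of tangential directions yields $\mathbf{n}(y)\perp T_y(\partial S)$ for $\mu_{\partial S}$-a.e.\ $y$. I expect the main obstacle to be the two limits as $t\to0$ — the differentiability of $t\mapsto\mathbf{M}[(\phi_t)_\#S\res U]$ and the estimate $\limsup t^{-1}\mathbf{M}[B_t]\le\int|V|\,d\mu_{\partial S}$ — where the smoothness of the flow and the rectifiability of $\partial S$ must be used with some care, and also the bookkeeping needed to keep all competitors inside a single minimality neighbourhood (this is the only place the word "locally" really matters); the passage from the resulting bound to the representation, and the normality of $\mathbf{n}$, are then routine.
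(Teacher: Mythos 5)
Your proposal is correct and follows essentially the same route as the paper: compare $S$ with the competitor $(\phi_t)_\#S$ minus the homotopy band swept out by $\partial S$, differentiate the resulting mass inequality at $t=0$ using the first variation and homotopy formulas, and apply the Riesz representation theorem. The only cosmetic difference is that the paper extracts the sharper bound $\delta S(V)\le\int|V^\perp|\,d\mu_{\partial S}$ directly from the homotopy formula, so the normality of $\mathbf{n}$ comes for free, whereas you derive the bound with $|V|$ and then recover normality by the separate (equivalent) observation that tangential variations sweep out mass $o(t)$.
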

\begin{proof}
Consider $\phi :\R\times (M\times \R) \ra M\times \R$ such that $\phi(0,x) = x$ and $\tfrac{\partial}{\partial t} \phi = - V \circ \phi$. Since $S^{\eps_i}$ is locally mass minimising we have
$$ \bfM [S^{\eps_i}] \leq \bfM [(\phi(t,\cdot))_\#(S^{\eps_i})] + \bfM[ \phi_\#([0,t]\times \partial S^{\eps_i})]\ .$$
This implies that
$$ \frac{d}{dt}\bigg|_{t=0} \Big(\bfM[(\phi(t,\cdot))_\#(S^{\eps_i})] + \bfM[ \phi_\#([0,t]\times \partial S^{\eps_i})]\Big) \geq 0\, .$$
Using first variation formula and the homotopy formula, see \cite{Simon}, this yields
$$ \int \text{{\rm div}}_{S^{\eps_i}}(V)\, d\mu_{S^{\eps_i}} \leq \int |V^\perp|\, d\mu_{\partial S^{\eps_i}} .$$
The statement follows then from the Riesz representation theorem.
\end{proof}

\subsection{Non-optimal isoperimetric inequality}
We note that one can use the Euclidean isoperimetric inequality for integral currents (resp. flat chains mod $\nu$) to obtain on a Cartan-Hadamard manifold a non-optimal isoperimetric inequality in any dimension and codimension.

\begin{lemma}\label{lem:iso}
Assume $M \in \mathcal{CH}(m+k,0)$ for $k \in \N$ and $K\subset M$ compact. Let $T$ be an integral $m$-current (flat chain mod $\nu$) with $\spt T \subset K$ and $\partial T =0$. Then there exists a constant $C_{K,m} = C(M,K, m)$ and an integral $m+1$-current (flat chain mod $\nu$) $Q$ such that $\partial Q = T$ and
$$ \bfM[Q] \leq C_{K,m}\, \bfM[T]^\frac{m+1}{m}\ .$$
The same holds true on the manifold $M\times \R$ with the standard product metric, provided $\spt T \subset K\times \R$.
\end{lemma}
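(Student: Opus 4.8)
The plan is to reduce the statement on a general Cartan--Hadamard manifold to the Euclidean isoperimetric inequality for integral currents (respectively flat chains mod $\nu$), which is Theorem \ref{thm:almgren}, by exploiting the fact that on a Cartan--Hadamard manifold the exponential map at a point is a diffeomorphism which, after restricting to a compact set, distorts distances and hence $m$-dimensional and $(m+1)$-dimensional Hausdorff measures by only a bounded factor.

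First I would fix a point $p_0 \in M$ and choose $R>0$ so large that $K \subset B_R(p_0)$. Since $M \in \mathcal{CH}(m+k,0)$, the map $\exp_{p_0} : T_{p_0}M \cong \R^{m+k} \to M$ is a diffeomorphism (Cartan--Hadamard), and its differential is bounded above and below on the compact ball $\overline{B_R(0)} \subset \R^{m+k}$: by the non-positivity of the curvature one has $|d\exp_{p_0}|\geq 1$ everywhere (Jacobi field comparison with Euclidean space), while on the compact set $\overline{B_R(0)}$ one has an upper bound $|d\exp_{p_0}| \leq \Lambda = \Lambda(M,K,m)$. Consequently the map $F := \exp_{p_0}^{-1}$ restricted to $K$ is a bi-Lipschitz homeomorphism onto its image $\widetilde K \subset \R^{m+k}$ with $\operatorname{Lip}(F) \leq 1$ and $\operatorname{Lip}(F^{-1}) \leq \Lambda$. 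Pushing $T$ forward, $F_\# T$ is an integral $m$-current (resp. flat chain mod $\nu$) in $\R^{m+k}$ with $\partial (F_\# T) = F_\#(\partial T) = 0$ and $\spt(F_\# T) \subset \widetilde K$, and the Lipschitz bounds give $\bfM[F_\# T] \leq \Lambda^{\,m}\, \bfM[T]$ (a Lipschitz map with constant $\leq \Lambda$ increases $m$-mass by at most $\Lambda^m$; in the other direction, $F^{-1}$ having Lipschitz constant $\leq \Lambda$ gives $\bfM[T] \leq \Lambda^{m}\bfM[F_\# T]$, but only the first inequality is needed here).

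Next I would apply Theorem \ref{thm:almgren} in $\R^{m+k}$ to $F_\# T$: there is an $(m+1)$-dimensional integral current (resp. flat chain mod $\nu$) $\widetilde Q$ with $\partial \widetilde Q = F_\# T$ and $\bfM[\widetilde Q] \leq \gamma(m+1)\,\bfM[F_\# T]^{(m+1)/m}$. By the convex hull property (or by the isoperimetric estimate of Federer--Fleming together with the fact that cone constructions stay inside the convex hull), one may also assume $\spt \widetilde Q$ is contained in a fixed compact set depending only on $\widetilde K$, hence in the image $\widetilde K' \subset B_{R'}(0)$ of a slightly larger geodesic ball $B_{R'}(p_0) \supset B_R(p_0)$ on which $F^{-1} = \exp_{p_0}$ is still defined and Lipschitz with constant $\leq \Lambda' = \Lambda'(M,K,m)$. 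Setting $Q := (F^{-1})_\# \widetilde Q = (\exp_{p_0})_\# \widetilde Q$, we get $\partial Q = (F^{-1})_\# \partial \widetilde Q = (F^{-1})_\#(F_\# T) = T$ and
\begin{equation*}
\bfM[Q] \leq (\Lambda')^{m+1}\, \bfM[\widetilde Q] \leq (\Lambda')^{m+1}\,\gamma(m+1)\,\bfM[F_\# T]^{\frac{m+1}{m}} \leq (\Lambda')^{m+1}\Lambda^{m+1}\gamma(m+1)\,\bfM[T]^{\frac{m+1}{m}},
\end{equation*}
so the claim holds with $C_{K,m} := (\Lambda\Lambda')^{m+1}\gamma(m+1)$.

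For the final assertion about $M \times \R$ with the product metric, one simply notes that $M \times \R$ is again a Cartan--Hadamard manifold of dimension $m+k+1$ (the product of complete simply connected non-positively curved manifolds is such), so the same argument applies verbatim with $p_0$ replaced by $(p_0,0)$ and $K$ by $K \times \R$ — although $K \times \R$ is not compact, the support of $T$ lies in $K \times [-L,L]$ for some $L$ since $T$ has compact support, and the constant depends only on $M$, $K$ and $m$ because the bi-Lipschitz constants of the exponential map of $M\times\R$ on a ball of any fixed radius are controlled purely in terms of those of $M$ (the extra $\R$-factor is flat). The main point requiring care is the uniformity of the constant: one must make sure the competitor $\widetilde Q$ produced by the Euclidean statement can be taken to live in a compact region whose size is controlled by $K$ alone, so that the Lipschitz constant $\Lambda'$ of $\exp_{p_0}$ used to push $\widetilde Q$ back does not depend on $T$ — this is exactly what the convex hull property of mass-minimisers (or of the Federer--Fleming cone construction) provides.
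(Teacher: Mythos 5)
Your proposal is correct and follows essentially the same route as the paper: pass to exponential coordinates at a basepoint, use that on a compact set the metric is uniformly bi-Lipschitz equivalent to the Euclidean one, and invoke the Euclidean isoperimetric inequality for integral currents (resp.\ flat chains mod $\nu$) before pushing the competitor back. The only cosmetic difference is that the paper cites the deformation theorem of Federer--Fleming/White, which packages both the isoperimetric estimate and the support control that you instead obtain from Almgren's theorem together with the convex hull property.
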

\begin{proof}
 By picking any basepoint $p \in M$ we can write the metric $g$ of $M$ in exponential coordinates on $T_pM$. Thus on any compact set $K \subset M$ the metric $g$ is uniformly equivalent to the Euclidean metric on $T_pM$. The estimate then follows from the deformation theorem on $\R^{m+k}$ for currents, see for example \cite[Theorem 30.1]{Simon} or respectively for flat chains mod $\nu$, see \cite{White99}.
\end{proof}
\subsection{Avoidance principle in higher codimension} 
We recall White's barrier theorem for mean curvature flow, see \cite[Theorem 14.1]{White_mcfnotes}. We include the proof for completeness.

\begin{theorem}[White]\label{thm:barrier}
 Suppose $\cm$ is the space-time support of an $m$-dimensional integral Brakke flow $(\mu_t)_{t\in I}$ in $\Omega \subset M$. Let $u:\Omega \ra \R$ be a smooth function, so that at $(x_0,t_0)$,
 $$\frac{\partial u}{\partial t} < \text{tr}_m \nabla^2u\, ,$$
 where $\nabla^2u$ is the spacial ambient Hessian, and $\text{tr}_m$ is the sum of the smallest $m$ eigenvalues. Then
 $$ u\big|_{\cm \cap \{t\leq t_0\}}$$
 cannot have a local maximum at $(x_0,t_0)$.
\end{theorem}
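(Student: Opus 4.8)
The plan is to run the classical parabolic maximum principle argument weakly, testing Brakke's inequality against a suitable function built out of $u$. Recall the smooth idea: at a space–time local maximum of a function one has $\p_t u\ge 0$, $\nabla u=0$ and $\nabla^2u\le 0$, hence $\p_t u\ge\text{tr}_m\nabla^2u$, contradicting the strict inequality. Since $\cm$ is only closed, I would encode ``$u$ attains a maximum on $\cm$'' through a test function $\phi=\eta\,\psi(\tilde u)$, where $\tilde u$ is a small perturbation of $u$ making the maximum strict, $\psi\ge 0$ is a convex cut‑off vanishing below the maximal value, and $\eta$ is a spatial cut‑off. The contradiction will be that $t\mapsto\mu_t(\phi)$ cannot increase, yet it must pass from $0$ at earlier times (where the flow has already left the region $\{\psi(\tilde u)>0\}$) to a positive value at $t_0$.

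Concretely, assume for contradiction that $u|_{\cm\cap\{t\le t_0\}}$ has a local maximum at $(x_0,t_0)$ with value $M_0$, and (passing to a slightly earlier time if necessary) that $x_0\in\spt\mu_{t_0}$. First I would set $\tilde u(x,t):=u(x,t)-\eps\big(\rho(x)+(t_0-t)\big)$, where $\rho\ge 0$ is smooth with $\rho(x_0)=0$, $\nabla\rho(x_0)=0$, $\nabla^2\rho(x_0)=2\,\mathrm{id}$ (e.g.\ $|x|^2$ in normal coordinates at $x_0$). For $\eps>0$ small one still has $\p_t\tilde u-\text{tr}_m\nabla^2\tilde u<0$ near $(x_0,t_0)$ (the corrections are $+\eps$ in the time derivative and at most $\eps(2m+o(1))$ added to $-\text{tr}_m\nabla^2 u$), while on $\cm\cap\{t\le t_0\}$ near $(x_0,t_0)$ one has $\tilde u\le M_0-\eps\rho(x)-\eps(t_0-t)$ with equality only at $(x_0,t_0)$. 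Then I would fix $r,\rho_0>0$ so small that $\p_t\tilde u-\text{tr}_m\nabla^2\tilde u\le -2\eps_0<0$ on $\overline{B}_r(x_0)\times[t_0-\rho_0,t_0]$ and that this slab lies in the neighbourhood where the maximum is attained; set $\delta_0:=M_0-\sup\{\,\tilde u(x,t):(x,t)\in\cm,\ r/2\le|x-x_0|\le r,\ t_0-\rho_0\le t\le t_0\,\}>0$; choose $\delta\in(0,\min(\delta_0,\eps\rho_0))$, a spatial cut‑off $\eta\in C_c^2(B_r(x_0))$ with $\eta\equiv 1$ on $B_{r/2}(x_0)$, and $\psi\in C^2(\R)$ with $\psi\ge 0$, $\psi''\ge 0$, $\psi\equiv 0$ on $(-\infty,M_0-\delta]$ and $\psi(M_0)>0$ (say $\psi(s)=(s-M_0+\delta)_+^3$). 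The test function is $\phi(x,t):=\eta(x)\,\psi\big(\tilde u(x,t)\big)$.

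The argument then rests on two computations. (i) Testing Brakke's inequality against $\phi$ and using that for a.e.\ $t$ the first variation of $\mu_t$ is $-\bfH\mu_t$, so $\int\langle\nabla\phi,\bfH\rangle\,d\mu_t=-\int\text{tr}_{T\mu_t}\nabla^2\phi\,d\mu_t$, then discarding $-\int\phi|\bfH|^2\,d\mu_t\le 0$ and using $\text{tr}_{T\mu_t}\nabla^2\phi\ge\text{tr}_m\nabla^2\phi$ (the trace over any $m$‑plane dominates the sum of the $m$ smallest eigenvalues), one gets $\overline{D}_t\mu_t(\phi)\le\int(\p_t\phi-\text{tr}_m\nabla^2\phi)\,d\mu_t$. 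Since $\delta<\delta_0$, on $\spt\mu_t$ (for $t\in(t_0-\rho_0,t_0]$) one has $\psi(\tilde u)=\psi'(\tilde u)=\psi''(\tilde u)=0$ wherever $\nabla\eta\ne 0$; hence $\mu_t$‑a.e.\ $\nabla^2\phi\ge\eta\,\psi'(\tilde u)\,\nabla^2\tilde u$ and $\p_t\phi=\eta\,\psi'(\tilde u)\,\p_t\tilde u$, so, as $\psi',\psi''\ge 0$, $\p_t\phi-\text{tr}_m\nabla^2\phi\le\eta\,\psi'(\tilde u)\,(\p_t\tilde u-\text{tr}_m\nabla^2\tilde u)\le 0$; thus $t\mapsto\mu_t(\phi)$ is non‑increasing on $(t_0-\rho_0,t_0]$. (ii) For $t<t_0-\delta/\eps$ the local maximum of $u$ gives $u\le M_0$ on $\cm\cap\big(\overline{B}_r(x_0)\times[t_0-\rho_0,t_0]\big)$, hence $\tilde u(x,t)\le M_0-\eps(t_0-t)<M_0-\delta$ on $\spt\mu_t\cap B_r(x_0)$, so $\phi(\cdot,t)\equiv 0$ there and $\mu_t(\phi)=0$. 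Choosing $t_1\in(t_0-\rho_0,t_0-\delta/\eps)$ (nonempty because $\delta/\eps<\rho_0$) and applying Brakke's inequality from $t_1$ to $t_0$ then yields $\mu_{t_0}(\phi)\le\mu_{t_1}(\phi)=0$; but $\tilde u(x_0,t_0)=M_0$, so $\phi(\cdot,t_0)>0$ near $x_0$, and $x_0\in\spt\mu_{t_0}$ forces $\mu_{t_0}(\phi)>0$, a contradiction.

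The step requiring real care, rather than being routine, is the coupling of the cut‑off $\eta$ with the nonlinearity $\psi$ in (i): the ``dead zone'' $\{\psi=0\}$ must be wide enough ($\delta<\delta_0$) to annihilate, on the support of the flow, every term differentiating $\eta$ — so that the clean pointwise bound $\p_t\phi-\text{tr}_m\nabla^2\phi\le\eta\,\psi'(\tilde u)(\p_t\tilde u-\text{tr}_m\nabla^2\tilde u)$ genuinely holds $\mu_t$‑a.e.\ (here $\psi''\ge 0$ is used to absorb the $\nabla\tilde u\otimes\nabla\tilde u$ term, together with $\text{tr}_m(A+B)\ge\text{tr}_m A$ for $B\ge 0$) — and at the same time narrow enough ($\delta<\eps\rho_0$) that the region $\{\phi>0\}$ has already been vacated by the flow strictly before $t_0$. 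A secondary, more bookkeeping issue is the reduction to $x_0\in\spt\mu_{t_0}$, where one uses $(x_0,t_0)\in\cm$ and the structure of the space–time support near the relevant final time; and one should record the precise integrated form of Brakke's inequality (with the first‑variation identity valid for a.e.\ $t$) that legitimises passing from $\overline{D}_t\mu_t(\phi)\le 0$ to $\mu_{t_0}(\phi)\le\mu_{t_1}(\phi)$.
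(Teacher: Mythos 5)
Your proposal is correct and follows essentially the same route as the paper's proof: perturb $u$ to make the maximum strict, plug a nonnegative convex function of the perturbed $u$ (vanishing on the flow near the parabolic boundary) into Brakke's inequality, use the first variation identity together with $\operatorname{tr}_{T_x\mu_t}\nabla^2\phi\ge\operatorname{tr}_m\nabla^2\phi$ and convexity to reduce to the pointwise strict differential inequality, and contradict positivity at $(x_0,t_0)$. The paper implements this more compactly with the single test function $(u^+)^4$ after the perturbation $u-d(x,x_0)^4-|t_0-t|^2$ and a normalising constant (so your separate cutoff $\eta$ and dead-zone bookkeeping are absorbed into the choice of $P(r)$), but the mechanism is identical, and the one technical point you flag (mass of the flow actually being present in $\{\phi>0\}$ at times $\le t_0$) is treated at the same level of detail in the paper.
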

\begin{proof} Assume otherwise. We may assume that  $\cm = \cm \cap \{t\leq t_0\}$ and that $u|_\cm$ has a strict local maximum at $(x_0,t_0)$.  (Otherwise we could replace $u$ by $u- (d(x,x_0))^4 - |t_0-t|^2$).

Let $P(r) = B_r(x_0) \times (t_0-r^2,t_0]$. Choose $r>0$ small enough so that $-r^2$ is past the initial time of the flow, $r$ is smaller than the injectivity radius at $x_0$,  $u|_{\cm\cap \overline{P}(r)} $ has a maximum at $(x_0,t_0)$ and nowhere else and $\tfrac{\partial u}{\partial t} < \text{tr}_m \nabla^2u$ on $\bar{P(r)}$. By adding a constant we can furthermore assume that $u_{\cm \cap (\bar{P}\setminus P)}<0< u(x_0,t_0)$. We let $u^+:= \max\{u,0\}$ and plug $(u^+)^4$ into the definition of Brakke flow. Thus
\begin{equation*}
 \begin{split}
 0&\leq \int_{B_r} (u^+)^4 \, d\mu_{t_0} = \int_{B_r} (u^+)^4 \, d\mu_{t_0} - \int_{B_r} (u^+)^4 \, d\mu_{t_0-r^2}\\
 &\leq \int_{t_0-r^2}^{t_0} \int \bigg(\frac{\partial}{\partial t} (u^+)^4 + \langle \mathbf{H}, \nabla(u^+)^4\rangle - |\mathbf{H}|^2 (u^+)^4 \bigg) \, d\mu_t dt\\
 &\leq \int_{t_0-r^2}^{t_0} \int \bigg(\frac{\partial}{\partial t} (u^+)^4 - \text{div}_{\cm}\big(\nabla(u^+)^4\big) \bigg) \, d\mu_t dt\\
 &= \int_{t_0-r^2}^{t_0} \int 4 \bigg((u^+)^3\frac{\partial}{\partial t} u^+-3 (u^+)^2 |\nabla^\cm u^+|^2 - (u^+)^3\text{div}_{\cm}\big(\nabla(u^+)\big) \bigg) \, d\mu_t dt\\
 &\leq  \int_{t_0-r^2}^{t_0} \int 4 (u^+)^3\bigg(\frac{\partial}{\partial t} u^+ - \text{tr}_{m} \nabla^2u^+ \bigg) \, d\mu_t dt < 0\, ,
\end{split}
\end{equation*}
which is a contradiction.
\end{proof}

\subsection{Unique continuation} \label{sec:unique continuation}

For smooth minimal hypersurfaces in a Riemannian manifold, unique continuation follows from the work of Garofalo-Lin \cite{GarofaloLin86, GarofaloLin87}. The case of higher codimension is not treated in there, but follows from work of Kazdan \cite{Kazdan88}, as we now will explain. Assume $\Sigma_1, \Sigma_2$ are smooth, $m$-dimensional immersed minimal surfaces in a smooth Riemannian manifold $(M^{m+k}, g)$ which coincide on a ball $B_\eps(p)$ for some $p \in M$ and some $\eps>0$ sufficiently small. We can assume w.l.o.g.~ that both $\Sigma_1, \Sigma_2$ are embedded in a neighborhood of $\bar{B}_\eps(p)$, otherwise we consider each sheet separately. 

\begin{proposition}
There exists $\delta>0$ such that $\Sigma_1, \Sigma_2$ agree also on $B_{\eps+\delta}(p)$.
\end{proposition}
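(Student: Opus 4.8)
The plan is to reduce the claimed unique continuation statement for minimal surfaces in arbitrary codimension to the unique continuation principle for solutions of a second-order elliptic system, in the form provided by Kazdan \cite{Kazdan88}. The key point is that a minimal immersion, written as a graph over one of its tangent planes, solves a quasilinear elliptic system for which the linearisation has Lipschitz coefficients, and Kazdan's theorem applies to such systems.

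First I would set up coordinates. Since $\Sigma_1$ and $\Sigma_2$ coincide on $B_\eps(p)$ and (after passing to sheets) are embedded near $\bar B_\eps(p)$, pick a point $q$ on the relative boundary of the overlap region inside $B_{\eps+\delta}(p)$ and work in a normal coordinate chart of $M$ centred near $q$ in which a neighbourhood of $q$ in $\Sigma_1$ is a graph $x\mapsto (x,u_1(x))$ over a domain $\Omega\subset\R^m$, with values in $\R^k$; likewise $\Sigma_2$ is a graph of $u_2$ over the same $\Omega$ (shrinking if necessary so both are graphs over the same plane). The minimal surface equation in the ambient metric $g$, expressed for such a graph, is a system
\begin{equation*}
a^{\alpha\beta}(x,u,Du)\,\partial_\alpha\partial_\beta u^j + b^j(x,u,Du) = 0,\qquad j=1,\dots,k,
\end{equation*}
with $(a^{\alpha\beta})$ uniformly elliptic and the coefficients smooth in all arguments (since $g$ is smooth). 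Hence $w:=u_1-u_2$ satisfies a homogeneous linear elliptic system $a^{\alpha\beta}\partial_\alpha\partial_\beta w^j + (\text{lower order})^j_k w^k + (\text{first order})^{j\gamma}_k \partial_\gamma w^k = 0$ obtained by the usual difference-quotient trick (writing the difference of the two nonlinear expressions as an integral of the derivative along the segment from $u_2$ to $u_1$), and the resulting coefficients are Lipschitz because the $a^{\alpha\beta}, b^j$ are $C^1$ and $u_1,u_2$ are smooth.

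Next I would invoke Kazdan's unique continuation result for such systems: if $w$ solves this system and vanishes on an open subset of the connected domain $\Omega$, then $w\equiv 0$ on $\Omega$. Since $u_1=u_2$ on the part of $\Omega$ lying over $B_\eps(p)$, which is open and nonempty, we conclude $u_1\equiv u_2$ on all of $\Omega$, so $\Sigma_1$ and $\Sigma_2$ agree in a neighbourhood of $q$. A covering/connectedness argument over the relatively compact set $\bar B_{\eps+\delta}(p)\setminus B_\eps(p)$ (using that the set where the two surfaces agree is both open, by the above, and closed, by continuity) then yields a uniform $\delta>0$ with $\Sigma_1=\Sigma_2$ on $B_{\eps+\delta}(p)$.

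The main obstacle is purely bookkeeping rather than conceptual: one must make sure that the same coordinate plane can be used for both surfaces simultaneously near every relevant point (which holds after shrinking, since the two surfaces are $C^1$-close wherever they coincide and by continuity stay close a little beyond), and that the hypotheses of Kazdan's theorem — ellipticity plus Lipschitz regularity of the coefficients of the linear system — are genuinely met; the latter is automatic from smoothness of $g$ and smoothness of $\Sigma_1,\Sigma_2$. One should also note that for immersed (not embedded) surfaces the argument is applied sheet by sheet, which is legitimate because agreement on an open set forces the number of sheets and their identifications to match locally.
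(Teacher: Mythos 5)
Your proposal is correct and follows essentially the same route as the paper: write both surfaces as graphs over a common plane, take the difference of the quasilinear minimal surface systems (whose principal part is the same scalar elliptic operator acting on each component) to obtain a linear differential inequality of the form $|Lw|\leq C(|w|+|Dw|)$ with Lipschitz data, and invoke Kazdan's unique continuation theorem, followed by an open--closed argument. The paper merely executes the difference step in divergence form (following Simon--Wickramasekera) so as to match the exact hypotheses of Kazdan's Theorem 1.8, and records the dimension-dependent choice of the Osgood function ($f(r)=r$ for $m\geq 3$, $f(r)=r\log(2R_0/r)$ for $m=2$), which your sketch glosses over but which does not change the argument.
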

\begin{proof} Let $B^l_1(0)$ be the unit ball centered at the origin in $\R^l$.  W.l.o.g.~we can work on the set $\Omega:=B^m_1(0) \times B^k_1(0)$ with a metric $h_{ij}$ and the minimal surface $\Sigma$ is given as the graph of a smooth function $u:B^m_1(0) \ra B^k_1(0)$. We denote with $g$ the induced metric on $\Sigma$ and recall the formula
$$ \Delta^{g} v = \text{tr}_{T\Sigma}(\text{Hess}^h(v)) + d(\mathbf{H}) $$
for any ambient function $v:\Omega \ra \R$ where  $\mathbf{H}$ is the mean curvature vector of $\Sigma$. If $\Sigma$ is minimal we obtain the equations
\begin{equation}\label{eq:A.1}
 \Delta^{g} x_l = \text{tr}_{T\Sigma}(\text{Hess}^h(x_l)) 
\end{equation}
 where $x_l$ for $l=1,  \cdots, m+k$ are the standard Euclidean coordinates on $\Omega$. Note that to characterise the minimality of $\Sigma$ it is sufficient to have the above equations fulfilled for $l=m+1, \cdots, m+k$. In the coordinates given by $u = (u_1, \cdots, u_k)$ the right hand side can be written as
 $$f_l(x,u,Du):= - \sum_{i,j =1}^m g^{ij} \left(\bar{\Gamma}_{ij}^l+  2\sum_{r=1}^k \bar{\Gamma}_{m+r\, j}^l \frac{\partial u_r}{ \partial x_i} + \sum_{r,s=1}^k \bar{\Gamma}_{m+r\, m+s}^l \frac{\partial u_r}{\partial x_i} \frac{\partial u_r}{\partial x_j}\right) $$
 where $\bar{\Gamma}_{ij}^k$ are the Cristoffel symbols of $h$, evaluated at the point $(x,u(x))$. Thus the above equations read
 \begin{equation*}
  \frac{\partial}{\partial x_i}\left( \sqrt{\det(g)}g^{ij}\frac{\partial u}{\partial x_j}\right) = \tilde{f}
 \end{equation*}
where $\tilde{f} = (\sqrt{\det(g)} f_{m+1}, \cdots, \sqrt{\det(g)} f_{m+k})$  and
\begin{equation*}
  \frac{\partial}{\partial x_i}\left( \sqrt{\det(g)}g^{ij}\right) = \bar{f}_j
 \end{equation*}
for $j =1, \cdots, m$ and $\bar{f}_j =  \sqrt{\det(g)} f_{j}$. Defining the metric $G^{ij} = \sqrt{\det(g)}g^{ij}$ we can rewrite these equations in the form
\begin{align}
 \frac{\partial}{\partial x_i}&\left( G^{ij}\frac{\partial u}{\partial x_j}\right) = \tilde{f} \label{eq:A.2}\\
 \frac{\partial}{\partial x_i}&\left( G^{ij}\right) = \bar{f}_j\qquad \text{for } j =1, \cdots, m\ .\label{eq:A.3}
\end{align}
We writing $G^{ij}(x,u,Du) = G^{ij}[u]$ and similarly $\tilde{f}(x,u,Du) = \tilde{f}[u], \bar{f}_j(x,u,Du) = \bar{f}_j[u]$. Following \cite[\S 8]{SimonWickramasekera16}, we can write the difference of \eqref{eq:A.2} for two solutions $u^1,u^2$ of \eqref{eq:A.1} as
\begin{equation*}
\begin{split}
\tilde{f}[u^1] &- \tilde{f}[u^2] = \frac{\partial}{\partial x_i}\left(G^{ij}[u^1]\frac{\partial u^1}{\partial x_j} - G^{ij}[u^2]\frac{\partial u^2}{\partial x_j} \right) =\\
 &=  
 \frac{\partial}{\partial x_i}\left(\left(G^{ij}[u^1]+G^{ij}[u^2]\right) \frac{\partial}{\partial x_j}\frac{u^1-u^2}{2}  + \left(G^{ij}[u^1] - G^{ij}[u^2]\right)\frac{\partial}{\partial x_j} \frac{u^1+ u^2}{2}\right)\\
 &=  \frac{\partial}{\partial x_i}\left(\bar{G}^{ij} \frac{\partial v}{\partial x_j}  + \left(G^{ij}[u^1] - G^{ij}[u^2]\right)\frac{\partial  \bar{u}}{\partial x_j} \right)\\
 &=  \frac{\partial}{\partial x_i}\left(\bar{G}^{ij} \frac{\partial v}{\partial x_j}\right)  + \left(\tilde{f}_j[u^1] - \tilde{f}_j[u^2]\right)\frac{\partial  \bar{u}}{\partial x_j} +  \left(G^{ij}[u^1] - G^{ij}[u^2]\right)\frac{\partial^2  \bar{u}}{\partial x_i x_j} \ ,
\end{split}
\end{equation*}
where we introduced $v = u^1-u^2, \bar{u}= (u^1-u^2)/2$ and $\bar{G}^{ij} = \left(G^{ij}[u^1] + \bar{G}^{ij}[u^2]\right)/2$ and applied \eqref{eq:A.3}. Assuming that $u^1,u^2$ are bounded in $C^{1,1}$ and using standard interpolation between $u^1$ and $u^2$, this implies that
\begin{equation}\label{eq:A.4}
 \left|  \frac{\partial}{\partial x_i}\left(\bar{G}^{ij} \frac{\partial v}{\partial x_j}\right) \right| \leq C (|v| + |Dv|)
\end{equation}
for some $C \geq 0$. We claim that we can now apply the result of Kazdan, \cite[Theorem 1.8]{Kazdan88} to get the desired result: Note that \cite[(1.9)]{Kazdan88} implies that the operator on the left hand side of \eqref{eq:A.4} is of a form such that \cite[Theorem 1.8]{Kazdan88} is applicable. The estimate \eqref{eq:A.4} implies that  \cite[(1.4)]{Kazdan88} holds with $f(r) =r$ for $m\geq 3$ and $f(r) = r \log(2R_0/r)$ for $m=2$. 

\end{proof}
\end{appendix}

\def\cprime{$'$} \def\weg#1{}
\providecommand{\bysame}{\leavevmode\hbox to3em{\hrulefill}\thinspace}
\providecommand{\MR}{\relax\ifhmode\unskip\space\fi MR }
\providecommand{\MRhref}[2]{%
  \href{http://www.ams.org/mathscinet-getitem?mr=#1}{#2}
}
\providecommand{\href}[2]{#2}

\end{document}